\def\r{\mathcal R}
\def\R{\mathbb R}
\def\C{\mathbb C} 
\def\F{\mathbb F}
\def\z{{\bf z}}
\def\w{{\bf w}}
\def\V{\mathbb V}
\def\F{\mathbb F}
\def\X{\mathbb X}
\def\A{\mathbb A}
\def \ch{{\bf H}_{\C}}
\def \h{{\bf H}_{\F}}
\def\R{\mathbb R}
\def\V{\mathbb V}
\def\g{\mathcal G}
\newcommand{\SL}{\mathrm{SL}}
\newcommand{\SU}{\mathrm{SU}}
\def \a{\mathcal A}
\def\t{\mathcal T}
\def\P{\mathbb P}
\def \ab {{\bf a}_B}
\def \aa {{\bf a}_A}
\def \ra {{\bf r}_A}
\def \rb {{\bf r}_B}
\def \xb{{\bf x}_B}
\def \yb{{\bf y}_B}
\def \a {{\bf a}}
\def \r  {{\bf r}}
\def \x  {{\bf x}}
\def \y  {{\bf y}}
\newtheorem{theorem}{Theorem}[section]
\newtheorem{lemma}[theorem]{Lemma}
\newtheorem{prop}[theorem]{Proposition}
\theoremstyle{definition}
\theoremstyle{remark}
\numberwithin{equation}{section}
\theoremstyle{plain}
\newtheorem{corollary}[theorem]{Corollary}
\numberwithin{equation}{section}
\newcommand{\secref}[1]{Section~\ref{#1}}
\newcommand{\thmref}[1]{Theorem~\ref{#1}}
\newcommand{\lemref}[1]{Lemma~\ref{#1}}
\newcommand{\propref}[1]{Proposition~\ref{#1}}
\newcommand{\corref}[1]{Corollary~\ref{#1}}
\newcommand{\eqnref}[1]{~{\textrm(\ref{#1})}}
\begin{document}

\title[ Fenchel-Nielsen Coordinates of Surface Group Representations]{ On Fenchel-Nielsen Coordinates of Surface Group Representations into $\SU(3,1)$} 
\author[Krishnendu Gongopadhyay  \and Shiv Parsad]{Krishnendu Gongopadhyay \and
 Shiv Parsad}
\address{Indian Institute of Science Education and Research (IISER) Mohali,
 Knowledge City,  Sector 81, S.A.S. Nagar 140306, Punjab, India}
\email{krishnendug@gmail.com, krishnendu@iisermohali.ac.in}
\address{Indian Institute of Science Education and Research (IISER) Bhopal, 
Bhopal Bypass Road, Bhauri
Bhopal 462 066
Madhya Pradesh, India} 
\email{parsad.shiv@gmail.com}
 \thanks{Gongopadhyay acknowledges partial support from NBHM India, Grant NBHM/R.P.
7/2013/Fresh/992.}
\date{\today}
 \subjclass[2010]{Primary 20H10; Secondary 30F40, 15B57, 51M10}
\keywords{ complex hyperbolic space, surface group representations,  traces}
\begin{abstract}
Let $\Sigma_g$ be a compact, connected, orientable surface of genus $g \geq 2$. We ask for a  parametrization of the discrete, faithful, totally loxodromic representations in the deformation space  ${\rm  Hom}(\pi_1(\Sigma_g), {\rm SU}(3,1))/{\rm SU}(3,1)$. We show that such a representation, under some hypothesis, can be determined by $30g-30$ real parameters. 
\end{abstract}
\maketitle

\section{Introduction}

Let $\Sigma_g$ be a closed, connected, orientable surface of genus $g \geq 2$. Let $\pi_1(\Sigma_g)$ be the  fundamental group of $\Sigma_g$. The classical Teichm\"uller space can be considered as the space of discrete, faithful, totally loxodromic  representations of $\pi_1(\Sigma_g)$ into ${\rm SL}(2,\R)$ up to conjugacy.  To construct the Fenchel-Nielsen coordinates of the classical Teichm\"uller space, one starts by specifying a curve system of $3g-3$ simple closed curves on $\Sigma_g$. The complement of such curve system decomposes the surface into $2g-2$ three-holed spheres. A three-holed sphere is also known as a pair of pants in the literature. The Fenchel-Nielsen coordinates provide the degrees of freedom that are needed to glue these several pairs of pants in order to construct a hyperbolic surface. Given a discrete, faithful, totally loxodromic representation $\rho: \pi_1(\Sigma_g) \to {\rm SL}(2, \R)$, a pair of pants in the above pants-decomposition of the surface corresponds to a two-generator subgroup $\langle A, B \rangle$  generated by 
loxodromic elements $A$ and $B$ such that $AB$ is also loxodromic. The loxodromic elements $A$, $B$, $B^{-1} A^{-1}$ correspond to the boundary components of the pair of pants. Such a group is called a $(0,3)$ group in the literature.  It follows from a classical work of Fricke \cite{f} and Vogt \cite{v} that a $(0,3)$ group in ${\rm SL}(2, \R)$ is completely determined by the traces of the generators and their product. For an up-to-date exposition of this work, see Goldman \cite{gold2}. The gluing of the pairs of pants correspond to gluing of these $(0,3)$ groups. In the gluing process there are traces of these loxodromics, along with rotations of the peripheral or the boundary curves during the gluing. These rotation angles are called twist-bend parameters. The traces of the loxodromics along with the twist-bend parameters determine a representation $\rho: \pi_1(\Sigma_g) \to {\rm SL}(2, \R)$ completely up to conjugation.

\medskip  Let $\ch^n$ be the $n$ dimensional complex hyperbolic space. The group ${\rm SU}(n,1)$ acts as the holomorphic isometry group of $\ch^n$. A discrete, faithful, geometrically finite and totally loxodromic representation of a surface group into ${\rm SU}(n,1)$ is called a \emph{complex hyperbolic quasi-Fuchsian representation}. The geometry of these representations is mostly unknown. In the last two decades, there have been some understanding of these representations when the target group is ${\rm SU}(2,1)$, and a conjectural picture of the representation space has been evolved, see Parker-Platis \cite{pp} and Schwartz \cite{sc} for surveys. However, when the target group is ${\rm SU}(n,1)$, $n \geq 3$, not much is known. 

Parker and Platis \cite{pp} generalized the Fenchel-Nielsen coordinates for discrete, faithful, geometrically finite and totally loxodromic representations of $\pi_1(\Sigma_g)$ into the group ${\rm SU}(2,1)$. As a starting point of their Fenchel-Nielsen coordinate system,  Parker and Platis \cite[Theorem 7.1]{pp} proved a generalization of the result of Fricke and Vogt for 
 Zariski-dense free subgroups of ${\rm SU}(2,1)$ generated by two loxodromic elements. Parker and Platis followed an approach that uses traces of the generators and a point on the cross-ratio variety. In another approach, it follows from the work of Lawton \cite{law}, Khoi \cite{khoi}, Wen \cite{wen}  and Will \cite{will1, will2} that a two-generator free Zariski dense subgroup of ${\rm SU}(2, 1)$ is determined by traces of the generators and the traces of three more compositions of the generators. For a survey of these results, see \cite{parker, will3}. 

There have been generalization of Fenchel-Nielsen coordinates in three dimensional real hyperbolic geometry and projective geometry as well. Tan \cite{tan} and Kourouniotis \cite{ko} constructed Fenchel-Nielsen coordinates for quasi-Fuchsian representations of $\pi_1(\Sigma_g)$ into ${\rm SL}(2, \C)$. Goldman \cite{go3} generalized Fenchel-Nielsen coordinates on the space of convex real projective structures on $\Sigma_g$. Recently, Strubel \cite{stru} has developed Fenchel-Nielsen coordinates for
representations of $\pi_1(\Sigma_g)$ into ${\rm Sp}(2n,\R)$ with maximal Toledo invariant. 

In this work we intend to generalize the work of Parker and Platis \cite{pp} for representations of $\pi_1(\Sigma_g)$ into ${\rm SU}(3,1)$. The starting point, as in the classical case, is to parametrize $(0,3)$ subgroups of ${\rm SU}(3,1)$. However,  for two-generator subgroups in ${\rm SU}(3,1)$,  traces  and cross-ratios of the generators are not sufficient to determine the subgroup up to conjugacy. For the determination of $(0, 3)$ subgroups in ${\rm SU}(3,1)$, one needs to look for more conjugacy invariants that should specify a pair of generators. For this purpose, we use new invariants  which are generalizations of Goldman's eta invariants. As we shall see, for `generic' representations, called \emph{tame representations}, our invariants fit together nicely and they provide Fenchel-Nielsen type coordinates to specify such representations up to conjugacy. 

\medskip 
 Let $\C^{3,1}$ be the vector space $\C^4$ equipped with a non-degenerate Hermitian form $\langle.,. \rangle$ of signature $(3,1)$.  Then $\ch^3$ is the projectivization of the set of  vectors $v$ such that $\langle v, v \rangle<0$.  The boundary $\partial \ch^3$ is the projectivization of the null vectors. Following Goldman \cite{gold} recall that a \emph{$k$-dimensional complex totally geodesic subspace} of $\ch^3$ or a \emph{$\C^k$-plane} is the projectivization of a copy of $\C^{k,1}$ in $\C^{3,1}$, $k=1,2$. A  $\C^1$-plane is simply called a \emph{complex geodesic}.  
 A \emph{$\C^k$-chain} is the boundary of a $\C^k$-plane in $\ch^3$; a $\C^1$-chain is simply called a \emph{chain}. 
A positive vector $c$ (i.e., $\langle c, c \rangle >0$)    is polar to a $\C^2$-plane $C$ if the lift of $C$ in $\C^{3,1}$ is the orthogonal complement of $c$. The positive vector $c$ is polar to a $\C^2$-chain $L$ if $L$ is the boundary of a $\C^2$-plane $C$ that is polar to $c$.

For four distinct points $z_1$, $z_2$, $z_3$ and $z_4$ in $\partial \ch^3$, the \emph{Kor\'anyi-Reimann cross-ratio} is defined by: 
\begin{equation}\label{ecr} \X(z_1, z_2, z_3, z_4)=\frac{\langle {\bf z}_3, {\bf z}_1 \rangle \langle {\bf z}_4, \bf z_2 \rangle} { \langle {\bf z}_4, {\bf z}_1\rangle \langle   {\bf z}_3, {\bf z}_2 \rangle},\end{equation}
where ${\bf z_i}$ is a lift of $z_i$ in $\C^{3,1}$. 
 For more details on cross-ratios, see \cite{gold}. We extend the above definition to define invariants for the ``generic case" that includes three null vectors and one positive vector in $\C^{3,1}$. For a loxodromic element $A$, we denote by ${\bf a}_A$, ${\bf r}_A$ the null eigenvectors of $A$  corresponding to the fixed points and let ${ \bf x}_A$ and ${ \bf y}_A$ correspond to the positive eigenvectors of $A$.

 Let $A$, $B$ be two loxodromic elements in ${\rm SU}(3,1)$. Then corresponding to the fixed points of $A$ and $B$, there are three cross-ratios $\X_k(A, B)$, $k=1,2,3$, that determine the four points uniquely. All such cross-ratios corresponding to pair of loxodromic elements form a variety, called the \emph{cross-ratio variety}. It follows that every point in this variety has five real degrees of freedom, see \propref{crp}.    
The pair $(A, B)$ is called \emph{non-singular}  if 
\begin{itemize} 
\item[(i)] $A$ and $B$ have no common fixed point.  
\item[(ii)]The fixed points of $A$ and $B$ do not lie on a common $\C^2$-chain.
\item[(iii)]  The fixed-point set of $A$ is disjoint from at least one of the $\C^2$-chains polar to the positive eigenvectors of $B$ and, the fixed-point set of $B$ is disjoint from at least one of the $\C^2$-chains polar to the positive eigenvectors of $A$. 
\end{itemize} 
Condition (iii) can also be stated in terms of the Goldman's invariants introduced in \secref{redu}. It is equivalent to the condition that for some $i, j \in \{1,2\}$, 
$\eta_i(A, B) \neq 0$ and $\nu_j (A, B) \neq 0$, see \secref{mnth}. 

The free subgroup $\langle A, B \rangle$ is \emph{ non-singular} if the generating pair is non-singular.  
In particular,  a non-singular subgroup is Zariski-dense in ${\rm SU}(3,1)$. To a non-singular pair $(A, B)$, we associate complex numbers $\alpha_i(A, B)$ and $\beta_j(A, B)$  given by the following: 
$$\alpha_1(A,B)=\X(\r_A,~\a_A,~\x_B,~\a_B), ~  ~\alpha_2(A,B)=\X(\r_A,~\a_A,~\y_B,~\a_B), $$
$$\beta_1(A,B)=\X(\r_B,~\a_B,~\x_A,~\a_A), ~ ~ \beta_2(A, B)=\X(\r_B,~\a_B,~\y_A,~\a_A),$$
where $\X(\x_1, ~\x_2,~ \x_3,~ \x_4)$ is given by \eqnref{ecr}. 
We shall refer to $\alpha_1(A, B)$ or $\alpha_2(A, B)$ by \emph{$\alpha$-invariant} and, $\beta_1(A, B)$ or $\beta_2(A, B)$ by \emph{$\beta$-invariant}. Condition (iii) in the above definition ensures that  there exist at least one non-zero $\alpha$-invariant and one non-zero $\beta$-invariant for a non-singular subgroup.  We prove the following. 
\begin{theorem} \label{mainth} Let $ A$ and $B $ be two loxodromic elements in 
${\rm SU }(3,1)$ such that they generate a non-singular subgroup $\langle A, B \rangle$. Then 
$\langle A, B \rangle$ is determined up to conjugacy by the following parameters: 

\medskip  $tr(A),~tr(B),~\sigma(A),~\sigma(B)$, 
$~\X_{k}(A, B),~k=1,2,3$,  any one of the non-zero $\alpha$-invariants and any one of the non-zero $\beta$-invariants,  where $tr(A)=\hbox{trace}(A)$, $\sigma(A)=\frac{1}{2}(tr^2(A)-tr(A^2))$.  
  \end{theorem}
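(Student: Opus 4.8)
The plan is to prove the equivalent reconstruction statement: if two non-singular pairs $(A,B)$ and $(A',B')$ have the same invariants listed above, then they are conjugate in ${\rm SU}(3,1)$. I would do this by putting each pair into a common normal form, built on the fact that a loxodromic element of ${\rm SU}(3,1)$ is diagonalized by the basis consisting of its two null eigenvectors ($\aa,\ra$ for $A$) and its two positive eigenvectors ($\xa,\ya$). First I would recover the eigenvalues. Because $A$ preserves the Hermitian form, its characteristic polynomial $t^4-c_1t^3+c_2t^2-c_3t+c_4$ is self-inversive: its root multiset is invariant under $e\mapsto\overline{e}^{\,-1}$, so $c_4=\det A=1$ and $c_3=\overline{c_1}$. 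Hence $c_1=tr(A)$ and $c_2=\sigma(A)$ already determine the eigenvalue multiset $\{\lambda,\overline\lambda^{-1},u_1,u_2\}$, with $|\lambda|>1$ attached to $\aa$, $\overline\lambda^{-1}$ to $\ra$, and the unit-modulus $u_1,u_2$ to $\xa,\ya$; likewise for $B$.

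Next I would fix the four boundary fixed points. As recalled above, and by \propref{crp}, the three cross-ratios $\X_k(A,B)$ determine $\aa,\ra,\ab,\rb$ up to ${\rm SU}(3,1)$, so I may conjugate each pair so that these four null lifts form one fixed standard quadruple. Condition (i) makes them distinct and condition (ii) --- that they do not lie on a common $\C^2$-chain --- makes their lifts linearly independent, hence a basis of $\C^{3,1}$. Any element of ${\rm SU}(3,1)$ fixing all four of these lines is then diagonal in this basis, and preserving the form while fixing four \emph{null} lines forces, via the Gram matrix whose off-diagonal entries are nonzero in general position, the diagonal entries to be equal and a fourth root of unity. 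Thus the residual stabilizer is the center of ${\rm SU}(3,1)$, which acts trivially by conjugation, and after the normalization $A$ and $B$ are pinned down up to the center once their positive eigenvectors are known.

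The decisive step --- and the one I expect to be the main obstacle --- is recovering the positive eigenvectors, which in contrast to the null ones are not among the fixed points. The vectors $\xb,\yb$ lie in the positive-definite plane $P_B=\{\ab,\rb\}^{\perp}$ and are orthogonal to each other when $u_1\neq u_2$. A single non-zero $\alpha$-invariant, say $\alpha_1(A,B)=\X(\ra,\aa,\xb,\ab)$, pins down the ratio $\langle\xb,\ra\rangle/\langle\xb,\aa\rangle$, since the factors $\langle\ab,\aa\rangle$ and $\langle\ab,\ra\rangle$ are read off the normalized frame; this is one linear condition on $\xb$, and together with the orthogonality relations $\langle\xb,\ab\rangle=0=\langle\xb,\rb\rangle$ it gives a system of three linear equations in $\C^{3,1}$ whose solution line is unique. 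Establishing that this $3\times 4$ system has rank three is precisely where the non-singularity hypotheses do their work: condition (ii) makes the four null lifts a basis, and the non-vanishing of $\alpha_1$ from condition (iii) keeps the cross-ratio hyperplane transverse to $P_B$. Once $\xb$ is found, $\yb$ is forced as its orthocomplement inside $P_B$, so one $\alpha$-invariant already yields both positive eigenvectors of $B$; symmetrically one non-zero $\beta$-invariant yields $\xa,\ya$.

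Finally I would assemble the maps. In the normalized frame we now know, for each of $A$ and $B$, a complete eigenbasis together with the matching eigenvalues --- the pairing of the recovered eigendirections with $u_1,u_2$ being dictated by the convention implicit in the labels $\xb,\yb$, with the coincident case $u_1=u_2$ vacuous since then $B$ is scalar on $P_B$ --- so $A$ and $B$ are determined as linear maps. Two non-singular pairs with identical invariants therefore coincide after normalization, which proves conjugacy. I expect the genuinely delicate points to be the explicit solution and rank verification of the linear system in the third step, and checking that the transversality statements translate exactly into conditions (i)--(iii); conceptually the new content beyond the ${\rm SU}(2,1)$ case of Parker and Platis is that there $P_A,P_B$ are complex lines and the positive eigenvector is automatic, whereas here they are planes and the $\alpha$- and $\beta$-invariants supply precisely the missing $\mathbb{CP}^1$ of eigenvector data.
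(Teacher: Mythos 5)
Your proposal is correct and takes essentially the same route as the paper's own proof: eigenvalues from the self-inversive characteristic polynomial (Lemma~\ref{lile1}, Corollary~\ref{licor1}), normalization of the four fixed points via the three cross-ratios (Proposition~\ref{crvpr3}, which is the citation you want rather than Proposition~\ref{crp}; its non-real cross-ratio hypothesis is left implicit in your sketch exactly as in the paper's Lemma~\ref{lsprp1}), and then recovery of the positive eigendirections from one non-zero $\alpha$- and one non-zero $\beta$-invariant. Your rank-three linear system for $\xb$ is the paper's argument in dual form: in Lemma~\ref{lsprp1} the difference $v=\xb-\lambda C^{-1}(\xb')$ of two candidate solutions is shown orthogonal to $\aa,\ra,\ab,\rb$, and condition (ii) of non-singularity rules out $v$ being a positive vector polar to a common $\C^2$-chain, which is precisely your statement that the four null lifts form a basis of $\C^{3,1}$ and hence the solution line is unique.
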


 In the parameter space associated to $\langle A, B \rangle$, the parameters ${\rm tr}(A), ~ {\rm tr}(B), ~\alpha$ and $\beta$ are complex numbers, 
${\sigma}(A), ~ {\sigma}(B)$ are real numbers,  
$({\mathbb X}_1, \X_2, \X_3)$ live on the cross-ratio variety that is $5$ real dimensional. Thus we need a total of
$( 4\times 2 + 2\times1 + 5) = 15$ real parameters to specify $\langle A, B\rangle$ up to conjugacy. We call these parameters the \emph{Fenchel-Nielsen coordinates} of $\langle A, B \rangle$.  

\medskip Suppose ${\rm F}_2$ is a free group of rank two. Let ${\rm F}_2=\langle m, n \rangle$. Consider the ${\rm SU}(3,1)$-deformation space of ${\rm F}_2$:  $\mathcal M= {\rm Hom}({\rm F}_2, {\rm SU}(3,1))/{\rm SU}(3,1)$. Let ${\mathcal R}^{lox}$ be the subset of   
${\rm Hom}({\rm F}_2, {\rm SU}(3,1))$ defined by
$${\mathcal R}^{lox}=\{\rho: {\rm F}_2 \to {\rm SU}(3,1) \ | \ \rho(m) \hbox{ and  } \rho(n) \hbox{ are loxodromics}\}.$$
For $i, j \in \{1, 2\}$, let 
$${{\mathcal R}_{ij}^{lox}}=\{\rho \in {\mathcal R}^{lox} \ | \ (\rho(m), \rho(n)) \hbox{ is non-singular s.t. }
 \alpha_i(\rho(m), \rho(n)) \neq 0  \hbox{ and }   \beta_j(\rho(m), \rho(n))\neq 0\}.$$
Let ${{\mathcal R}_{o}^{lox}}=\{\rho \in {\mathcal R}^{lox} \ | \ (\rho(m), \rho(n)) \hbox{ is non-singular}\}$. Clearly,   
$${{\mathcal R}_{o}^{lox}}={{\mathcal R}_{11}^{lox}}\cup {{\mathcal R}_{12}^{lox}}\cup {{\mathcal R}_{21}^{lox}}\cup{{\mathcal R}_{22}^{lox}}.$$
Let ${\mathcal M}_{ij}^{lox} = {\mathcal R}_{ij}^{lox}/{\rm SU}(3,1)$.  Then  \thmref{mainth} classifies the representations  of ${\mathcal M}_{ij}^{lox}$. 

\begin{corollary} 
Let $\rho: {\rm F}_2 \to {\rm SU}(3,1)$ be a representation such that $\rho(m), ~ \rho(n)$ are loxodromics and generate a non-singular subgroup of ${\rm SU}(3,1)$.  Then for some $i, j \in \{1,2\}$, there exist two non-zero complex parameters $\alpha_i$ and $\beta_j$ such that these,  along with the coefficients of the characteristic polynomials  of $\rho(m)$ and $\rho(n)$,  and,  a point on the cross-ratio variety, completely determine $\rho$ up to conjugacy.  The real dimension of the parameter space associated to ${\mathcal M}_{ij}^{lox}$ is 15. 
\end{corollary}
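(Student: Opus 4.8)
The plan is to deduce this statement directly from \thmref{mainth}, reading it as a reformulation in the language of the deformation space $\mathcal M$. The first step is to observe that, because $F_2=\langle m,n\rangle$ is free of rank two, a homomorphism $\rho:F_2\to{\rm SU}(3,1)$ is uniquely determined by the ordered pair $(A,B):=(\rho(m),\rho(n))$, and every such pair arises from exactly one $\rho$. Moreover $\rho'=g\rho g^{-1}$ for some $g\in{\rm SU}(3,1)$ if and only if $(A',B')=(gAg^{-1},gBg^{-1})$. Hence the class of $\rho$ in $\mathcal M$ is precisely the simultaneous-conjugacy class of the marked pair $(A,B)$, and all of the quantities $tr(A),tr(B),\sigma(A),\sigma(B)$, $\X_k(A,B)$, $\alpha_i(A,B)$, $\beta_j(A,B)$ are invariants of this class. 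Since the hypothesis says $A,B$ are loxodromic and $\langle A,B\rangle$ is non-singular, Condition (iii) in the definition of non-singular guarantees that for some $i,j\in\{1,2\}$ one has $\alpha_i(A,B)\neq0$ and $\beta_j(A,B)\neq0$; fix such $i,j$.

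With these choices in hand, \thmref{mainth} applies verbatim and shows that the marked pair $(A,B)$, hence the conjugacy class of $\rho$, is determined by $tr(A),tr(B),\sigma(A),\sigma(B)$, the triple $(\X_1,\X_2,\X_3)$, and the two nonzero invariants $\alpha_i,\beta_j$. To match the phrasing of the corollary, I would then identify the trace data with the characteristic polynomials. For $A\in{\rm SU}(3,1)$ the relation $A^*HA=H$ gives $A^*=HA^{-1}H^{-1}$, so $A^*$ is conjugate to $A^{-1}$; comparing eigenvalues shows the eigenvalue multiset is invariant under $\lambda\mapsto 1/\bar\lambda$. Together with $\det A=1$ this makes the characteristic polynomial self-inversive, of the shape $\lambda^4-tr(A)\lambda^3+\sigma(A)\lambda^2-\overline{tr(A)}\,\lambda+1$, where $\sigma(A)=\tfrac12(tr^2(A)-tr(A^2))$ is the second elementary symmetric function of the eigenvalues and comes out real. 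Thus the coefficients of the characteristic polynomial of $A$ carry exactly the same information as the pair $(tr(A),\sigma(A))\in\C\times\R$, and likewise for $B$; substituting this identification completes the classification assertion.

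Finally I would carry out the dimension count. The pair $(tr(A),\sigma(A))$ contributes $2+1=3$ real parameters, and similarly for $B$, for a subtotal of $6$; the point $(\X_1,\X_2,\X_3)$ ranges over the cross-ratio variety, which has real dimension $5$ by \propref{crp}; and the two nonzero complex invariants $\alpha_i,\beta_j$ contribute $2+2=4$. Summing gives $6+5+4=15$, the real dimension of the parameter space for $\mathcal M_{ij}^{lox}$.

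The genuine mathematical content lies in \thmref{mainth}, which I am permitted to assume; for the corollary the only points needing care are the two bookkeeping reductions. The first is the marked-pair interpretation: one must confirm that the invariants used are genuinely functions of the ordered pair and transform correctly under simultaneous conjugacy, so that ``the subgroup is determined up to conjugacy'' really yields ``$\rho$ is determined up to conjugacy'' with no ambiguity in the marking. The second, which I expect to be the main (though still routine) obstacle, is verifying the precise self-inversive shape of the characteristic polynomial of an element of ${\rm SU}(3,1)$, since it is exactly this that collapses the four complex coefficients down to the single complex trace together with the single real invariant $\sigma$, and hence pins the dimension count at $15$.
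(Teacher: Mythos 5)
Your proposal is correct and follows essentially the same route as the paper: the corollary is read off directly from Theorem~\ref{mainth} (whose proof in fact yields simultaneous conjugacy of the ordered pair $(A,B)$, which settles your marking concern), with the identification of $(tr(A),\sigma(A))$ with the characteristic polynomial coefficients already supplied by Lemma~\ref{lile1}, and the $15$-dimensional count matching the paper's $(4\times 2+2\times 1+5)$ tally. Your re-derivation of the self-inversive polynomial via $A^{*}=HA^{-1}H^{-1}$ is a correct substitute for the paper's citation of \cite{gp}, but introduces nothing structurally different.
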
 

Let ${\mathcal M}_o^{lox}={{\mathcal R}_{o}^{lox}}/{\rm SU}(3,1)$. It follows that ${\mathcal M}_o^{lox}$ has been covered by four coordinate patches ${\mathcal M}_{ij}^{lox}$ which are parametrized by a subset in $\R^{15}$.  On the intersection of any two of these coordinate patches, we have a transition map that is induced by a linear transformation of $\C^{3,1}$. For example, on ${{\mathcal  M}_{11}^{lox}}\cap {{\mathcal M}_{12}^{lox}}$, 
this transition map is induced by the complex linear transformation that fixes $\aa, \ra, \ab$ and rotates $\xb$ to $\yb$.

\medskip  Let $\mathcal C= \{\gamma_j\}$, $j=1,2,\ldots, 3g-3$,  be a maximal family of simple closed curves on $\Sigma_g$ such that they are neither homotopically equivalent to each-other, nor homotopically trivial.  We may assume that each curve in $\mathcal C$ is actually a geodesic in its homotopy class, and hence, the homotopy type of the curves can be considered as an element in $\pi_1(\Sigma_g)$. We also assume that our curve system is \emph{simple}, i.e., there are $g$ of the curves $\gamma_j$, that correspond to two boundary components of the same three-holed sphere.  We consider discrete, faithful representations $\rho: \pi_1(\Sigma_g) \to {\rm SU}(3,1)$ such that the $3g-3$ group elements $\rho(\gamma_j)$ are loxodromics. Then each pair of pants in the complement of $\mathcal C$ gives rise to a $(0, 3)$ subgroup of ${\rm SU}(3,1)$. The three boundary curves in a pair of pants is represented by $A$, $B$ and 
$B^{-1} A^{-1}$ respectively. For this reason, $A$, $B$ and $B^{-1} A^{-1}$ are called \emph{peripheral elements} of the two-generator subgroup $\langle A, B \rangle$. 

 A discrete, faithful, totally loxodromic representation $\rho: \pi_1(\Sigma_g) \to {\rm SU}(3,1)$ is called \emph{tame} if each of the $(0, 3)$ groups obtained from the given pant decomposition, is non-singular in ${\rm SU}(3,1)$. We aim to describe $30g-30$ parameters that specify tame representations up to conjugacy. 

\begin{theorem}\label{mth2} 

Let $\Sigma_g$ be a closed surface of genus $g$ with a simple curve system $\mathcal C= \{\gamma_j\}$, $j=1,2,\ldots, 3g-3$. Let $\rho: \pi_1(\Sigma_g) \to {\rm SU}(3,1)$ be a tame representation of the surface group $\pi_1(\Sigma_g)$ into ${\rm SU}(3,1)$. Then we need $30g-30$ real parameters to specify $\rho$ in the deformation space  ${\rm  Hom}(\pi_1(\Sigma_g), {\rm SU}(3,1))/{\rm SU}(3,1)$.  

Specifically, these coordinates are $4g-4$ complex traces, $4g-4$ $\sigma$-invariants, $2g-2$ points on the  $5$ (real) dimensional cross-ratio variety, $2g-2$ of the $\alpha$ invariants, $2g-2$ of the $\beta$-invariants and, $3g-3$ twist-bend parameters subject to $3g-3$ constraints corresponding to the compatibility conditions. 
\end{theorem}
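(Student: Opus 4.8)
The plan is to reconstruct $\rho$ up to conjugacy by assembling the $(0,3)$ groups produced by the pants decomposition and reading off the asserted coordinates from this assembly.

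First I would fix the combinatorics. The curve system $\mathcal C$ cuts $\Sigma_g$ into $2g-2$ pairs of pants $P_1,\ldots,P_{2g-2}$, and each inclusion $P_i\hookrightarrow\Sigma_g$ carries $\pi_1(P_i)\cong F_2$ to a two-generator subgroup $\langle A_i,B_i\rangle=\rho(\pi_1(P_i))$, whose peripheral elements $A_i$, $B_i$, $B_i^{-1}A_i^{-1}$ represent the three boundary curves. Since $\rho$ is tame, every such $(0,3)$ group is non-singular, so \thmref{mainth} applies pants by pants and supplies the $15$ intrinsic real coordinates of each: the traces $tr(A_i),tr(B_i)$, the invariants $\sigma(A_i),\sigma(B_i)$, a point $(\X_1,\X_2,\X_3)$ on the cross-ratio variety, one $\alpha$-invariant and one $\beta$-invariant. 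Gathering these over all $2g-2$ pants produces precisely the $4g-4$ traces, $4g-4$ $\sigma$-invariants, $2g-2$ cross-ratio points, $2g-2$ $\alpha$-invariants and $2g-2$ $\beta$-invariants of the statement, of total real dimension $8g-8+4g-4+10g-10+4g-4+4g-4=30g-30$; by \thmref{mainth} they determine each pants group up to its own conjugacy in ${\rm SU}(3,1)$.

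Next I would assemble the pieces into a single representation. After placing each $\langle A_i,B_i\rangle$ in a normalized position, I glue neighbouring pants across their shared curves. When a curve $\gamma_j$ is common to pants $P$ and $P'$, the two peripheral elements that represent it belong to the single loxodromic conjugacy class $\rho(\gamma_j)$ and must be identified; the element of ${\rm SU}(3,1)$ effecting this identification is unique only up to the centralizer of $\rho(\gamma_j)$. The residual freedom in that centralizer is exactly what I record as a \emph{twist-bend parameter}, one for each of the $3g-3$ curves; the $g$ self-gluing curves, joining two boundaries of a single pants, contribute in the same way through an {\rm HNN}-type identification rather than an amalgamation. The gluings cannot be chosen independently: requiring that the twist-bend and boundary data close up around the essential loops of the pants-adjacency graph --- equivalently, that the assembled images satisfy the surface relation $\prod_{i=1}^{g}[a_i,b_i]=1$ --- produces the compatibility conditions, which I would organize into $3g-3$ real constraints on the twist-bend parameters. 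Balancing the $3g-3$ twist-bends against these $3g-3$ constraints, together with the boundary data shared by adjacent pants that is already implicit in the per-pants invariants, a careful accounting of real dimensions returns the asserted total $30g-30=(2g-2)\dim{\rm SU}(3,1)$, the dimension of the character variety; conversely, any choice of the listed coordinates obeying the constraints rebuilds the normalized pants groups via \thmref{mainth} and a consistent set of gluing elements, hence a representation $\rho$ well-defined up to the one remaining global conjugation.

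The hard part will be the middle steps, namely the twist-bend and constraint bookkeeping. I would need to isolate the twist-bend as the genuinely free part of the centralizer of each peripheral loxodromic --- separated from the length data already carried by the traces and $\sigma$-invariants --- and then prove that the loop-closing and boundary-matching conditions amount to exactly $3g-3$ independent real constraints, neither more nor fewer, so that the dimension is precisely $30g-30$. This is where tameness is indispensable: non-singularity of every $(0,3)$ group keeps all fixed points, the polar $\C^2$-chains and the cross-ratio data in general position, which forces the centralizers to have their expected dimension, makes the boundary conjugacy classes of adjacent pants meet transversally, and makes the reconstruction a local homeomorphism onto its image. Showing finally that this reconstruction is injective up to conjugacy --- so that the coordinates specify $\rho$ itself and not merely the correct dimension --- is the most delicate point of all.
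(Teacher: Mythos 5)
Your outline follows the paper's architecture --- pants decomposition, \thmref{mainth} applied to each non-singular $(0,3)$ group, twist-bends from the centralizer of each shared peripheral loxodromic, amalgamation for curves joining distinct pants and an HNN-type identification for the $g$ self-gluing curves --- but it leaves unproved exactly the step that carries the content of the theorem. You defer as ``the most delicate point of all'' the injectivity of the coordinates, i.e.\ that equal twist-bend parameters yield conjugate glued groups and, conversely, that conjugate glued groups have equal twist-bends. The paper does not treat this by transversality or general-position arguments; it proves it outright in \propref{tb1} and \propref{tw2}, whose engine is \lemref{tw1}: one measures the twist-bend in a conjugation-invariant way through the auxiliary cross-ratio quantities $\tilde\X_1(\kappa,\psi)=[\a_B,\a_A,\r_A,K(\r_C)]$, $\tilde\X_2(\kappa,\psi)$ and $\tilde\beta_i(\kappa,\psi)$, and shows these pin down $(\kappa,\psi)$; in the handle-closing case \lemref{ch} uses non-singularity to force the conjugating element (which commutes with $A$ and with $BA^{-1}B^{-1}$) to be the identity. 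Without some substitute for these invariants, your reconstruction only counts dimensions; it does not show that the listed coordinates \emph{determine} $\rho$ up to conjugacy, which is what the theorem asserts. Note also that tameness enters the paper only through non-singularity feeding into \thmref{mainth}, \lemref{tw1} and \lemref{ch}, not through any claim that centralizers have expected dimension or that the parametrization is a local homeomorphism --- the theorem is a set-theoretic parametrization statement, not a chart statement.

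Second, you misidentify the compatibility constraints. You organize them as loop-closing conditions around essential loops of the pants-adjacency graph, ``equivalently'' the surface relation $\prod_i [a_i,b_i]=1$, acting as constraints \emph{on the twist-bend parameters}. In the paper the twist-bends are free coordinates; no global closing condition arises, because $\pi_1(\Sigma_g)$ is built as an iterated amalgamated free product and HNN extension over the curve system, so any choice of twist-bends produces a representation. The compatibility conditions are instead local boundary-matching conditions: for each curve $\gamma_j$ the two peripheral elements representing it must be conjugate inverses, which constrains the \emph{trace data} --- the complex trace and the real $\sigma$-invariant, three real conditions per curve --- and this is how the paper's count runs ($g$ handle-closing constraints removing $3g$ real parameters, restored by the $g$ three-dimensional twist-bends, with the amalgamation curves' matching already absorbed into the $30$-parameter count of \propref{tb1}). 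Your ``balancing $3g-3$ twist-bends against $3g-3$ constraints'' is not coherent as stated: each twist-bend $(\kappa,\psi)$ is $3$ real dimensions and each compatibility condition is $3$ real conditions, so the books balance only when both are counted as such; constraining the twist-bends themselves would give the wrong dimension and, more importantly, the wrong picture of where the freedom in the construction lives.
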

The idea to prove the above theorem is similar to the construction of Fenchel-Nielsen coordinates of the classical Teichm\"uller space. The proof follows from \thmref{mainth} combining it with  \propref{tb1} and \propref{tw2} that determine $(0, 4)$ and $(1,1)$ groups respectively.

\medskip This paper is organized as follows.  In \secref{prel},  we recall the notion of the complex hyperbolic space and summarize preliminary results about loxodromic isometries that would be needed later on. In \secref{crai}, we study the Kor\'anyi-Reimann cross-ratios of a quadruple of distinct points on the ideal boundary of $\ch^3$. 
In \secref{redu}, we derive a sufficient condition for the subgroup $\langle A, B \rangle$ to be reducible in terms of the numerical invariants.  We determine the non-singular subgroups $\langle A, B \rangle$ in \secref{mnth}. We prove \thmref{mainth} in this section. In \secref{twb}, we introduce the twist-bend parameters and show that a $(0, 4)$ group  or, an  $(1,1)$ group, is  determined uniquely, up to conjugation, by several invariants introduced in \thmref{mainth}  along with the twist-bend parameters. In particular, we prove  \propref{tb1} and \propref{tw2} in this section.  We prove \thmref{mth2} in \secref{mnth2}.

\section{Preliminaries}\label{prel} 
\subsection{Complex Hyperbolic Space} 
Let $V=\C^{3,1}$ be the complex vector space $\C^4$ equipped with the Hermitian form of signature (3,1) given by $$\langle\z,\w\rangle=\w^{\ast}H\z=z_{1}\bar w_4+z_2\bar w_2+z_3\bar w_3
+z_4\bar w_1,$$
where $\ast$ denotes conjugate transpose. The matrix of the Hermitian form is given by 
\begin{center}
$H=\left[ \begin{array}{cccc}
           0 & 0 & 0 & 1\\
           0 & 1 & 0 & 0\\
 0 & 0 & 1 & 0\\
 1 & 0 & 0 & 0\\
          \end{array}\right].$
\end{center}
If $H^{\prime}$ is any other $4\times4$ Hermitian matrix with signature (3,1), then there is a $4\times4$ matrix $C$ so that $C^{\ast}H^{\prime}C=H$.

We consider the following subspaces of $\C^{3,1}:$
$$V_{-}=\{\z\in\C^{3,1}:\langle\z,\z \rangle<0\}, ~ \V_+=\{\z\in\C^{3,1}:\langle\z,\z \rangle>0\},$$
$$V_{0}=\{\z-\{\bf 0\}\in\C^{3,1}:\langle\z,\z \rangle=0\}.$$
A vector $\z$ in $\C^{3,1}$ is called \emph{positive, negative}  or \emph{null}  depending on whether $\z$ belongs to $V_+$,   $V_-$ or  $V_0$. Let $\P:\C^{3,1}-\{0\}\longrightarrow\C P^3$ be the canonical projection onto complex projective space. The complex hyperbolic space $\ch^3$ is defined to be $\P V_{-}$. The ideal boundary $\partial\ch^3$ is $\P V_{0}$. The canonical projection of a vector $\z$ is given by 
\hbox{$z=\P(\z)=(z_1/z_4,z_2/z_4,z_3/z_4)$}. Therefore we can write $\ch^3=\P(V_{-})$ as
$$\ch^3=\{(w_1,w_2,w_3)\in\C^3 \ : \ 2\Re(w_1)+|w_2|^2+|w_3|^2<0\}.$$  
This gives the Siegel domain model of $\ch^3$. 
There are two distinguished points in $V_{0}$ which we denote by  ${o}$ and $\bf\infty$ given by 
$$\bf{o}=\left[\begin{array}{c}
               0\\0\\0\\1\\
              \end{array}\right],
 \infty=\left[\begin{array}{c}
               1\\0\\0\\0\\
              \end{array}\right].$$\\
Then we can write $\partial\ch^3=\P(V_{0})$ as
$$\partial\ch^3-\infty=\{(z_1,z_2,z_3)\in\C^3:2\Re(z_1)+|z_2|^2+|z_3|^2=0\}.$$\\
 Conversely, given a point $z$ of $\ch^3=\P(V_{-})\subset\C P^3$ we may lift $z=(z_1,z_2,z_3)$ to a point $\z$ in $V_{-}$, called the standard lift of $z$, 
 by writing in non-homogeneous coordinates as
 $$\z=\left[\begin{array}{c}
                z_1\\z_2\\z_3\\1\\
               \end{array}\right].$$
 The Bergman metric in $\ch^3$ is defined in terms of the Hermitian form given by:
$${ds}^2=-\frac{4}{\langle z,z \rangle^2} \det \left[\begin{array}{cc}
                                                    \langle z,z \rangle & \langle dz ,z \rangle\\
                                                    \langle z,dz \rangle & \langle dz,dz \rangle\\ 
                                                   \end{array}\right].$$
If $z$ and $w$ in $\ch^3$ correspond to vectors $\z$ and $\w$ in $V_{-}$, then the Bergman metric is given by the distance $\rho$: 
$$\cosh^2\big(\frac{\rho(z,w)}{2}\big)=\frac{\langle\z,\w\rangle \langle\w,\z\rangle}{\langle\z,\z\rangle \langle\w,\w\rangle}.$$ 
\subsection{Isometries}   
 Let ${\rm U}(3,1)$ be the isometry group of  the Hermitian form $\langle .,.\rangle$. Each matrix $A$ in ${\rm U}(3,1)$ satisfies the relation $A^{-1}=
H^{-1}A^{\ast}H$, where $A^{\ast}$ is the conjugate transpose of $A$. The holomorphic isometry group of  $\ch^3$ is the projective unitary group ${\rm PSU}(3,1)={\rm SU }(3,1)/\{\pm I,\pm iI\}$.  It is often more convenient to lift to the four-fold covering ${\rm SU }(3,1)$ and view  the linear action of the isometries. 

\medskip Based on their fixed points, holomorphic isometries of $\ch^3$ are classified as follows:
\begin{enumerate}
\item An isometry is \emph{elliptic} if it fixes at least one point of $\ch^3$.
\item An isometry is \emph{parabolic} if it is non-elliptic and fixes exactly one point of $\partial\ch^3$.
\item An isometry is \emph{loxodromic} if it is non-elliptic and fixes exactly two points of $\partial\ch^3$, one of which is attracting and other repelling.  
\end{enumerate}
For algebraic classification of isometries of $\ch^3$ using their conjugacy invariants, see \cite{gpp, g}.
\subsection{Loxodromic Isometries} \label{li}
Let $A\in {\rm SU }(3,1)$ represents a loxodromic isometry.  Then $A$ has eigenvalues
 of the form $re^{i\theta},~r^{-1}e^{i\theta},~e^{i\phi},~e^{-i(2\theta+\phi)}$, $\theta, \phi \in (-\pi, \pi]$.  Let $a_A\in\partial\ch^3$ be the attractive fixed point of $A$, then any lift ${\bf a}_A$ of $a_A$ to $V_0$ is an eigenvector of $A$ and corresponds to the 
eigenvalue  $re^{i\theta}$. Similarly,  if $r_A\in\partial\ch^3$ is the repelling fixed point of $A$, then any lift $\bf{r}_A$ of $r_A$ to $V_0$ is an
eigenvector of $A$ with eigenvalue $r^{-1}e^{i\theta}.$\\ 

For $(r,\theta,\phi)\in S$, define $E(r,\theta,\phi)$ as\\
\begin{equation}\label{li1}
 E(r,\theta,\phi)=\left[\begin{array}{cccc}
                         re^{i\theta} & ~ &~ & ~\\
                         ~ & e^{i\phi} & ~ & ~\\
                         ~ & ~ & e^{-i(2\theta+\phi)} & ~\\
                         ~ & ~ & ~ & r^{-1}e^{i\theta}\\
                        \end{array}\right].\\
\end{equation}
It is easy to see that $E=E(r,\theta,\phi)\in {\rm SU }(3,1)$ represents  a loxodromic map with attractive fixed point $a_E=\bf\infty$ and repelling fixed point 
$r_E={o}$. Equivalently, \eqnref{li1} can be represented in the form 
$$E(\lambda, \psi)={\rm diag}(e^{\lambda}, \ e^{-i \psi + \frac{\overline \lambda -\lambda}{2}}, \ e^{i \psi + \frac{\overline \lambda -\lambda}{2}}, \ e^{-\overline \lambda}), \ \lambda=l + i \theta. $$
We may also assume that  $r>1$ and $\theta, \phi \in (-\pi, \pi]$, and thus  $(\lambda, \psi)\in S$, where $S$ is the region given by:
$$S=\{(\lambda, \psi) \ : \ \Re(\lambda)>0, ~ \Im(\lambda), \psi \in (-\pi, \pi]\}.$$
Let $\textbf{x}_A,~\textbf{y}_A$ be the eigenvectors corresponding to the eigenvalues $e^{i\phi},~e^{-i(2\theta+\phi)}$ respectively, scaled so that 
$\langle \textbf{x}_A,\textbf{x}_A \rangle=1=\langle \textbf{y}_A,\textbf{y}_A \rangle$. Let $C_A=\left[\begin{array}{cccc}
 \textbf{a}_A & \textbf{x}_A & \textbf{y}_A & \textbf{r}_A\\
  \end{array}\right]$ be the $4\times4$ matrix, where
the lifts $\bf{a}_A$ and $\bf{r}_A$ are chosen so that $C_A$ has determinant $1$. Then $C_A\in {\rm SU }(3,1)$ and $A=C_{A}E_{A}(r,\theta,\phi)C_{A}^{-1}$, where
$E_{A}(r,\theta,\phi)$ is given by \eqnref{li1}.
\begin{lemma}\label{lile1}
 Let $A \in {\rm SU }(3,1)$. Then $A$ has characteristic polynomial 
$$\chi_A(X)=X^4-\tau_{A} X^3+\sigma_{A} X^2-\bar\tau_{A} X+1,$$ where $\tau_{A}=tr(A)$ and $\sigma_{A}=\frac{1}{2}(tr^2(A)-tr(A^2))$.
 Moreover $\sigma_{A}$  is real.
\end{lemma}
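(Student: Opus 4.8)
The plan is to read off the coefficients of $\chi_A$ as the elementary symmetric functions of the eigenvalues and then exploit the defining relation of ${\rm U}(3,1)$ to pin down the symmetry of those coefficients. Writing $\chi_A(X) = X^4 - e_1 X^3 + e_2 X^2 - e_3 X + e_4$, the $e_k$ are the elementary symmetric polynomials in the four eigenvalues $\lambda_1, \dots, \lambda_4$ of $A$. By definition $e_1 = tr(A) = \tau_A$, and since $A \in {\rm SU}(3,1)$ we have $e_4 = \det(A) = 1$; this fixes the leading and constant terms. For the $X^2$ coefficient I would invoke Newton's identity $tr(A^2) = e_1^2 - 2e_2$, which rearranges to $e_2 = \frac{1}{2}(tr^2(A) - tr(A^2)) = \sigma_A$, so that the middle coefficient is precisely $\sigma_A$ by definition.

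The key step, and the only place the indefinite Hermitian structure enters, is the identity $A^{-1} = H^{-1} A^{\ast} H$ recorded in Subsection~2.2 for every $A \in {\rm U}(3,1)$. This says $A^{-1}$ is conjugate to $A^{\ast}$, hence the two matrices share the same characteristic polynomial, $\chi_{A^{-1}} = \chi_{A^{\ast}}$. I would compute both sides in terms of the $e_k$. On one hand, since $e_4 = 1$, the eigenvalues of $A^{-1}$ are $\lambda_i^{-1}$ and the elementary symmetric functions of the $\lambda_i^{-1}$ equal $e_{4-k}/e_4$, giving $\chi_{A^{-1}}(X) = X^4 - e_3 X^3 + e_2 X^2 - e_1 X + 1$, that is, the coefficient sequence of $\chi_A$ with $e_1$ and $e_3$ interchanged. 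On the other hand, $A^{\ast}$ has eigenvalues $\overline{\lambda_i}$, so $\chi_{A^{\ast}}(X) = X^4 - \overline{e_1} X^3 + \overline{e_2} X^2 - \overline{e_3} X + 1$. Matching coefficients forces $e_3 = \overline{e_1} = \bar\tau_A$ and $e_2 = \overline{e_2}$, the latter showing that $\sigma_A = e_2$ is real.

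Assembling these, the $X$ coefficient of $\chi_A$ is $-e_3 = -\bar\tau_A$ and the $X^2$ coefficient is $e_2 = \sigma_A \in \R$, which is exactly the claimed form. The argument is elementary; the one point requiring care is the bookkeeping of conjugates versus inverses when writing $\chi_{A^{-1}}$ and $\chi_{A^{\ast}}$, and in particular the use of $\det(A) = 1$ to convert the reversed-coefficient polynomial of $A^{-1}$ into an honest $e_k$-sequence rather than an $e_k/e_4$-sequence. No genuine obstacle arises beyond this careful matching of coefficients.
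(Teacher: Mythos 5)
Your proof is correct, and it follows essentially the same route as the source the paper cites: the paper gives no proof of Lemma~\ref{lile1} itself but refers to \cite{gp}, where the argument is precisely this self-duality computation --- the relation $A^{-1}=H^{-1}A^{\ast}H$ from Subsection~2.2 forces $\chi_{A^{-1}}=\chi_{A^{\ast}}$, and matching the conjugate-reciprocal coefficients (using $\det A=1$) yields $e_3=\bar{\tau}_A$ and $e_2=\sigma_A\in\R$. Your bookkeeping of the reversed coefficients of $\chi_{A^{-1}}$ and the Newton identity $tr(A^2)=e_1^2-2e_2$ is accurate throughout, so there is nothing to repair.
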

For a proof see \cite{gp, gl}. We also denote $\sigma_{A}$ by $\sigma(A)$ in the sequel. 
\begin{prop}\label{lipr1}
 Two loxodromic elements in ${\rm SU}(3,1)$ are conjugate if and only if they have the same eigenvalues.
\end{prop}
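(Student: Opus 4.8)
The plan is to prove the nontrivial direction by showing that every loxodromic element is ${\rm SU}(3,1)$-conjugate to the standard diagonal model $E(r,\theta,\phi)$ of \eqnref{li1}, and that this model is determined by the eigenvalue data. The forward implication is immediate: if $A$ and $B$ are conjugate then they share a characteristic polynomial, hence the same eigenvalues. So I concentrate on the converse.

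First I would record the orthogonality structure forced by membership in ${\rm U}(3,1)$. If $A\mathbf{u}=\lambda\mathbf{u}$ and $A\mathbf{v}=\mu\mathbf{v}$, then invariance of the form gives $\lambda\bar\mu\langle\mathbf{u},\mathbf{v}\rangle=\langle\mathbf{u},\mathbf{v}\rangle$, so $\langle\mathbf{u},\mathbf{v}\rangle=0$ whenever $\lambda\bar\mu\neq1$. Applying this to the eigenvalues $re^{i\theta},\,r^{-1}e^{i\theta},\,e^{i\phi},\,e^{-i(2\theta+\phi)}$ shows that the two null eigenvectors $\aa,\ra$ are each orthogonal to the two positive eigenvectors $\xa,\ya$, and that $\langle\aa,\aa\rangle=\langle\ra,\ra\rangle=0$ (as $r^2\neq 1$). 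Thus the form space $\C^{3,1}$ splits as an orthogonal sum of the plane $W=\mathrm{span}(\aa,\ra)$ and the plane $\mathrm{span}(\xa,\ya)$. Since a totally isotropic subspace in signature $(3,1)$ has dimension at most one, $\langle\aa,\ra\rangle\neq0$, so $W$ is a hyperbolic plane and its orthogonal complement is positive definite of rank two.

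Next I would build the diagonalizing matrix inside ${\rm SU}(3,1)$. On the positive definite plane I choose $\xa,\ya$ orthonormal with $\langle\xa,\xa\rangle=\langle\ya,\ya\rangle=1$ (when $e^{i\phi}=e^{-i(2\theta+\phi)}$ the two eigenvalues coincide, but the restriction of $A$ to this plane is unitary, hence still diagonalizable, and any orthonormal eigenbasis will do). On $W$ I rescale the null vectors so that $\langle\aa,\ra\rangle=1$. With these normalizations the Gram matrix of the frame $(\aa,\xa,\ya,\ra)$ is exactly $H$, so the matrix $C_A=[\,\aa\ \xa\ \ya\ \ra\,]$ satisfies $C_A^{\ast}HC_A=H$, i.e. $C_A\in {\rm U}(3,1)$; multiplying one positive column by a unit scalar corrects the determinant to $1$ without disturbing the form, giving $C_A\in {\rm SU}(3,1)$ with $A=C_AE(r,\theta,\phi)C_A^{-1}$.

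Finally, given loxodromic $A,B$ with the same eigenvalues, I match their frames to a common target: the attracting and repelling null directions are fixed by the moduli $r>1$ and $r^{-1}<1$, and the positive eigenvectors are assigned to columns $2$ and $3$ according to which carries eigenvalue $e^{i\phi}$ versus $e^{-i(2\theta+\phi)}$. Both $A$ and $B$ are then conjugate to the same $E(r,\theta,\phi)$, whence $A=(C_AC_B^{-1})\,B\,(C_AC_B^{-1})^{-1}$ with $C_AC_B^{-1}\in {\rm SU}(3,1)$. The point demanding care is the degenerate case $e^{i\phi}=e^{-i(2\theta+\phi)}$, where the assignment of the positive eigenvectors is not canonical; there I would note that the coordinate swap of positions $2,3$ preserves $H$ and, after correcting its determinant by a unit scalar, lies in ${\rm SU}(3,1)$, so the two possible targets are themselves conjugate and the conclusion is unaffected. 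This is the main, and rather mild, obstacle; everything else is the orthogonal eigenspace bookkeeping carried out above.
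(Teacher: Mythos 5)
Your proof is correct, but it takes a different route from the paper: the paper gives no argument at all for this proposition, simply citing Chen--Greenberg (\emph{Hyperbolic spaces}, 1974), where the conjugacy classification of isometries is established in general. What you have done is reconstruct, from scratch, the normal-form argument that the paper only asserts in Section~\ref{li}, namely that $C_A=[\,\aa\ \xa\ \ya\ \ra\,]$ can be normalized to lie in ${\rm SU}(3,1)$ with $A=C_AE(r,\theta,\phi)C_A^{-1}$; once both $A$ and $B$ are conjugated to the common diagonal model, the proposition follows by transitivity. Your key steps all check out: the relation $\lambda\bar\mu\langle\mathbf{u},\mathbf{v}\rangle=\langle\mathbf{u},\mathbf{v}\rangle$ forces the orthogonal splitting $\C^{3,1}=W\oplus W^{\perp}$ with $W=\mathrm{span}(\aa,\ra)$ hyperbolic (non-degeneracy of $W$ via the bound on totally isotropic subspaces is the right observation), the restriction of $A$ to the positive definite plane $W^{\perp}$ is unitary and hence admits an orthonormal eigenbasis even when $e^{i\phi}=e^{-i(2\theta+\phi)}$ --- which also quietly disposes of any Jordan-block worry --- and the unit-scalar column correction fixes the determinant since $|\det C_A|=1$ automatically. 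One cosmetic remark: in the degenerate case the two ``possible targets'' are not merely conjugate but literally the same matrix $E(r,\theta,\phi)$, since its second and third diagonal entries coincide, so your swap argument, while valid, is not needed. The trade-off between the two approaches is the usual one: the citation buys brevity and generality (Chen--Greenberg covers all isometry types and all ranks), while your argument buys a self-contained, elementary proof tailored to the loxodromic case that also substantiates the eigenframe construction the rest of the paper depends on.
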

For a proof,  see \cite{chen}.
An immediate consequence of Lemma \ref{lile1} and \propref{lipr1} is the following. 
\begin{corollary}\label{licor1}
 Two loxodromic elements $A$ and $A^\prime$ in ${\rm SU }(3,1)$ are conjugate if and only if $\tau_{A}=\tau_{A^\prime}$ and $\sigma_A=\sigma_{A^\prime}$.
\end{corollary}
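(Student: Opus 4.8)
The plan is to deduce the statement directly from \lemref{lile1} and \propref{lipr1}, reducing conjugacy of loxodromic elements to the equality of a single pair of invariants. \propref{lipr1} already asserts that two loxodromic elements of ${\rm SU}(3,1)$ are conjugate precisely when they share the same eigenvalues, so the entire task is to show that the data $(\tau_A, \sigma_A)$ records exactly the eigenvalue information and nothing more.

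First I would invoke \lemref{lile1}, which says that the characteristic polynomial of any $A \in {\rm SU}(3,1)$ has the self-inversive form $\chi_A(X) = X^4 - \tau_A X^3 + \sigma_A X^2 - \bar\tau_A X + 1$. The key observation is that all four non-leading coefficients are governed by the pair $(\tau_A, \sigma_A)$: the coefficient of $X^3$ is $-\tau_A$, the coefficient of $X^2$ is the real number $\sigma_A$, the coefficient of $X$ is forced to be $-\bar\tau_A$ by the Hermitian symmetry, and the constant term equals $1$. Hence $\chi_A = \chi_{A'}$ if and only if $\tau_A = \tau_{A'}$ and $\sigma_A = \sigma_{A'}$. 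Next I would translate equality of characteristic polynomials into equality of eigenvalues. A loxodromic element is diagonalizable; indeed the normal form $A = C_A E_A(r,\theta,\phi) C_A^{-1}$ of \secref{li} exhibits $A$ as conjugate to the diagonal matrix \eqnref{li1}, so the multiset of its eigenvalues coincides with the multiset of roots of $\chi_A$. Consequently two loxodromic elements have the same eigenvalues if and only if they have the same characteristic polynomial. Chaining these equivalences gives that $A$ and $A'$ have equal eigenvalues exactly when $(\tau_A,\sigma_A) = (\tau_{A'},\sigma_{A'})$, and combining this with \propref{lipr1} yields the corollary.

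There is no serious obstacle here; the only point deserving care is verifying that the pair $(\tau_A, \sigma_A)$ genuinely recovers the whole of $\chi_A$, which rests on the observations that the $X^4$ and $X^0$ coefficients are both $1$ and that the $X^1$ coefficient is determined by $\tau_A$. Once this is in place the argument is purely formal, so I would keep the proof to a couple of sentences rather than reproducing the verification of the characteristic polynomial, which is already established in \lemref{lile1}.
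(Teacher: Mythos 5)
Your proposal is correct and follows exactly the route the paper intends: the paper states the corollary as ``an immediate consequence'' of \lemref{lile1} and \propref{lipr1}, and your argument simply makes explicit the two formal steps (that $(\tau_A,\sigma_A)$ determines the full self-inversive characteristic polynomial, and that equality of characteristic polynomials is equivalent to equality of the eigenvalue multisets) that the paper leaves to the reader. The only superfluous detail is the appeal to diagonalizability: the multiset of eigenvalues with algebraic multiplicity is the root multiset of $\chi_A$ for any matrix, so that hypothesis is not needed.
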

\begin{lemma}\label{lile2}
 Let $A=\begin{bmatrix} Ae_1 & Ae_2 & Ae_3 & Ae_4 \end{bmatrix} \in {\rm SU }(3,1)$.  The vector $Ae_3$ is uniquely determined by the vectors $Ae_1,~Ae_2$ and $Ae_4$.
\end{lemma}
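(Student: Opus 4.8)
The plan is to use the two defining properties of $\mathrm{SU}(3,1)$---invariance of the Hermitian form and determinant one---and to show that the first of these determines the third column up to a phase while the second removes the phase. Write $v_i = Ae_i$ for the columns, so that $A = [v_1, v_2, v_3, v_4]$. The condition $A \in \mathrm{U}(3,1)$ is $A^{\ast} H A = H$, which is equivalent to $\langle v_j, v_i \rangle = H_{ij}$ for all $i, j$, since $\langle Ae_i, Ae_j\rangle = \langle e_i, e_j\rangle$.

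First I would single out the relations that involve the third column. Since the $(1,3)$, $(2,3)$ and $(4,3)$ entries of $H$ all vanish, they read $\langle v_3, v_1\rangle = \langle v_3, v_2\rangle = \langle v_3, v_4\rangle = 0$; that is, $v_3$ lies in the $H$-orthogonal complement $W^{\perp}$ of $W := \mathrm{span}(v_1, v_2, v_4)$. Because $A$ is invertible, $v_1, v_2, v_4$ are linearly independent, so $\dim W = 3$; and because $\langle\,\cdot\,,\,\cdot\,\rangle$ is nondegenerate, $\dim W^{\perp} = 1$. As $v_3$ is a nonzero vector of the line $W^{\perp}$, I can fix any generator $u$ of $W^{\perp}$ (which depends only on $v_1, v_2, v_4$) and write $v_3 = \mu u$; the whole problem is thereby reduced to showing that the scalar $\mu \in \C$ is uniquely pinned down.

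Here is the one point where the orthogonality relations do not suffice: the normalization $\langle v_3, v_3\rangle = 1$ only determines $|\mu|$, leaving a free phase $e^{i\psi}$. To kill the phase I would invoke $\det A = 1$. Note that $u \notin W$, for otherwise $v_3 = \mu u \in W$, contradicting the independence of the four columns; hence $\det[v_1, v_2, u, v_4] \neq 0$. The determinant being linear in its third slot, $1 = \det[v_1, v_2, v_3, v_4] = \mu \, \det[v_1, v_2, u, v_4]$ yields $\mu = 1/\det[v_1, v_2, u, v_4]$, a single value. Therefore $v_3 = \mu u$ is completely determined by $v_1, v_2, v_4$, as claimed.

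I do not anticipate a genuine obstacle, since the argument is elementary linear algebra; the only subtlety worth stressing is that $H$-orthogonality together with unit normalization leaves a one-parameter phase ambiguity, so the determinant-one normalization is indispensable and is exactly what resolves it. If desired, the construction $(v_1, v_2, v_4) \mapsto v_3$ can be repackaged as a Hermitian analogue of the triple cross product, giving a closed formula for $v_3$ in terms of the $3\times 3$ minors of the remaining columns.
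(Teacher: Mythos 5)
Your proof is correct and takes essentially the same route as the paper: both identify $Ae_3$ as a vector in the one-dimensional $H$-orthogonal complement $W^{\perp}$ of $W=\mathrm{span}(Ae_1,Ae_2,Ae_4)$, write $Ae_3=\mu u$ for a generator $u$ of $W^{\perp}$, and use $\det A=1$ to pin down the scalar. The only difference is one of detail: the paper leaves the last step implicit, while you spell out the multilinearity computation $1=\mu\det[v_1,v_2,u,v_4]$ and verify $u\notin W$ so that this determinant is nonzero.
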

\begin{proof}
 Let $W$ be the subspace spanned by $Ae_1, Ae_2, Ae_4$.
Let  $W^{\bot}$ be the orthogonal complement of $W$ in $\C^{3,1}$. 
Observe that since $A\in {\rm SU }(3,1),~W \cap W^\bot=\{0\}$ and $W^\bot\neq\{0\}$ is an one dimensional subspace of $\C^4$. Let $W^{\bot}=\langle w \rangle$ for some
$w\in \C^4 $. Then $Ae_3 \in W^{\bot}$ implies that $Ae_3= \lambda w$ for some $\lambda\in \C$. Further the condition $\det(A)=1$ determines $\lambda$ uniquely and the 
assertion follows. 
\end{proof}
\begin{corollary}\label{licor2}
Let $A=\begin{bmatrix} Ae_1 & Ae_2 & Ae_3 & Ae_4\end{bmatrix} ,~B=\begin{bmatrix}Be_1 & Be_2 & Be_3 & Be_4\end{bmatrix} \in {\rm SU }(3,1)$ and $C\in {\rm SU }(3,1)$ be such that $CAe_i=Be_i$ for $i=1,~2,~4$, then $CAe_3=Be_3.$
\end{corollary}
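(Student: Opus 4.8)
The plan is to reduce the statement to \lemref{lile2} by observing that the product $CA$ is again an element of ${\rm SU}(3,1)$. First I would note that since both $C$ and $A$ lie in ${\rm SU}(3,1)$, so does $CA$; hence $CA = [CAe_1, CAe_2, CAe_3, CAe_4]$ is a matrix to which \lemref{lile2} directly applies. In particular, its third column $CAe_3$ is uniquely determined by the three columns $CAe_1$, $CAe_2$, $CAe_4$.

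Next I would apply \lemref{lile2} a second time, now to $B$ itself: the column $Be_3$ is uniquely determined by $Be_1$, $Be_2$, $Be_4$. The uniqueness mechanism is the same in both cases. One forms the subspace $W$ spanned by the first, second and fourth columns, takes its one-dimensional orthogonal complement $W^{\bot}$ with respect to the Hermitian form, and then pins down the scalar multiple of the generator of $W^{\bot}$ by the requirement that the determinant equal $1$.

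Finally, the hypothesis $CAe_i = Be_i$ for $i = 1,2,4$ says that $CA$ and $B$ have identical first, second and fourth columns. Consequently the spanning subspace $W$, its orthogonal complement $W^{\bot}$, and the scalar fixed by the unit-determinant normalisation all coincide for $CA$ and $B$. Their uniquely determined third columns therefore agree, i.e.\ $CAe_3 = Be_3$, as claimed. There is essentially no obstacle here beyond bookkeeping; the only point meriting a word of care is that the determinant normalisation in \lemref{lile2} yields the \emph{same} scalar for the two matrices, which holds because $\det(CA) = \det(B) = 1$ and the three prescribed columns already coincide.
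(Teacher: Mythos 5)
Your proof is correct and is precisely the argument the paper intends: the corollary is stated as an immediate consequence of Lemma~2.6 (\lemref{lile2}), and you apply that lemma's uniqueness mechanism (the subspace $W$, its one-dimensional orthogonal complement, and the $\det=1$ normalisation) to both $CA\in{\rm SU}(3,1)$ and $B$, which share the three prescribed columns. Nothing is missing, and your remark that the unit-determinant condition pins down the same scalar for both matrices is exactly the point that makes the conclusion immediate.
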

From \lemref{lile2} and \corref{licor1},  we have the following.
\begin{corollary}\label{licor3}
 Let $A$ and $A'$ are two loxodromic elements in ${\rm {\rm SU }}(3,1)$ such that $\tau_{A}=\tau_{A^\prime}$, $\sigma_A=\sigma_{A^\prime},~a_{A}=a_{A'},~
r_{A}=r_{A'}$ and $x_{A}=x_{A'}$, then $A=A'$.  
\end{corollary}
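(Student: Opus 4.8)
The plan is to upgrade the spectral information into a full determination of the matrix by combining Corollary \ref{licor1} with the frame-rigidity of \lemref{lile2}. First I would note that, by \lemref{lile1}, the equalities $\tau_A=\tau_{A'}$ and $\sigma_A=\sigma_{A'}$ make the characteristic polynomials of $A$ and $A'$ identical, so the two maps share the eigenvalue list $re^{i\theta},\,e^{i\phi},\,e^{-i(2\theta+\phi)},\,r^{-1}e^{i\theta}$; equivalently, by \corref{licor1} they are conjugate and normalize to the same $E=E(r,\theta,\phi)$ of \eqnref{li1}. Since the modulus distinguishes the attracting and repelling eigenvalues ($r>1$ against $r^{-1}<1$) and the convention $\theta\le\phi$ selects which unit eigenvalue is attached to the positive eigenvector, each shared eigendirection $a_A=a_{A'}$, $r_A=r_{A'}$, $x_A=x_{A'}$ carries the same eigenvalue for $A$ and for $A'$. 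Because the eigenvector equation is insensitive to scaling, this already forces $A$ and $A'$ to agree on the three-dimensional subspace $W=\mathrm{span}(\aa,\xa,\ra)$.

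It then remains to control the fourth eigendirection. Writing the diagonalizing frames $C_A=[\aa,\xa,\ya,\ra]$ and $C_{A'}=[{\bf a}_{A'},{\bf x}_{A'},{\bf y}_{A'},{\bf r}_{A'}]$ in ${\rm SU}(3,1)$, the hypotheses identify the first, second and fourth columns of $C_A$ and $C_{A'}$. \lemref{lile2}, applied to $C_A$, says that its third column $\ya$ is uniquely determined by the other three; applying the same lemma to $C_{A'}$ yields $\ya={\bf y}_{A'}$. Hence $C_A=C_{A'}$, and therefore $A=C_A\,E\,C_A^{-1}=C_{A'}\,E\,C_{A'}^{-1}=A'$, as claimed.

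I expect the main obstacle to be the step hidden in the phrase ``the columns coincide'': the data $a_A,r_A,x_A$ are a priori projective, whereas \lemref{lile2} requires genuine vector equality of three columns of a determinant-one matrix, so one must match the lifts and phases. To avoid this bookkeeping I would argue intrinsically. The relations $C_A^{\ast}HC_A=H$ force $\ya$ to be $H$-orthogonal to each of $\aa,\xa,\ra$, so $\ya\in W^{\perp}$, and likewise ${\bf y}_{A'}\in W^{\perp}$; since $H$ is non-degenerate and $W\cap W^{\perp}=\{0\}$, the complement $W^{\perp}$ is one-dimensional and ${\bf y}_{A'}$ is a scalar multiple of $\ya$. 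As both vectors carry the common eigenvalue $e^{-i(2\theta+\phi)}$, we obtain $A\ya=A'\ya$; combined with $A|_W=A'|_W$ and the decomposition $\C^4=W\oplus\langle\ya\rangle$ this gives $A=A'$. This orthogonality computation is exactly the coordinate-free content of \lemref{lile2} and removes the normalization issue entirely.
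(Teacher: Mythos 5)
Your proposal is correct and takes essentially the paper's intended route: the paper proves this corollary precisely by combining \corref{licor1} (equal $\tau$ and $\sigma$ give equal characteristic polynomials, hence equal eigenvalue lists) with \lemref{lile2}, whose proof is exactly the decomposition $\C^4=W\oplus W^{\perp}$ with $W^{\perp}$ one-dimensional that you use. Your intrinsic resolution of the projective-versus-vector lift issue --- placing both $\ya$ and ${\bf y}_{A'}$ in $W^{\perp}$ and matching them through the common eigenvalue $e^{-i(2\theta+\phi)}$ instead of through the $\det=1$ normalization --- is a careful tightening of a step the paper leaves implicit, not a different argument.
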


\section{The Cross-Ratios}\label{crai}

\subsection{The Kor\'anyi-Reimann cross-ratio}\label{cr}
Given a quadruple of distinct points $(z_1, z_2, z_3, z_4)$ on $\partial \ch^3$, their Kor\'anyi-Reimann cross-ratio is defined by
$$\X(z_1, z_2, z_3, z_4)=[z_1,z_2,z_3,z_4]=\frac{\langle {\bf z}_3, {\bf z}_1 \rangle \langle {\bf z}_4, \bf z_2 \rangle} { \langle {\bf z}_4, {\bf z}_1\rangle \langle   {\bf z}_3, {\bf z}_2 \rangle},$$
where, for $i=1,2,3,4$,  ${\bf z}_i$, are lifts of $z_i$. It can be seen easily that $\X$ is independent of the chosen lifts of $z_i$'s. By choosing different ordering of the four points, we may define other cross-ratios and it can be seen in \cite[p.225]{gold} that there are certain symmetries that are associated with certain permutations. After taking these into account, there are only three cross-ratios that remain. Given distinct points $z_1,~z_2,~z_3,~z_4$ in $\partial\ch^3$, we define :
\begin{equation}\label{crveq1}
 \X_1=[z_1,~z_2,~z_3,~z_4],~ \X_2=[z_1,~z_3,~z_2,z_4],~ \X_3=[z_2,~z_3,~z_1,~z_4]. 
\end{equation}
 Parker and Platis \cite{pp}, also see Falbel \cite{f},  have shown that the triples of cross-ratios of an ordered quadruple of points in $\partial\ch^2$ satisfy two real equations. If the ordered triples of points belongs to $\partial\ch^3$, the corresponding cross-ratios satisfy only one real equation and one real inequality as shown in 
the following proposition. 
 
\begin{prop}\label{crp}
Let $z_1,~z_2,~z_3,~z_4$ be four distinct points in $\partial\ch^3$. Let $\X_1,~\X_2,~\X_3$ be defined by \ref{crveq1}, then 
\begin{equation} \label{cr1} |\X_2|=|\X_1||\X_3|,\end{equation}
 \begin{equation} \label{cr2} 2|\X_1|^2\Re(\X_3)\ge|\X_1|^2+|\X_2|^2+1-2\Re(\X_1+\X_2). \end{equation}	
Further, equality holds in \eqnref{cr2} if and only if either of the following holds: 
\begin{enumerate}
\item[(i)] $z_1, ~ z_2, ~ z_4$ lie on the same complex line. 
\item[(ii)] $z_1, ~ z_3, ~ z_4$ lie on the same complex line. 
\item[(iii)] $z_1, ~ z_2, ~ z_3, ~ z_4$ lie on the same complex line. 
\end{enumerate}
\end{prop}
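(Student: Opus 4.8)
The plan is to reduce everything to the single $4\times 4$ Hermitian Gram matrix
$G=\big(\langle {\bf z}_i,{\bf z}_j\rangle\big)_{1\le i,j\le 4}$ of the four null lifts and to exploit that the ambient form has signature $(3,1)$, hence only one negative direction. The identity \eqref{cr1} is purely formal: writing $a_{ij}=\langle {\bf z}_i,{\bf z}_j\rangle$ and using $a_{ji}=\overline{a_{ij}}$, each of $\X_1,\X_2,\X_3$ is a ratio of products of the $a_{ij}$, so $|\X_1|^2|\X_3|^2$ and $|\X_2|^2$ are ratios of the same moduli $|a_{ij}|^2$; after the factor $|a_{13}|^2=|a_{31}|^2$ cancels one is left with $|\X_1|^2|\X_3|^2=|\X_2|^2$. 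This uses no positivity and holds for any four distinct boundary points.

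For the inequality \eqref{cr2} I would first normalize. Since the points are distinct, $\langle {\bf z}_i,{\bf z}_4\rangle\neq 0$, so I rescale the lifts so that $\langle {\bf z}_i,{\bf z}_4\rangle=1$ for $i=1,2,3$; this alters neither the cross ratios nor the vanishing of $\det G$. Putting $p=\langle {\bf z}_1,{\bf z}_2\rangle$, $q=\langle {\bf z}_1,{\bf z}_3\rangle$, $s=\langle {\bf z}_2,{\bf z}_3\rangle$ and using $\langle {\bf z}_i,{\bf z}_i\rangle=0$, a cofactor expansion along the last row of $G$ (whose last row is $(1,1,1,0)$) gives
\[
\det G=|p|^2+|q|^2+|s|^2-2\Re(ps)-2\Re(q\bar s)-2\Re(p\bar q).
\]
With the same normalization the cross ratios become $\X_3=p/q$, $\X_2=\bar p/s$, $\X_1=\bar q/\bar s$, and substituting these and dividing by $|s|^2>0$ rewrites $\det G/|s|^2$ as the difference of the right and left sides of \eqref{cr2}. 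Thus \eqref{cr2} is literally the assertion $\det G\le 0$.

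It remains to prove $\det G\le 0$, and this is where the signature of $\C^{3,1}$ enters. The Hermitian form restricted to the span of the four lifts has matrix $G$, and since the ambient form has a single negative direction this restriction has at most one negative eigenvalue. If the lifts are linearly independent they span $\C^{3,1}$, so $G$ is congruent to $H$, has signature $(3,1)$, and $\det G<0$; if they are dependent, $\det G=0$. Either way $\det G\le 0$, which is \eqref{cr2}.

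The delicate part is the equality statement. Equality in \eqref{cr2} is exactly $\det G=0$ (recall $|s|^2\neq 0$), i.e.\ the four lifts are linearly dependent, equivalently their span is degenerate. To convert this into geometry I would use the elementary facts that two distinct null vectors span a $\C^{1,1}$ and that three distinct null vectors are linearly dependent precisely when they span a common $\C^{1,1}$, that is, when the three points lie on a common chain; concretely, three points lie on a common complex line iff their $3\times 3$ Gram minor vanishes. Analysing the rank-two and rank-three cases of $G$ in terms of which principal minors vanish should recover the degenerations (i)--(iii). I expect this to be the main obstacle: one must enumerate completely the ways a rank-deficient $G$ can occur, and in particular treat with care the possibility that the four points become dependent only by lying on a common $\C^2$-chain, without any three of them lying on a single complex line, so that the equality locus is described correctly.
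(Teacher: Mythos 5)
Your argument for \eqnref{cr1} and for the inequality \eqnref{cr2} is correct, and it takes a genuinely different route from the paper's. The paper uses double transitivity to normalize $z_2=\infty$, $z_3=o$, fixes $\langle \z_4,\z_1\rangle=1$, and verifies \eqnref{cr2} through a Lagrange-type identity, ending with $|\X_1|^2|\X_3-1|^2-|\X_1+\X_2|^2+|1-\X_1-\X_2|^2=-|\nu_2\alpha_1-\eta_1\zeta_4|^2\le 0$. You instead normalize all lifts against $\z_4$ and identify the defect in \eqnref{cr2} with $\det G/|s|^2$; I checked your cofactor expansion of $\det G$ and the substitutions $\X_3=p/q$, $\X_2=\bar p/s$, $\X_1=\bar q/\bar s$, and they are right, so \eqnref{cr2} is literally $\det G\le 0$, which your signature argument settles (independent lifts make $G$ congruent to $H$, so $\det G<0$; dependent lifts give $\det G=0$). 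Your route buys a coordinate-free formulation---the defect is exactly a Gram determinant, which is the viewpoint of Cunha--Gusevskii and of Platis's rank-one generalization---and it makes the equality locus structurally transparent; the paper's computation buys an explicit coordinate form of the equality condition. In fact the two conditions coincide: $\nu_2\alpha_1-\eta_1\zeta_4=0$ says the middle components $(\eta_1,\alpha_1)$ and $(\nu_2,\zeta_4)$ are proportional, which in the paper's normalization says precisely that $\z_4\in\mathrm{span}(\z_1,\z_2,\z_3)$, i.e.\ your $\det G=0$.

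The obstacle you flag at the end is real, but it is an obstacle to the \emph{statement}, not to your method, so do not try to ``recover (i)--(iii)'': they cannot be recovered, because they do not describe the equality locus. Equality in \eqnref{cr2} holds if and only if the four lifts are dependent, and, as you note, a rank-$3$ span of distinct null lines must carry signature $(2,1)$ (a degenerate $3$-dimensional subspace of $\C^{3,1}$ is positive semidefinite with one-dimensional radical, hence contains only one null direction), so dependence means exactly that the points lie on a common $\C^2$-chain, the rank-$2$ case being a common chain. This is strictly weaker than (i)--(iii). Concretely, take $z_2=\infty$, $z_3=o$, and the null lifts $\z_1=(-\tfrac12,\,1,\,0,\,1)^{T}$, $\z_4=(-\tfrac14,\,1,\,0,\,2)^{T}$ with respect to the paper's $H$. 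Here $(\eta_1,\alpha_1)=(1,0)=(\nu_2,\zeta_4)$, so the paper's own equality condition holds (all four lifts lie in the copy of $\C^{2,1}$ spanned by $e_1,e_2,e_4$, and $\det G=0$), yet each of the four triples of lifts has rank $3$, so no three of the points lie on a complex line and none of (i)--(iii) holds. (The list (i)--(iii) also omits the collinear triples $\{z_1,z_2,z_3\}$ and $\{z_2,z_3,z_4\}$: for instance $(\eta_1,\alpha_1)=(0,0)$ forces equality for every $z_4$.) The paper's proof derives exactly the proportionality condition and then asserts (i)--(iii) in one unjustified sentence; that step is wrong. Carrying out your dependence analysis yields the corrected equality clause: equality holds in \eqnref{cr2} if and only if $z_1,z_2,z_3,z_4$ lie on a common $\C^2$-chain.
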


\begin{proof}
 Since ${\rm SU }(3,1)$ acts doubly transitively on $\partial\ch^3$, we may suppose that  $z_2=\bf\infty$ and $z_3={o}$. Let $\z_1, \z_4$ be lifts of $z_1$ and
$z_4$ chosen so that $\langle \z_1, \z_4 \rangle=1$. We write them in coordinates as 
$$\z_1=\left[\begin{array}{c}
             \xi_1\\
             \eta_1\\
             \alpha_1\\   
             \delta_1\\ 
            \end{array}\right], 
~\z_2=\left[\begin{array}{c}
             1\\0\\0\\0\\
                \end{array}\right],
 ~\z_3=\left[\begin{array}{c}
             0\\0\\0\\1\\
            \end{array}\right],
 ~\z_4=\left[\begin{array}{c}
             \xi_4\\
             \eta_4\\
             \alpha_4\\   
             \delta_4\\ 
            \end{array}\right].$$
Then we have
\begin{equation}\label{1}
 0=\langle \z_1,~\z_1 \rangle=\xi_1\bar \delta_1+\bar \xi_1\delta_1+|\eta_1|^2+|\alpha_1|^2; 
\end{equation}
\begin{equation}\label{2}
 1=\langle \z_4,~\z_1 \rangle=\xi_4\bar \delta_1+\delta_4\bar \xi_1+\eta_4\bar \eta_1+\alpha_4\bar \alpha_1; 
\end{equation}
\begin{equation}\label{3}
 0=\langle \z_4,~\z_4 \rangle=\xi_4\bar \delta_4+\bar \xi_4\delta_4+|\eta_4|^2+|\alpha_4|^2. 
\end{equation}
From the definitions of the cross-ratios we have:
$$\X_1=[z_1,~z_2,~z_3,~z_4]=\frac{\langle \z_3,~\z_1\rangle \langle \z_4,~\z_2\rangle}{\langle \z_4,~\z_1\rangle \langle \z_3,~\z_2\rangle}=
\bar \xi_1\delta_4, $$\\
$$\X_2=[z_1,~z_3,~z_2,~z_4]=\frac{\langle \z_2,~\z_1\rangle \langle \z_4,~\z_3\rangle}{\langle \z_4,~\z_1\rangle \langle \z_2,~\z_3\rangle}=
\xi_4\bar\delta_1, $$\\
$$\X_3=[z_2,~z_3,~z_1,~z_4]=\frac{\langle \z_1,~\z_2\rangle \langle \z_4,~\z_3\rangle}{\langle \z_4,~\z_2\rangle \langle \z_1,~\z_3\rangle}=
\frac{\xi_4\delta_1}{\xi_1\delta_4}.$$
We immediately see that $$|\X_3|=\frac{|\X_2|}{|\X_1|}.$$\\
Using equtaions \ref{1}--\ref{3}, we have:\\
\begin{align*}
 |\X_1|^2|\X_3-1|^2&=|\xi_4\delta_1-\xi_1\delta_4|^2\\
&=|\xi_4\delta_1|^2+|\xi_1\delta_4|^2-\xi_4\delta_1\bar\xi_1\bar\delta_4-\bar\xi_4\bar\delta_1\xi_1\delta_4\\
&=|\bar \xi_1\delta_4|^2+|\xi_4\bar \delta_1|^2+\xi_4\bar\delta_4(\xi_1\bar\delta_1+|\eta_1|^2+|\alpha_1|^2)+\bar\xi_4\delta_4
(\bar\xi_1\delta_1+|\eta_1|^2+|\alpha_1|^2)\\
&=|\bar \xi_1\delta_4+\xi_4\bar\delta_1|^2-(|\eta_4|^2+|\alpha_4|^2)(|\eta_1|^2+|\alpha_1|^2)\\
&=|\bar \xi_1\delta_4+\xi_4\bar\delta_1|^2-|\eta_4\bar\eta_1+\alpha_4\bar\alpha_1|^2-|\eta_4\alpha_1-\eta_1\alpha_4|^2\\
&=|\X_1+\X_2|^2+|1-\X_1-\X_2|^2-|\eta_4\alpha_1-\eta_1\alpha_4|^2. 
\end{align*}
This implies, 
$$|\X_1|^2|\X_3-1|^2-|\X_1+\X_2|^2+|1-\X_1-\X_2|^2=-|\eta_4\alpha_1-\eta_1\alpha_4|^2\le0.$$
Rearranging this gives the inequality we want. Further the above inequality is an equality if and only if 
$$\eta_4\alpha_1-\eta_1\alpha_4=0, \ i.e., \ \ \frac{\eta_4}{\eta_1}=\frac{\alpha_4}{\alpha_1}.$$
This means either of the conditions (i), (ii) , (iii) given in the statement. This proves the proposition. 
\end{proof}
Platis \cite{pl} has proved a generalization of the above proposition for arbitrary rank-one symmetric spaces of non-compact type and has applied it to  derive Ptolemaean inequality on the boundary of a rank-one symmetric space of non-compact type. Since we have restricted ourselves only to three dimensional complex hyperbolic geometry, our proof above is much simpler. 
\begin{corollary}
 Let $\X_1,~\X_2,~\X_3$ be defined by \ref{crveq1}, then $2\Re(\X_1+\X_2)\ge1$.
\end{corollary}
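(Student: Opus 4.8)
The plan is to deduce the bound directly from \propref{crp}, using only inequality \eqnref{cr2} together with the modulus identity \eqnref{cr1}; no new geometric input is needed. First I would rearrange \eqnref{cr2} so that the quantity of interest stands alone on the left, which gives
$$2\Re(\X_1+\X_2)\ge |\X_1|^2+|\X_2|^2+1-2|\X_1|^2\Re(\X_3).$$
Consequently it suffices to show that the right-hand side is at least $1$, i.e. that
$$|\X_1|^2+|\X_2|^2\ge 2|\X_1|^2\Re(\X_3).$$

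The second step is to control the term $\Re(\X_3)$. Since $\Re(w)\le |w|$ for any complex number $w$, we have $\Re(\X_3)\le |\X_3|$, and \eqnref{cr1} gives $|\X_3|=|\X_2|/|\X_1|$. Substituting yields
$$2|\X_1|^2\Re(\X_3)\le 2|\X_1|^2|\X_3|=2|\X_1|\,|\X_2|.$$
Finally, the arithmetic–geometric mean inequality $|\X_1|^2+|\X_2|^2\ge 2|\X_1|\,|\X_2|$ (equivalently $(|\X_1|-|\X_2|)^2\ge 0$) closes the chain, so that $|\X_1|^2+|\X_2|^2+1-2|\X_1|^2\Re(\X_3)\ge 1$, and combining with the rearranged form of \eqnref{cr2} gives $2\Re(\X_1+\X_2)\ge 1$.

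There is essentially no obstacle to overcome: the estimate is a short algebraic consequence of the proposition, and the only points requiring a moment's care are the correct sign bookkeeping when isolating $2\Re(\X_1+\X_2)$ in \eqnref{cr2} and the invocation of $\Re(\X_3)\le|\X_3|$ (which is where the passage from a complex cross-ratio to its modulus, and hence the need for \eqnref{cr1}, enters). One could also track the equality case, since equality throughout forces both $\Re(\X_3)=|\X_3|$ and $|\X_1|=|\X_2|$, though this is not required for the stated inequality.
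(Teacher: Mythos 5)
Your proposal is correct and follows essentially the same route as the paper: the paper's proof likewise rearranges \eqnref{cr2}, bounds $2|\X_1|^2\Re(\X_3)\le 2|\X_1|^2|\X_3|=2|\X_1||\X_2|$ using $\Re(\X_3)\le|\X_3|$ together with \eqnref{cr1}, and concludes via $(|\X_1|-|\X_2|)^2\ge 0$. Your remark on the equality case is a minor addition not present in the paper, but otherwise the two arguments coincide step for step.
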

\begin{proof}
 Since $\Re(\X_3)\le|\X_3|,$
\begin{align*}
2\Re(\X_1+\X_2)-1&\geq|\X_1|^2+|\X_2|^2-2|\X_1|^2\Re(\X_3)\\
&\geq|\X_1|^2+|\X_2|^2-2|\X_1|^2|\X_3|\\
&=|\X_1|^2+|\X_2|^2-2|\X_1||\X_2|\\
&=(|\X_1|-|\X_2|)^2\geq0. 
\end{align*}
In particular $2\Re(\X_1+\X_2)\geq1$. 
\end{proof}

\subsection{Cartan's angular invariant}\label{cai}
Let $z_1,~z_2,~z_3$ be three distinct points of $\partial\ch^3$ with lifts $\z_1,~\z_2$ and $\z_3$ respectively. Cartan's angular invariant is defined as 
follows :
$$\A(z_1,~z_2,~z_3)=arg(-\langle \z_1,\z_2 \rangle \langle \z_2,\z_3 \rangle \langle \z_3,\z_1 \rangle).$$
The angular invariant is invariant under ${\rm SU }(3,1)$ and independent of the chosen lifts. The following proposition shows that this invariant determines 
any triples of distinct points in $\partial\ch^3$ up to ${\rm SU }(3,1)$-equivalence.
\begin{prop}\label{cai1}
Let $z_1,~z_2,~z_3$ and $z_1^\prime,~z_2^\prime,~z_3^\prime$ be triples of distinct points of $\partial\ch^3$. Then $\A(z_1,~z_2,~z_3)=\A(z_1^\prime,~
z_2^\prime,~z_3^\prime)$ if and only if there exist $A\in {\rm SU }(3,1)$ so that $A(z_j)=z_j^\prime$ for $j=1,2,3$. 
\end{prop}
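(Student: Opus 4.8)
The plan is to prove the non-trivial (``if'') direction by reducing to a normal form via the double transitivity of ${\rm SU}(3,1)$ on $\partial\ch^3$, the ``only if'' direction being immediate since $\A$ has already been observed to be ${\rm SU}(3,1)$-invariant and independent of the chosen lifts. So suppose $\A(z_1, z_2, z_3) = \A(z_1', z_2', z_3')$. Using double transitivity (exactly as in the proof of \propref{crp}) I would choose $A_1 \in {\rm SU}(3,1)$ with $A_1 z_1 = \infty$ and $A_1 z_2 = o$, and set $w = A_1 z_3$; similarly choose $A_1'$ carrying $(z_1', z_2')$ to $(\infty, o)$ and set $w' = A_1' z_3'$. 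Since $\A$ is invariant, $\A(\infty, o, w) = \A(\infty, o, w')$, and it then suffices to produce $B$ in the stabilizer $G_0 \subset {\rm SU}(3,1)$ of the ordered pair $(\infty, o)$ with $B w = w'$; the element $A = (A_1')^{-1} B A_1$ will then satisfy $A z_j = z_j'$ for $j = 1,2,3$.

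I would next make $G_0$ explicit. An element of $G_0$ fixes the lines $\langle e_1\rangle = \infty$ and $\langle e_4\rangle = o$, hence preserves their common orthogonal complement $\langle e_2, e_3\rangle$, on which the Hermitian form is the standard positive definite one. Thus $G_0$ acts by $\w = (\w_1, \w_2, \w_3, \w_4) \mapsto (\lambda \w_1,\ U(\w_2, \w_3),\ \bar\lambda^{-1}\w_4)$ with $\lambda \in \C^\ast$ and $U \in {\rm U}(2)$; the determinant-one condition only fixes an overall scalar, which does not affect the action on $\partial\ch^3$ and so imposes no restriction on the relevant (projective) action. For a lift $\w = (\xi, \eta, \zeta, \delta)$ of $w$, a direct computation from the definition of $\A$ and the form gives $\A(\infty, o, w) = \arg(-\bar\xi\,\delta)$, while the null condition reads $2\Re(\xi\bar\delta) + |\eta|^2 + |\zeta|^2 = 0$. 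One checks that $\bar\xi\delta$, and hence $\A$, is left unchanged by the $G_0$-action.

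Finally I would carry out the normalization. Using $U \in {\rm U}(2)$ I can rotate $(\eta, \zeta)$ to $(\rho, 0)$ with $\rho = (|\eta|^2+|\zeta|^2)^{1/2} \ge 0$; rescaling the lift by a positive real number fixes the scale; and since the residual action $\xi \mapsto \lambda\xi$, $\delta \mapsto \bar\lambda^{-1}\delta$ preserves the product $\bar\xi\delta$ and acts simply transitively on the pairs $(\xi, \delta)$ with that product fixed and both entries nonzero, the pair $(\xi, \delta)$ is driven to a unique representative. The null relation forces $\Re(\bar\xi\delta)$ to be determined by $\rho$, while $\arg(-\bar\xi\delta) = \A$ supplies the remaining real datum; together these pin $w$ down to a canonical representative depending only on $\A$. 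Hence $w$ and $w'$, sharing the same value of $\A$, lie in a single $G_0$-orbit, as required. The step demanding the most care is the degenerate stratum $|\A| = \pi/2$, where the three points lie on a common chain and $\rho = 0$ forces $\xi, \delta \neq 0$ with $\bar\xi\delta$ purely imaginary: there the normalization must be handled separately (yielding a representative such as $(1, 0, 0, \pm i)$), and one must verify that combining the ${\rm U}(2)$, $\C^\ast$ and lift-scaling freedoms is compatible with the determinant-one constraint. This bookkeeping, rather than any deep point, is the main obstacle.
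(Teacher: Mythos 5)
Your proof is correct, but it is worth noting that the paper does not actually prove this proposition: it states it and defers entirely to Goldman's book (\cite{gold}), so there is no internal argument to compare against. What you have written is essentially the standard normalization proof from the cited source, and it is fully consistent with the paper's own toolkit: the stabilizer of the ordered pair $(\infty, o)$ as block matrices $\mathrm{diag}(\lambda, U, \bar\lambda^{-1})$ with $U \in {\rm U}(2)$ is exactly the form the paper uses in its proof of \propref{crvpr3}, and your degenerate stratum $\rho = 0$ is precisely the chain case $\A = \pm\pi/2$ of \propref{cra}. Two points you flag as bookkeeping do check out and deserve one line each in a final write-up. First, after rotating $(\eta,\zeta)$ to $(\rho,0)$, use the positive lift-scaling to normalize $\rho \in \{0,1\}$; the null relation then gives $\Re(-\bar\xi\delta) = \rho^2/2$, and since $\rho = 1$ forces $\Re(-\bar\xi\delta) > 0$, hence $\A \in (-\pi/2,\pi/2)$, the real part together with $\arg(-\bar\xi\delta) = \A$ determines the complex number $-\bar\xi\delta$ uniquely; your observation that $\lambda \mapsto (\lambda\xi, \bar\lambda^{-1}\delta)$ is simply transitive on pairs with fixed nonzero product (take $\lambda = \xi'/\xi$ and note the product condition is exactly what makes $\bar\lambda^{-1}\delta = \delta'$) then finishes the generic case, while $\rho = 0$ gives the representative $(1,0,0,\pm i)$ as you say, with the sign read off from $\A = \pm\pi/2$, so the dichotomy itself depends only on $\A$. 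Second, the determinant-one constraint is indeed harmless: any $\mathrm{diag}(\lambda, U, \bar\lambda^{-1})$ preserves the form and has unit-modulus determinant, so multiplying by a fourth root of its inverse lands in ${\rm SU}(3,1)$ without changing the projective action. With those two remarks made explicit, your argument is a complete, self-contained proof of the statement the paper only cites.
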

For a proof see \cite{gold}. Also we have the following result from \cite{gold}.
\begin{prop}\label{cra}
Let $z_1$, $z_2$, $z_3$ be three distinct points of $\partial \ch^3$ and let $\A=\A(z_1, z_2, z_3)$ be their angular invariant. Then
\begin{enumerate}
\item $\A \in [-\frac{\pi}{2}, \frac{\pi}{2}]$. 
\item $\A=\pm \frac{\pi}{2}$ if and only if $z_1$, $z_2$, $z_3$ lie on the same chain.
\item $\A=0$ if and only if $z_1$, $z_2$, $z_3$ lie on a totally real totally geodesic subspace. 
\end{enumerate}
\end{prop}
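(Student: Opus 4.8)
The plan is to reduce all three statements to the signature of a single Hermitian Gram matrix together with one determinant identity. Fix lifts $\z_1,\z_2,\z_3\in V_0$ and form the $3\times 3$ Hermitian matrix $G=(\langle\z_i,\z_j\rangle)_{i,j=1}^{3}$. Since each $z_i$ is null, the diagonal of $G$ vanishes, and a direct $3\times 3$ expansion gives $\det G = 2\,\Re(\langle\z_1,\z_2\rangle\langle\z_2,\z_3\rangle\langle\z_3,\z_1\rangle)$. Writing $P=\langle\z_1,\z_2\rangle\langle\z_2,\z_3\rangle\langle\z_3,\z_1\rangle$, the definition $\A=\arg(-P)$ gives $-P=|P|e^{i\A}$, hence $\det G = 2\,\Re(P) = -2|P|\cos\A$, so the sign of $\det G$ is governed by $\cos\A$. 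Before using this I would record that for distinct boundary points every factor of $P$ is nonzero: if $\langle\z_i,\z_j\rangle=0$ with $i\neq j$, then $\mathrm{span}(\z_i,\z_j)$ would be a $2$-dimensional totally isotropic subspace, which is impossible in signature $(3,1)$ because its Witt index is $1$. This makes $\A$ well defined, and it is independent of the chosen lifts since rescaling $\z_i\mapsto\lambda_i\z_i$ multiplies $P$ by $|\lambda_1\lambda_2\lambda_3|^2>0$.

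For parts (1) and (2) I would analyze $W=\mathrm{span}(\z_1,\z_2,\z_3)$. If the lifts are linearly independent, then $W$ is $3$-dimensional and nondegenerate: a nonzero vector $w$ in the radical $W\cap W^{\perp}$ is null and orthogonal to each $\z_i$, so unless $w$ is proportional to some $\z_i$ the pair $\{w,\z_1\}$ would again span a $2$-dimensional totally isotropic subspace (the proportional case forces some $\langle\z_i,\z_j\rangle=0$, already excluded). A nondegenerate $3$-dimensional subspace of a signature $(3,1)$ space cannot be positive definite, since it contains the null vectors $\z_i$; hence it has signature $(2,1)$ and $\det G<0$, so $\cos\A>0$ and $\A\in(-\tfrac{\pi}{2},\tfrac{\pi}{2})$. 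If instead the lifts are linearly dependent, then $G$ is singular, so $\cos\A=0$ and $\A=\pm\tfrac{\pi}{2}$; moreover three distinct null points that are linearly dependent lie in a $2$-dimensional subspace, which must have signature $(1,1)$ (the other $2$-dimensional possibilities carry at most one null point), that is, they lie on a common chain. Combining the two cases establishes (1) and the equivalence in (2).

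For part (3) I would use a real Gram matrix criterion. If $z_1,z_2,z_3$ lie on a totally real totally geodesic subspace, then taking the lifts fixed by the defining antiholomorphic involution $\iota$ (which conjugates the form) gives $\langle\z_i,\z_j\rangle=\overline{\langle\z_i,\z_j\rangle}$, so each $\langle\z_i,\z_j\rangle$ is real, $P$ is real, and $\A\in\{0,\pi\}$; part (1) then forces $\A=0$. Conversely, if $\A=0$ then $P$ is a negative real number, and I would rescale the lifts by unit phases $\z_i\mapsto e^{i\theta_i}\z_i$ to make all three entries $\langle\z_i,\z_j\rangle$ simultaneously real; this is solvable precisely because $\arg P$ is a multiple of $\pi$. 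The resulting $G$ is real symmetric, so $\mathrm{span}_{\R}(\z_1,\z_2,\z_3)$ is a totally real subspace, which determines the required totally real totally geodesic subspace through the three points. Alternatively, one may invoke the completeness of $\A$ from \propref{cai1} to transport the triple to the standard real model, already shown to have $\A=0$.

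The main obstacle is the converse in (3): passing from a real Gram matrix to an honest totally real totally geodesic subspace containing the three points requires exhibiting the antiholomorphic involution (equivalently, verifying that the real span is Lagrangian for the imaginary part of the form and that the real part has the correct restricted signature). The signature bookkeeping in the second paragraph is the other delicate point, since the repeated use of the absence of $2$-dimensional totally isotropic subspaces in signature $(3,1)$ is exactly what rules out spurious degenerate configurations and makes the equivalence in (2) sharp.
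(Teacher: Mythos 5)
The paper does not actually prove this proposition: it is quoted verbatim from Goldman's book (the text says ``For a proof see \cite{gold}''), so there is no internal argument to compare against. Your proof is correct and self-contained, and all the delicate points you flag are genuinely resolvable as you indicate. The determinant identity checks out: with $g_{ij}=\langle\z_i,\z_j\rangle$ and zero diagonal, the only two surviving terms in the expansion are $g_{12}g_{23}g_{31}$ and its conjugate, giving $\det G=2\Re(P)=-2|P|\cos\A$; the Witt-index observation (no $2$-dimensional totally isotropic subspace in signature $(3,1)$) correctly guarantees $|P|\neq 0$ and rules out a degenerate $3$-dimensional span, so the dichotomy ``lifts independent versus dependent'' yields signature $(2,1)$ with $\det G<0$ (hence $\cos\A>0$) versus $\det G=0$ (hence $\A=\pm\tfrac{\pi}{2}$), and your inventory of $2$-dimensional subspaces containing three distinct null lines — only $(1,1)$ is possible, whose projectivized null cone is exactly a chain — closes the equivalence in (2). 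For (3), the phase-rescaling system $\phi_{12}+\theta_1-\theta_2\equiv\phi_{23}+\theta_2-\theta_3\equiv\phi_{31}+\theta_3-\theta_1\equiv 0\pmod{\pi}$ is solvable precisely because $\arg P\equiv 0\pmod{\pi}$, and the real span of the rescaled lifts is honestly totally real: $V_{\R}\cap iV_{\R}=0$ follows from $\C$-independence of the lifts (which holds since $\A=0\neq\pm\tfrac{\pi}{2}$), and the real symmetric Gram matrix inherits signature $(2,1)$, producing the required totally geodesic copy of $\rh^2$ whose boundary contains the three points; your fallback via \propref{cai1} (transport to a standard real triple) is also legitimate, since the paper quotes that proposition from \cite{gold} in any case. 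Compared with Goldman's treatment, which normalizes the triple to standard position using transitivity of ${\rm SU}(3,1)$ on pairs of null points and computes the Hermitian triple product explicitly, your route through the single Gram matrix handles all three parts uniformly and coordinate-free: the range statement, the chain case, and the totally real case all fall out of one signature computation, at the modest cost of the case analysis of degenerate subspaces that the normalization approach sidesteps.
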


\begin{prop}\label{crvpr2}
 Let $z_1,~z_2, ~z_3,~z_4$ be distinct points of $\partial\ch^3$ and let $\X_1,~\X_2,~\X_3$ denote the cross-ratios defined by \ref{crveq1}. Suppose $\X_1$, $\X_2$ and $\X_3$ are non-real complex numbers. Let $\A_1=
\A(z_4,~z_3,~z_2)$ and $\A_2=\A(z_3,~z_2, z_1)$. Then 
\begin{enumerate}
 \item $\A_1 +\A_2=arg(\overline\X_1\X_2)$.
\item $\A_1 -\A_2=arg(\X_3)$.
\end{enumerate}
\end{prop}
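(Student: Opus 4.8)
The plan is to reduce both identities to elementary manipulations of the Hermitian products $\langle \z_i, \z_j\rangle$ and then to upgrade the resulting congruences modulo $2\pi$ to genuine equalities of angles, which is where the hypothesis enters. Fix lifts $\z_1,\dots,\z_4$ and abbreviate $h_{ij}=\langle \z_i,\z_j\rangle$. Since the form is Hermitian, $h_{ji}=\overline{h_{ij}}$, and as the four points are distinct null points, $h_{ij}\neq 0$ for $i\neq j$; in particular each product $h_{ij}h_{ji}=|h_{ij}|^2$ is a strictly positive real. The whole computation rests on the fact that such positive real factors contribute nothing to an argument.

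First I would rewrite all five quantities in this notation. Straight from the definitions,
$$\X_1=\frac{h_{31}h_{42}}{h_{41}h_{32}},\qquad \X_2=\frac{h_{21}h_{43}}{h_{41}h_{23}},\qquad \X_3=\frac{h_{12}h_{43}}{h_{42}h_{13}},$$
while $\A_1=\arg P_1$ and $\A_2=\arg P_2$ with $P_1=-h_{43}h_{32}h_{24}$ and $P_2=-h_{32}h_{21}h_{13}$. Next I would form the two combinations appearing on the right-hand sides. For the sum the two minus signs cancel, and using $\overline{\X_1}=h_{13}h_{24}/(h_{14}h_{23})$ together with the positivity of $h_{32}h_{23}$ and $h_{14}h_{41}$ one gets
$$P_1P_2=h_{32}^2\,h_{13}h_{21}h_{24}h_{43}=|h_{32}|^4\,|h_{14}|^2\;\overline{\X_1}\,\X_2.$$
For the difference the minus signs cancel in the quotient, giving
$$\frac{P_1}{P_2}=\frac{h_{43}h_{24}}{h_{21}h_{13}}=\frac{|h_{24}|^2}{|h_{12}|^2}\,\X_3.$$
Since the scalar factors $|h_{32}|^4|h_{14}|^2$ and $|h_{24}|^2/|h_{12}|^2$ are positive reals, taking arguments yields $\A_1+\A_2\equiv\arg(\overline{\X_1}\X_2)$ and $\A_1-\A_2\equiv\arg(\X_3)$, both modulo $2\pi$.

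The final step, and the only genuine subtlety, is to remove the $2\pi$ ambiguity. By \propref{cra} each Cartan invariant lies in $[-\tfrac{\pi}{2},\tfrac{\pi}{2}]$, so $\A_1\pm\A_2\in[-\pi,\pi]$, whereas the principal argument lies in $(-\pi,\pi]$; hence the only possible obstruction is the endpoint $\pm\pi$. Now $\A_1\pm\A_2=\pm\pi$ can occur only when $\A_1$ and $\A_2$ are \emph{both} extremal, i.e. when both triples $\{z_2,z_3,z_4\}$ and $\{z_1,z_2,z_3\}$ lie on chains by \propref{cra}. As these triples share the two points $z_2,z_3$, and two distinct boundary points determine a unique chain, this would place all four points on one common chain, which forces all three cross-ratios to be real and contradicts the standing hypothesis that $\X_1,\X_2,\X_3$ are non-real. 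Therefore the endpoints are excluded, the congruences become equalities, and both (i) and (ii) follow. I expect the algebraic identities above to be routine once the index bookkeeping is done carefully; the part that requires care is precisely this endpoint analysis, where the non-realness assumption is indispensable.
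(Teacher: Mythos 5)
Your proof is correct, but it does not follow the paper's route, because the paper gives no self-contained proof of \propref{crvpr2} at all: it simply asserts that when all three cross-ratios are non-real, ``the argument as in the proof of \cite[Proposition 5.8]{pp} goes through,'' and adds a remark locating the obstruction in the real case at the endpoints of $[-\pi,\pi]$ versus the principal branch $(-\pi,\pi]$ of the argument. Your write-up in effect supplies the details the paper delegates, and does so without the normalization $z_2=\infty$, $z_3=o$ that underlies the Parker--Platis computation: working directly with the Hermitian products $h_{ij}$, your identities check out --- $P_1P_2=h_{32}^2h_{23}^2\,h_{14}h_{41}\,\overline{\X}_1\X_2=|h_{23}|^4|h_{14}|^2\,\overline{\X}_1\X_2$ and $P_1/P_2=\bigl(|h_{24}|^2/|h_{12}|^2\bigr)\X_3$ --- and the nonvanishing $h_{ij}\neq 0$ for distinct boundary points is legitimate since a form of signature $(3,1)$ admits no two-dimensional totally isotropic subspace. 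Your endpoint analysis is precisely the point the paper only gestures at, and you make it rigorous with the paper's own tools: $|\A_1\pm\A_2|=\pi$ forces both Cartan invariants to equal $\pm\pi/2$, so by \propref{cra} the triples $\{z_2,z_3,z_4\}$ and $\{z_1,z_2,z_3\}$ lie on chains; since two distinct boundary points span a unique copy of $\C^{1,1}$ and hence a unique chain, all four points lie on one chain, and \lemref{gl} then makes $\X_1,\X_2,\X_3$ real, contradicting the hypothesis. (Strictly, only the value $-\pi$ conflicts with the principal branch, so you exclude slightly more than necessary --- harmless.) What your approach buys is a complete, coordinate-free proof with the mod-$2\pi$ ambiguity resolved explicitly; what the paper's approach buys is brevity by citation, together with the pointer to the Cunha--Gusevskii counterexample \cite{gc} showing the statement genuinely fails when the cross-ratios are real, which your argument is consistent with, since that is exactly where your endpoint exclusion would break down.
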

Note that the above proposition is not true if $\X_i$'s are real numbers.  Cuhna-Gusevskii \cite[p.279]{gc} have given a counter-example to the above proposition when $\X_i$'s are real numbers.  
However, when all the cross-ratios are non-real, the argument as in the proof of \cite[Proposition 5.8]{pp} goes through and we have the above proposition. An explanation that the proof of \cite[Proposition 5.8]{pp} does not carry over to the real cross-ratio case is that the principal argument of complex numbers is a well-defined function from $\C-\{0\}$ to the semi-open interval $(-\pi, \pi]$. On the other hand, $\A_1 \pm \A_2$ are well-defined functions from distinct triple points on $\partial \ch^n$ onto the closed interval $[-\pi, \pi]$. So, the principal argument can not be identified with $\A_1 \pm \A_2$, especially on the boundary points of the intervals and those cases correspond when the cross-ratios are real numbers. 

\begin{prop}\label{crvpr3}
 Let $z_1,~z_2,~z_3,~z_4$ be distinct points of $\partial\ch^3$ with  non-real cross-ratios $\X_1,~\X_2,~\X_3$. Let $z_1^\prime,~z_2^\prime,~z_3^\prime,~z_4^\prime$
be another set of distinct points of $\partial\ch^3$ with corresponding cross-ratios $\X_1^ \prime,~\X_2^\prime,~\X_3^\prime$. If $\X_i^\prime=\X_i$ for 
$i=1,~2,~3$, then there exist $A\in {\rm SU }(3,1)$ such that $A(z_j)=z_j^\prime$ for $j=1,~2,~3,~4$.    
\end{prop}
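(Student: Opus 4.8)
The plan is to normalize both quadruples using the transitivity of ${\rm SU}(3,1)$ and then to recover the remaining point from the cross-ratios, using the angular invariant to fix arguments and the relations of \propref{crp} to fix moduli. First I would invoke the double transitivity of ${\rm SU}(3,1)$ on $\partial\ch^3$ (already used in the proof of \propref{crp}) to move $z_2, z_3$ to $\infty, o$ and simultaneously $z_2', z_3'$ to $\infty, o$; as the Koranyi--Riemann cross-ratios are ${\rm SU}(3,1)$-invariant they are unaffected, so it suffices to find an element of the pointwise stabilizer $\mathrm{Stab}(\infty, o)$ carrying the first normalized quadruple to the second. Since $\{\infty, o\}^{\perp}$ is the positive-definite plane spanned by $e_2, e_3$, this stabilizer (in ${\rm U}(3,1)$) consists of the block maps $\mathrm{diag}(\mu, U, \bar\mu^{-1})$ with $\mu\in\C^{\ast}$ and $U\in{\rm U}(2)$; because scalars act trivially on $\partial\ch^3$, I may work in ${\rm U}(3,1)$ and rescale the final map into ${\rm SU}(3,1)$ at the end.

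Next I would match $z_1$. By \propref{crvpr2} the non-real cross-ratios determine $\A_2=\A(z_3, z_2, z_1)$, so $\A(o, \infty, z_1)=\A(o, \infty, z_1')$, and by \propref{cai1} there is $B\in{\rm SU}(3,1)$ with $B(o)=o$, $B(\infty)=\infty$, $B(z_1)=z_1'$; automatically $B\in\mathrm{Stab}(\infty, o)$. Replacing the first quadruple by its $B$-image, I may assume $z_1=z_1'$, $z_2=z_2'=\infty$, $z_3=z_3'=o$, with the two quadruples still carrying identical cross-ratios. It then remains to match $z_4$ with $z_4'$ using the smaller group $R=\mathrm{Stab}(\infty, o, z_1)$.

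This last matching is where the real work lies. Choosing lifts with $\langle\z_1, \z_4\rangle=1$ and coordinates as in the proof of \propref{crp}, and normalizing $z_1$ so that its third coordinate vanishes, one checks that $R$ is projectively the one-parameter group $\mathrm{diag}(1,1,e^{i\omega},1)$ of rotations about the complex geodesic through $o, \infty, z_1$. Using $\X_1=\bar\xi_1\delta_4$, $\X_2=\xi_4\bar\delta_1$, $\X_3=\xi_4\delta_1/(\xi_1\delta_4)$ together with the pairing relation $\langle\z_4, \z_1\rangle=1$, a direct computation expresses the coordinates $\xi_4, \nu_2, \delta_4$ of $\z_4$ as explicit functions of $\X_1, \X_2, \X_3$ and of the fixed data of $z_1$, while the null condition $\langle\z_4, \z_4\rangle=0$ pins down $|\zeta_4|$ and leaves only the phase of the transverse coordinate $\zeta_4$ free. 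Since that phase is exactly the parameter of $R$, the points $z_4$ and $z_4'$, having identical cross-ratios, have identical normalized coordinates up to this phase, and hence differ by an element of $R$.

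Two points need care, and both are settled by the non-reality hypothesis. The modulus of $\zeta_4$ comes from $|\zeta_4|^2=-2\Re(\xi_4\bar\delta_4)-|\nu_2|^2$, whose non-negativity is precisely inequality \eqnref{cr2} of \propref{crp}, and the phase equation arising from $\X_3$ is consistent exactly because $|\X_2|=|\X_1||\X_3|$ by \eqnref{cr1}. Furthermore, the normalization is legitimate only in general position, i.e. when the relevant points do not lie on a common complex line; by \propref{crp} the inequality \eqnref{cr2} is strict precisely then, and by \propref{cra} points on a common chain have real cross-ratios, so non-reality of $\X_1, \X_2, \X_3$ guarantees $\zeta_4\neq0$ and that $R$ acts nontrivially, accounting for the single residual degree of freedom. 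Composing $B$, the resulting element of $R$, and the two normalizing maps, and rescaling into ${\rm SU}(3,1)$, then yields the required $A$ with $A(z_j)=z_j'$ for $j=1,2,3,4$.
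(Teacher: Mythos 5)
Your first half runs parallel to the paper's proof: double transitivity to normalize $z_2=z_2'=\infty$, $z_3=z_3'=o$, then \propref{crvpr2} (the one place where non-reality is genuinely needed) together with \propref{cai1} to match $z_1$ with $z_1'$ by an element of the stabilizer of $(\infty,o)$. Your endgame diverges from the paper's: you reconstruct the coordinates of $\z_4$ explicitly from $\X_1,\X_2$ and absorb the leftover phase into the stabilizer of three points, whereas the paper matches both angular invariants $\A_1,\A_2$, obtains two block maps with the same $\lambda$, and then finds a single $U\in{\rm U}(2)$ from the matched Gram data \eqnref{crv2}--\eqnref{crv4}. That alternative route is legitimate in general position, but your treatment of the degenerate configurations contains a genuine gap. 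Non-reality of $\X_1,\X_2,\X_3$ does \emph{not} force the general position you need: \propref{cra} says nothing about cross-ratios, and the reality statement (\lemref{gl}) requires all \emph{four} points to lie on a chain --- three points on a chain force nothing. Concretely, take $z_2=\infty$, $z_3=o$, $\z_1=(1,0,0,i)^{T}$, $\z_4=(1,\sqrt{2},0,-1+i)^{T}$: both are null, $z_1$ lies on the chain through $\infty$ and $o$ (its transverse part vanishes), and one computes $\X_1=1-i$, $\X_2=i$, $\X_3=(1-i)/2$, all non-real; moreover \eqnref{cr2} holds with \emph{equality} here. So your claims that non-reality guarantees $\eta_1\neq 0$, strictness of \eqnref{cr2}, and $\zeta_4\neq 0$ are all false. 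In this configuration your formula $\nu_2=(1-\X_1-\X_2)/\bar\eta_1$ is undefined, and $R=\mathrm{Stab}(\infty,o,z_1)$ is not the circle $\mathrm{diag}(1,1,e^{i\omega},1)$ but the full projective block $\{\mathrm{diag}(\mu,U,\bar\mu^{-1}) : |\mu|=1,\ U\in{\rm U}(2)\}$; the step ``match $z_4$ by rotating the phase of $\zeta_4$'' breaks down exactly where you asserted it could not. (Incidentally, even in general position the fixed locus of your circle group is the $\C^2$-plane containing $o,\infty,z_1$, not a complex geodesic.)

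The gap is repairable, and the repair essentially recreates the paper's case analysis. When $\eta_1=\alpha_1=0$, the normalization $\langle\z_4,\z_1\rangle=1$ forces $\X_1+\X_2=1$ for both quadruples, the coordinates $\xi_4,\delta_4$ are still determined by $\X_1,\X_2$ and the data of $z_1$, the null condition then equates $|\nu_2|^2+|\zeta_4|^2$ with $|\nu_2'|^2+|\zeta_4'|^2$, and the ${\rm U}(2)$ block of the now larger stabilizer acts transitively on spheres in the transverse $\C^2$, so $z_4$ can still be matched. (Note also that $\zeta_4\neq 0$ is never actually needed: if $|\zeta_4|=|\zeta_4'|=0$ the identity rotation works.) This is precisely the service the paper's argument performs automatically: its reduction to a single $U\in{\rm U}(2)$ preserving the norms and inner product of the transverse vectors $\bf{y_1},\bf{y_4}$ includes an explicit treatment of the linearly dependent case, which covers your degenerate configuration without any general-position hypothesis. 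Until you add this missing case, your proof is incomplete as written.
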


\begin{proof}
 Since ${\rm SU }(3,1)$ acts doubly transitively on $\partial\ch^3$, we may assume without loss of generality that $z_2=z_2^\prime=\infty,~  z_3=z_3^\prime=o$. We write the lifts of 
other points as 

$$\z_1=\left[\begin{array}{c}
             \xi_1\\
             \eta_1\\
             \alpha_1\\   
             \delta_1\\ 
            \end{array}\right], 
~\z_4=\left[\begin{array}{c}
             \xi_4\\
             \eta_4\\
             \alpha_4\\   
             \delta_4\\ 
            \end{array}\right].
~\z_1^\prime=\left[\begin{array}{c}
             \xi_1^\prime\\
             \eta_1^\prime\\
             \alpha_1^\prime\\   
             \delta_1^\prime\\ 
            \end{array}\right],
~\z_4^\prime=\left[\begin{array}{c}
             \xi_4^\prime\\
             \eta_4^\prime\\
             \alpha_4^\prime\\   
             \delta_4^\prime\\ 
            \end{array}\right].$$
We may suppose that lifts of these points are chosen so that $\langle \z_4,\z_1 \rangle=\langle \z_4^\prime,\z_1^\prime \rangle$, i.e $$\bar\xi_1\delta_4+\xi_4\bar\delta_1+\eta_4\bar\eta_1+\alpha_4\bar\alpha_1=\bar\xi_1^\prime\delta_4^\prime+\xi_4^\prime\bar\delta_1^\prime
+\eta_4^\prime\bar\eta_1^\prime+\alpha_4^\prime\bar\alpha_1^\prime.$$
Then our condition on the cross-ratios gives :
\begin{align*}
\bar\xi_1\delta_4&=\bar\xi_1^\prime\delta_4^\prime, \\ 
\xi_4\bar\delta_1&=\xi_4^\prime\bar\delta_1^\prime, \\
\frac{\xi_4\delta_1}{\xi_1\delta_4}&=\frac{\xi_4^\prime\delta_1^\prime}{\xi_1^\prime\delta_4^\prime}.
\end{align*}
Hence we also have
\begin{equation}\label{crv1}
 \eta_4\bar\eta_1+\alpha_4\bar\alpha_1=\eta_4^\prime\bar\eta_1^\prime+\alpha_4^\prime
\alpha_1^\prime.
\end{equation}
Let us denote the angular invariants of the points by $\A_1=\A(z_4,~z_3,~z_2),~\A_2=\A(z_3,~z_2,~z_1)$,  $\A_1^\prime=\A(z_4^\prime,~z_3^\prime,~z_2^\prime)
,~\A_2^\prime=\A(z_3^\prime,~z_2^\prime,~z_1^\prime)$. Using \propref{crvpr2}, we see that $\A_1+\A_2=\A_1^\prime+\A_2^\prime$ 
and $\A_1-\A_2=\A_1^\prime-\A_2^\prime$. 
Hence $~\A_1=\A_1^\prime$ and $\A_2=\A_2^\prime$. From \propref{cai1}, we see that there exist $A_1,~A_2\in {\rm SU }(3,1)$ such that
 $A_1(z_2)=z_2^\prime,A_1(z_3)=z_3^\prime,~A_1(z_4)=z_4^\prime$ and $A_2(z_1)=z_1^\prime,~A_2(z_2)=z_2^\prime,~A_2(z_3)=z_3^\prime$.\\
Because $A_1$ fixes $z_2=\bf\infty$ and $z_3=0$, it is of form
$$\left[\begin{array}{ccc}
       \lambda &~ & ~\\
  ~ & U_1 &~\\
 ~& ~ &\bar\lambda^{-1} \\
      \end{array}\right],$$
where, $|\lambda|\neq 1$ and $U_1\in {\rm U}(2)$. Hence we have
 $\lambda\xi_4=\xi_4^\prime,~\bar\lambda^{-1}\delta_4=\delta_4^\prime$ and
 \hbox{$U_1\left[\begin{array}{c}
             \eta_4\\
             \alpha_4\\
            \end{array}\right]=
\left[\begin{array}{c}
             \eta_4^\prime\\
             \alpha_4^\prime\\
            \end{array}\right]$.}
Therefore
\begin{align*}
\xi_1^\prime&=\frac{\bar\delta_4}{\bar\delta_4^\prime}\xi_1\\
&=\lambda\xi_1,\\
\delta_1^\prime&=\frac{\bar\xi_4}{\bar\xi_4^\prime}\delta_1\\
&=\bar\lambda^{-1}\delta_1.
\end{align*}
Hence $A_2$ is of form
$$\left[\begin{array}{ccc}
       \lambda &~ & ~\\
  ~ & U_2 &~\\
 ~& ~ &\bar\lambda^{-1} \\
      \end{array}\right],$$ 
where, $U_2\in {\rm U}(2)$ so that $$U_2\left[\begin{array}{c}
             \eta_1\\
             \alpha_1\\
            \end{array}\right]=
\left[\begin{array}{c}
             \eta_1^\prime\\
             \alpha_1^\prime\\
            \end{array}\right].$$
It is enough to prove that there exist $U\in {\rm U}(2)$ such that
$$U\left[\begin{array}{c}
             \eta_4\\
             \alpha_4\\
            \end{array}\right]=
\left[\begin{array}{c}
             \eta_4^\prime\\
             \alpha_4^\prime\\
            \end{array}\right]  \hbox{ and }
U\left[\begin{array}{c}
             \eta_1\\
             \alpha_1\\
            \end{array}\right]=
\left[\begin{array}{c}
             \eta_1^\prime\\
             \alpha_1^\prime\\
            \end{array}\right].$$

Suppose,   $\bf{y_1}=\left[\begin{array}{c}
             \eta_1\\
             \alpha_1\\
            \end{array}\right],~\bf{y_4}=\left[\begin{array}{c}
             \eta_4\\
             \alpha_4\\
            \end{array}\right],~\bf{y_1}^\prime=\left[\begin{array}{c}
             \eta_1^\prime\\
             \alpha_1^\prime\\
            \end{array}\right],
~\bf{y_4}^\prime=\left[\begin{array}{c}
             \eta_4^\prime\\
             \alpha_4^\prime\\
            \end{array}\right].$\\
From \eqnref{crv1}, we have 
\begin{equation}\label{crv2}
 \ll \bf{y_4},~\bf{y_1}\gg=\ll \bf{y_4}^\prime,~\bf{y_1}^\prime\gg,
\end{equation}
where, $\ll .,. \gg$ is the standard positive-definite Hermitian form on $\C^2$. Also we have $U_1 \bf{y_4}=\bf{y_4}^\prime$ and $U_2\bf{y_1}=\bf{y_1}^\prime$. 
Then, $U_1,~U_2\in {\rm U}(2)$ implies,\\
\begin{equation}\label{crv3}
 \ll \bf{y_4},~\bf{y_4} \gg=\ll \bf{y_4}^\prime,~\bf{y_4}^\prime\gg,
\end{equation}
\begin{equation}\label{crv4}
 \ll \bf{y_1},~\bf{y_1} \gg = \ll \bf{y_1}^\prime,~\bf{y_1}^\prime\gg.\\
\end{equation}
Suppose, $\bf{y_1}$ and $\bf{y_4}$ are linearly independent over $\C$, thus, forming a basis of $\C^2$. Let $U$ be the $2\times 2$ matrix so that
$U\bf{y_1}=\bf{y_1}^\prime$ and $U\bf{y_4}=\bf{y_4}^\prime$. Then from  \eqnref{crv2} -- \eqnref{crv4},  it follows that $U$ preserves the Hermitian form $\ll.,.\gg$ 
on $\C^2$, so $U\in {\rm U}(2)$ and we are done.

Now, consider the case when $\bf{y_1}$ and $\bf{y_4}$ are linearly dependent over $\C$, i.e., $\bf{y_4}=\mu\bf{y_1}$ for some $\mu\in\C$. Since the form
$\ll .,.\gg$ is positive definite, using \ref{crv2}$-$\ref{crv4}, this is true if and only if 
\begin{align*}
&\ll \bf{y_4}-\mu\bf{y_1},\bf{y_4}-\mu\bf{y_1} \gg=0\\
&\Leftrightarrow ~\ll \bf{y_4}^\prime-\mu\bf{y_1}^\prime,\bf{y_4}^\prime-\mu\bf{y_1}^\prime \gg=0\\
&\Leftrightarrow ~\bf{y_4}^\prime=\mu\bf{y_1}^\prime.
\end{align*}
Therefore, either of $U_1$ and $U_2$ works. This completes the proof.    
\end{proof}

\subsubsection{When cross-ratios are all real}  Suppose, all the three cross-ratios are real. Then \eqnref{cr1} implies $\X_3=\pm \X_2/\X_1$. The following result can be proved along the same line as in the proof of \cite[Proposition 5.12]{pp}. 
\begin{lemma}
Suppose $\X_1$, $\X_2$ and $\X_3$ are all real. 
\begin{enumerate}
\item If $\X_3=-\X_2/\X_1$, then the points $z_j$ all lie on a chain.
\item If $\X_3=\X_2/\X_1$, then the points $z_j$ all lie in a totally real Lagrangian subspace. 
\end{enumerate}
\end{lemma}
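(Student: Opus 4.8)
The plan is to normalize by the double transitivity of ${\rm SU}(3,1)$ on $\partial\ch^3$, taking $z_2=\infty$ and $z_3=o$, and to reuse the coordinate computation already carried out in the proof of \propref{crp}. Writing the lifts as $\z_1=(\xi_1,\eta_1,\alpha_1,\delta_1)^{T}$ and $\z_4=(\xi_4,\nu_2,\zeta_4,\delta_4)^{T}$ with $\langle\z_4,\z_1\rangle=1$, one has
$$\X_1=\bar\xi_1\delta_4,\qquad \X_2=\xi_4\bar\delta_1,\qquad \X_3=\frac{\xi_4\delta_1}{\xi_1\delta_4}.$$
Since the four points are distinct, each of $\xi_1=\langle\z_1,\z_3\rangle$, $\delta_1=\langle\z_1,\z_2\rangle$, $\xi_4=\langle\z_4,\z_3\rangle$, $\delta_4=\langle\z_4,\z_2\rangle$ is nonzero (a vanishing pairing of two distinct null vectors would produce an isotropic plane), so the cross-ratios are nonzero and the expressions below are legitimate.

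Next I would introduce $u=\xi_1\bar\delta_1$ and $v=\xi_4\bar\delta_4$, which control $z_1$ and $z_4$ through the null relations
$$2\,\Re(u)+|\eta_1|^2+|\alpha_1|^2=0,\qquad 2\,\Re(v)+|\nu_2|^2+|\zeta_4|^2=0$$
coming from $\langle\z_1,\z_1\rangle=\langle\z_4,\z_4\rangle=0$. A direct computation gives $\X_1\X_3/\X_2=\bar u/u$ and $\X_2\X_3/\X_1=(\xi_4/\delta_4)^2\,|\delta_1|^2/|\xi_1|^2$. When $\X_1,\X_2,\X_3$ are all real, the first identity forces $\bar u/u=\X_3/(\X_2/\X_1)=\pm1$, so $u$ is either real or purely imaginary, with the sign $\epsilon$ in $\X_3=\epsilon\,\X_2/\X_1$ equal to $+1$ exactly when $u\in\R$ and $-1$ exactly when $u\in i\R$; the second identity forces $(\xi_4/\delta_4)^2\in\R$, whence $v\in\R\cup i\R$. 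The key linking step is that
$$\frac{\X_1\X_3}{\X_2}\cdot\frac{\X_2\X_3}{\X_1}=\X_3^2>0,$$
so the two real numbers $\X_1\X_3/\X_2$ and $\X_2\X_3/\X_1$ share a sign; consequently $u$ and $v$ are simultaneously real or simultaneously purely imaginary, both governed by $\epsilon$. (This sidesteps \propref{crvpr2}, which is exactly the tool that fails in the real case.)

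With this dichotomy the chain case is immediate. If $\X_3=-\X_2/\X_1$, then $\epsilon=-1$, so $u,v\in i\R$, hence $\Re(u)=\Re(v)=0$ and the null relations give $\eta_1=\alpha_1=\nu_2=\zeta_4=0$. Thus $\z_1,\z_4\in{\rm span}_\C(e_1,e_4)$, which together with $z_2=\infty$, $z_3=o$ puts all four points on the unique chain through $\infty$ and $o$, namely the boundary of the complex geodesic $\P({\rm span}_\C(e_1,e_4))$.

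The Lagrangian case $\X_3=+\X_2/\X_1$ is the one I expect to demand the most care. Here $u,v\in\R$ and, with $\X_1=\bar\xi_1\delta_4\in\R$ and $\X_2=\xi_4\bar\delta_1\in\R$, the four nonzero scalars $\xi_1,\delta_1,\xi_4,\delta_4$ have a common argument modulo $\pi$; rescaling the lifts $\z_1,\z_4$ by a single common unit phase (preserving $\langle\z_4,\z_1\rangle=1$) makes all four real. It then remains to make the middle coordinates real. Setting $\y_1=(\eta_1,\alpha_1)^{T}$ and $\y_4=(\nu_2,\zeta_4)^{T}$ in $\C^2$, the null relations and the normalization (which reads $\ll\y_4,\y_1\gg=1-\X_1-\X_2\in\R$) show that the Hermitian Gram matrix of $(\y_1,\y_4)$ equals
$$\begin{pmatrix} -2u & 1-\X_1-\X_2 \\ 1-\X_1-\X_2 & -2v \end{pmatrix},$$
a real symmetric (and, being a genuine Gram matrix, positive semi-definite) matrix. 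Since any two pairs of vectors in $\C^2$ with the same Hermitian Gram matrix are related by an element of ${\rm U}(2)$, I would apply the residual ${\rm U}(2)$ in the stabilizer of $\{\infty,o\}$, acting on the middle two coordinates, to rotate $\y_1,\y_4$ to real vectors. After this all coordinates of $\z_1,\z_2,\z_3,\z_4$ are real, so the four points lie in $\P(\R^4)$, the standard totally real totally geodesic subspace. The main obstacle is precisely this final point — producing a single common Lagrangian rather than separate ones for each triple — and it is resolved by observing that the realness of every cross-ratio makes the entire Gram matrix real, so the whole configuration is unitarily equivalent to a real one.
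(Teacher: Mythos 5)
Your proof is correct, and it is worth recording how it sits relative to the paper: the paper does not actually write out an argument for this lemma, but instead defers to Parker--Platis (their Proposition 5.12), so your write-up supplies details the paper leaves to the literature. Your normalization ($z_2=\infty$, $z_3=o$, lifts with $\langle\z_4,\z_1\rangle=1$) is exactly the setup of the paper's proofs of \propref{crp} and \propref{crvpr3}, but your mechanism is different and arguably cleaner at the one point where care is needed. The identities $\X_1\X_3/\X_2=\bar u/u$ and $\X_2\X_3/\X_1=(\xi_4/\delta_4)^2\,|\delta_1|^2/|\xi_1|^2$, tied together by the trivially positive product $(\X_1\X_3/\X_2)(\X_2\X_3/\X_1)=\X_3^2$, let you read off directly from the sign $\epsilon$ in $\X_3=\epsilon\,\X_2/\X_1$ that $u=\xi_1\bar\delta_1$ and $v=\xi_4\bar\delta_4$ are \emph{simultaneously} real or \emph{simultaneously} purely imaginary; since $u,v\neq 0$ (a vanishing pairing of distinct null lines would give a totally isotropic plane, impossible in signature $(3,1)$), the dichotomy is genuine. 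This deliberately avoids the angular-invariant route of \propref{crvpr2}, which, as the paper itself emphasizes via the Cunha--Gusevskii counterexample, is precisely what fails when the cross ratios are real -- so your sign-linking step is the right replacement for it. The chain case then drops out of the null relations ($\Re(u)=\Re(v)=0$ forces $\eta_1=\alpha_1=\nu_2=\zeta_4=0$), and your Lagrangian case reuses the residual ${\rm U}(2)$/Gram-matrix device from the paper's proof of \propref{crvpr3}: reality of all cross ratios makes the Gram data real, a real symmetric positive semi-definite $2\times 2$ matrix is the Gram matrix of a pair of vectors in $\R^2$, and a unitary in the stabilizer of $\{\infty,o\}$ carries $\y_1,\y_4$ onto such a real pair. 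Two cosmetic remarks only: the final unitary need only lie in ${\rm U}(3,1)$ (the determinant normalization is irrelevant for mapping totally geodesic subspaces), and the conclusion should strictly be phrased as saying the original points lie in the preimage under this isometry of $\P(\R^4)\cap\partial\ch^3$, which is again the boundary of a totally real totally geodesic subspace; neither affects correctness.
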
 
The following result follows from \cite[p.225]{gold}. 
\begin{lemma}\label{gl}
Suppose $z_1$, $z_2$, $z_3$ and $z_4$ all lie on the same chain. Then $\X_1$, $\X_2$ and $\X_3$ are each real.  
\end{lemma}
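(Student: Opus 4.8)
The plan is to exploit the ${\rm SU}(3,1)$-invariance of the Koranyi-Riemann cross-ratio to reduce the claim to a single, explicitly parametrized chain, on which the computation of $\X_1,~\X_2,~\X_3$ becomes transparent. Since ${\rm SU}(3,1)$ preserves the Hermitian form $\langle\cdot,\cdot\rangle$, each cross ratio $[z_1,z_2,z_3,z_4]$ is invariant under the diagonal action of ${\rm SU}(3,1)$ on quadruples, and it is independent of the chosen lifts. A chain is by definition the boundary of a complex geodesic, i.e. the projectivization of a $\C^{1,1}$-subspace of $\C^{3,1}$, and ${\rm SU}(3,1)$ acts transitively on such subspaces (this is standard, following from Witt's extension theorem; see \cite{gold}). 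Hence I may assume that the common chain is the boundary of the standard complex geodesic $\P({\rm span}_{\C}\{e_1,e_4\})$, which contains $o$ and $\infty$.

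The key step is to parametrize this chain. The null vectors in ${\rm span}_{\C}\{e_1,e_4\}$ are, up to scale, the vectors $\z_\tau={}^t(i\tau,0,0,1)$ for $\tau\in\R$ together with $\infty={}^t(1,0,0,0)$; indeed $\langle (a,0,0,b),(a,0,0,b)\rangle=2\Re(a\bar b)$, which vanishes precisely for these lifts. A direct computation with the form then gives, for the finite points, $\langle\z_\sigma,\z_\tau\rangle=i(\sigma-\tau)$, so \emph{every} pairwise product of finite points on the chain is purely imaginary. I would record separately that with these particular lifts $\langle\z_\sigma,\infty\rangle=\langle\infty,\z_\sigma\rangle=1$ is real.

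It then remains to substitute these values into the three cross ratios of \eqnref{crveq1}. The combinatorics of the definition is such that, in each of $\X_1$, $\X_2$, $\X_3$, every one of the four points appears in exactly one factor of the numerator and exactly one factor of the denominator. When all four points are finite, the numerator and denominator are each a product of two purely imaginary numbers, hence real, so the ratio is real. When one of the points is $\infty$, its two occurrences contribute the real value $1$ while the remaining two factors are purely imaginary, so the cross ratio takes the form $(\text{real}\times\text{imaginary})/(\text{real}\times\text{imaginary})$, which is again real. In every case $\X_1,~\X_2,~\X_3\in\R$, as claimed.

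I do not expect a genuine obstacle here: the content is essentially the classical fact that four concyclic points have real cross-ratio, transported to $\partial\ch^3$ via the identification of the complex geodesic with $\ch^1$. The only points demanding care are the justification that ${\rm SU}(3,1)$ acts transitively on $\C^{1,1}$-subspaces (legitimizing the reduction to the standard chain) and the bookkeeping for the point $\infty$, whose Hermitian products are real rather than purely imaginary. One could alternatively invoke the triple transitivity of the stabilizer $\cong{\rm PU}(1,1)$ of the chain acting on its boundary circle to move all four points off $\infty$, thereby eliminating that case altogether.
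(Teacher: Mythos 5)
Your proof is correct, but it is genuinely more than the paper offers: the paper does not prove this lemma at all, it simply cites \cite[p.225]{gold}, where the statement is classical (a chain is the boundary of a complex geodesic, on which the Koranyi--Riemann cross-ratio reduces to the ordinary cross-ratio of four points on a circle in $\C P^1$, hence is real). Your normalization and bookkeeping check out: by Witt extension you may take the chain to be $\partial\P({\rm span}_{\C}\{e_1,e_4\})$, and with the paper's form one indeed gets lifts $\z_\tau={}^t(i\tau,0,0,1)$ with $\langle\z_\sigma,\z_\tau\rangle=i(\sigma-\tau)$ and $\langle\z_\sigma,\infty\rangle=1$; since each of the four points occurs exactly once in the numerator and once in the denominator of each $\X_k$, the parity count (two purely imaginary factors, or one real pair from $\infty$ against one imaginary pair) forces $\X_1,\X_2,\X_3\in\R$, and distinctness of the points keeps all denominators nonzero. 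What your computation buys is a self-contained verification in the paper's own coordinates; what the citation buys is brevity and the conceptual identification with the classical concyclicity fact, which you also note. One remark worth making: there is a slicker argument exactly parallel to the paper's proof of the companion lemma for totally real subspaces, namely the anti-holomorphic isometry $\z\mapsto M\bar\z$ with $M={\rm diag}(-1,1,1,1)$ fixes every point of your standard chain (it sends $\z_\tau\mapsto\z_\tau$ and preserves $\infty$) while conjugating each cross-ratio, so $\X_k=\overline{\X}_k$ at once; this would have unified the two lemmas under a single involution argument and also disposes of your separate $\infty$ case without the ${\rm PU}(1,1)$ triple-transitivity remark.
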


\begin{lemma}
If $z_1$, $z_2$, $z_3$, $z_4$ are contained in the same totally real totally geodesic subspace, then $\X_1$, $\X_2$. $\X_3$ are real numbers. 
\end{lemma}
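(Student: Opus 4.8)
The plan is to exploit the fact that a totally real totally geodesic subspace is the fixed-point locus of an antiholomorphic involution, together with the fact that a cross-ratio may be computed from any lifts: choosing lifts that are invariant under this involution forces every relevant Hermitian product to be real.

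First I would recall the structure of such subspaces. A totally real totally geodesic subspace $L$ of $\ch^3$ is the fixed-point set of an antiholomorphic isometric involution of $\ch^3$, which lifts to a conjugate-linear map $\iota\colon\C^{3,1}\to\C^{3,1}$ with $\iota^2=\mathrm{id}$ and satisfying the compatibility $\langle\iota\z,\iota\w\rangle=\overline{\langle\z,\w\rangle}$ for all $\z,\w$. The model case is the standard real form: since the matrix $H$ of the Hermitian form has real entries, complex conjugation $\z\mapsto\bar\z$ satisfies this compatibility and its fixed locus is the standard copy of $\rh^3$; every totally real totally geodesic subspace arises from this one by an element of ${\rm SU}(3,1)$, with $\iota$ the corresponding conjugate of $\z\mapsto\bar\z$.

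Next I would produce $\iota$-invariant lifts. For a boundary point $z_k\in L$ we have $\iota z_k=z_k$ projectively, so any lift $\z_k$ satisfies $\iota\z_k=\mu_k\z_k$ for some $\mu_k\in\C^{\ast}$; applying $\iota$ again and using $\iota^2=\mathrm{id}$ gives $|\mu_k|^2=1$. Writing $\mu_k=e^{i\theta_k}$ and replacing $\z_k$ by $e^{i\theta_k/2}\z_k$ yields a lift with $\iota\z_k=\z_k$. Carrying this out for $k=1,2,3,4$, the compatibility relation gives $\langle\z_k,\z_l\rangle=\langle\iota\z_k,\iota\z_l\rangle=\overline{\langle\z_k,\z_l\rangle}$, so every Hermitian product $\langle\z_k,\z_l\rangle$ is a real number.

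Finally, each of the three cross-ratios $\X_1,\X_2,\X_3$ of \eqnref{crveq1} is, by the definition \eqnref{ecr}, a quotient of a product of two such Hermitian products by a product of two more; with the invariant lifts above both numerator and denominator are real, hence each $\X_k$ is real, and since the value of a cross-ratio is independent of the chosen lifts this proves the claim. The main point requiring care, and the only genuine obstacle, is the first step: establishing that $L$ is cut out by a conjugate-linear involution with the stated compatibility, i.e. the structure theory of the real forms of the Hermitian space; once this is in hand the remainder is a short formal computation. Equivalently, one may bypass the abstract involution and instead normalize, using the ${\rm SU}(3,1)$-invariance of the cross-ratios, so that $L$ is the standard real subspace and the $z_k$ have real lifts; then $\langle\z_k,\z_l\rangle=\z_l^{\ast}H\z_k$ is real because $H$ and the $\z_k$ are real, and the same conclusion follows.
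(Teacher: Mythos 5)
Your proposal is correct and follows essentially the same route as the paper, which likewise invokes the antiholomorphic involution $\iota$ fixing the totally real totally geodesic subspace and concludes $\X_i=\overline{\X}_i$ by applying $\iota$ to the four points. Your construction of $\iota$-invariant lifts (and the alternative normalization to the standard real form) simply makes explicit the step the paper leaves implicit, namely that applying $\iota$ conjugates each Hermitian product and hence each cross-ratio.
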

\begin{proof}
Let $\iota$ be the anti-holomorphic involution fixing the totally real totally geodesic subspace. Then for $i=1,2,3$, applying $\iota$ we get $\X_i=\overline \X_i$. Hence all the cross-ratios are real. 
\end{proof} 

Summarizing the above lemmas we have the following.
\begin{prop}\label{inva}
Let $z_1$, $z_2$, $z_3$, $z_4$ be distinct points on $\partial \ch^3$. Then the cross-ratios $\X_1$, $\X_2$ and $\X_3$ are real numbers if and only if $z_1$, $z_2$, $z_3$, $z_4$ all lie on the same chain or the same totally  real totally geodesic subspace. 
\end{prop}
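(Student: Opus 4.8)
The plan is to obtain this proposition by assembling the three preceding lemmas, which already establish each implication on the nose, together with the algebraic constraint \eqnref{cr1} from \propref{crp} that splits the hypothesis ``$\X_1, \X_2, \X_3$ all real'' into the two admissible geometric cases.

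First I would dispose of the ``if'' direction, which requires no new work. If $z_1, z_2, z_3, z_4$ all lie on the same chain, then \lemref{gl} already asserts that $\X_1, \X_2, \X_3$ are each real. If instead the four points lie on a common totally real totally geodesic subspace, then the preceding lemma applies: the anti-holomorphic involution $\iota$ fixing that subspace yields $\X_i = \overline{\X_i}$ for $i = 1,2,3$, so all three cross ratios are again real. This settles the reverse implication.

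For the forward implication, suppose $\X_1, \X_2, \X_3 \in \R$. The relation $|\X_2| = |\X_1|\,|\X_3|$ from \eqnref{cr1} gives $|\X_3| = |\X_2|/|\X_1|$, and since all three numbers are real this forces $\X_3 = \X_2/\X_1$ or $\X_3 = -\X_2/\X_1$; because $\X_1 \neq 0$ for distinct points, these two cases are exhaustive. I would then invoke the earlier dichotomy lemma for real cross ratios: in the case $\X_3 = -\X_2/\X_1$ the four points lie on a common chain, while in the case $\X_3 = \X_2/\X_1$ they lie on a common totally real Lagrangian subspace, equivalently a totally real totally geodesic subspace. In either case the asserted conclusion holds.

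Since every ingredient is already established, there is essentially no real obstacle; the only point deserving care is to verify that the sign alternative extracted from \eqnref{cr1} is genuinely exhaustive (which rests on $\X_1 \neq 0$ for distinct boundary points) so that the two geometric possibilities of the dichotomy lemma exactly exhaust all real configurations. I would also remark in passing that the two alternatives can coincide only in degenerate configurations where some triple of the $z_j$ lies on a common complex line, as governed by the equality case of \eqnref{cr2} in \propref{crp}; this possible overlap is harmless, since the statement only claims the inclusive ``or.''
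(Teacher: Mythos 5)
Your proposal is correct and follows the paper's own route exactly: the paper proves \propref{inva} by ``summarizing the above lemmas,'' i.e., the reverse direction from \lemref{gl} and the totally real lemma, and the forward direction from the dichotomy $\X_3=\pm\X_2/\X_1$ extracted from \eqnref{cr1} together with the real cross-ratio lemma. Your extra care about exhaustiveness of the sign alternative and the harmless overlap of the two cases is a sound, if unstated in the paper, observation.
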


\section{A sufficient condition for Irreducibility}\label{redu}

Let $A,~B$ be loxodromic elements in ${\rm{{\rm SU }}}(3,1)$ and following the notation of \secref{li}, let $$C_A=\left[\begin{array}{cccc}  \textbf{a}_A & \textbf{x}_A & \textbf{y}_A & \textbf{r}_A\\
                                                                                        \end{array}\right],~ C_B=\left[\begin{array}{cccc}
                                                                                         \textbf{a}_B & \textbf{x}_B & \textbf{y}_B & \textbf{r}_B\\
                                                                                        \end{array}\right]$$ be the eigen matrices associated with $A$  and $B$ respectively. The Kor\'anyi-Reimann cross-ratios of $A$ and $B$ are defined by \\
\begin{equation}
\X_1(A,~B)=[a_B,~a_A,~r_A,~r_B]= \frac{\langle {\bf r}_A,{\bf a}_B \rangle \langle {\bf r}_B,{\bf a}_A \rangle}
{\langle {\bf r}_B,{\bf a}_B \rangle \langle {\bf r}_A,{\bf a}_A \rangle},\\
\end{equation}
\begin{equation}
\X_2(A,~B)=[a_B,~r_A,~a_A,~r_B]= \frac{\langle {\bf a}_A,{\bf a}_B \rangle \langle {\bf r}_B,{\bf r}_A \rangle}
{\langle {\bf r}_B,{\bf a}_B \rangle \langle {\bf a}_A,{\bf r}_A \rangle},\\
\end{equation}
\begin{equation}
\X_3(A,~B)=[a_A,~r_A,~a_B,~r_B]= \frac{\langle {\bf a}_B,{\bf a}_A \rangle \langle {\bf r}_B,{\bf r}_A \rangle}
{\langle {\bf r}_B,{\bf a}_A \rangle \langle {\bf a}_B,{\bf r}_A \rangle}.\\ 
\end{equation}
 In \cite{gold} , Goldman defines $\eta$-invariant for a triple of points with two points on $\partial\ch^{3}$ and one point on $\P(V_{+})$.  Following Goldman's definition, we define
$\eta$-invariants associated to $A$ and $B$ as follows\\ 
\begin{equation*}
\eta_{1}(A,B)= \eta(a_A,r_A;x_B)=\frac{\langle {\bf a}_A,{\bf x}_B \rangle \langle {\bf x}_B,{\bf r}_A \rangle}
{\langle {\bf a}_A,{\bf r}_A \rangle \langle {\bf  x}_B, {\bf x}_B\rangle},\\
\end{equation*}

\begin{equation*}
\eta_{2}(A,B)= \eta(a_A,r_A;y_B)=\frac{\langle {\bf a}_A,{\bf y}_B \rangle \langle {\bf y}_B,{\bf r}_A \rangle}
{\langle {\bf a}_A,{\bf r}_A \rangle \langle {\bf y}_B, {\bf y}_B\rangle},\\
\end{equation*}
\begin{equation*}
\nu_{1}(A,B)= \eta(a_B,r_B;x_A)=\frac{\langle {\bf a}_B,{\bf x}_A \rangle \langle {\bf x}_A,{\bf r}_B \rangle}
{\langle {\bf a}_B,{\bf r}_B \rangle \langle {\bf x}_A, {\bf x}_A \rangle},\\
\end{equation*}
\begin{equation*}
\nu_{2}(A,B)= \eta(a_B,r_B;y_A)=\frac{\langle {\bf a}_B,{\bf y}_A \rangle \langle {\bf y}_A,{\bf r}_B \rangle}
{\langle {\bf a}_B,{\bf r}_B \rangle \langle {\bf y}_A, {\bf y}_A \rangle}.\\
\end{equation*}

We define 
\begin{equation*}
\zeta_o(A,B)=[y_A,~x_A,~x_B,~y_B]= \frac{\langle {\bf x}_B,{\bf y}_A \rangle \langle {\bf y}_B,{\bf x}_A \rangle}
{\langle {\bf x}_B,{\bf x}_A \rangle \langle {\bf y}_B,{\bf y}_A \rangle}.\\ 
\end{equation*}

It is clear from the definition that the $\X_{i}$'s, $\eta_{j}$'s and $\zeta_o$ are conjugacy invariants for the two generator subgroup
 $\langle A,B \rangle$ of ${\rm{{\rm SU }}}(3,1)$ and their values are independent of the chosen lifts of the eigenvectors. 
\begin{theorem}\label{redut}
Let $\langle A,B \rangle$ be a discrete, free subgroup of $\ {\rm  SU }(3,1)$ that is generated by two loxodromic elements $A$ and $B$. Then  $\langle A,B \rangle$ preserves  a $\C^2$-plane if and only if  one of the following holds.
\medskip \begin{enumerate}
\item[(i)]{ $\zeta_o =0$ and,  either $\eta_1(A, B)=0=\nu_1(A, B)$ or $\eta_2(A, B)=0=\nu_2(A, B)$. }
\medskip \item[(ii)]{$\zeta_o=\infty$ and,  either $\eta_1(A, B)=0=\nu_2(A, B)$ or $\eta_2(A, B)=0=\nu_1(A, B)$. }
\end{enumerate}
\end{theorem}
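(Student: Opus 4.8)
The plan is to reduce the statement to the existence of a common positive eigenvector and then to read off the invariants in a basis adapted to $A$. A $\C^2$-plane is the projectivisation of the orthogonal complement of its polar vector $c\in\P(V_+)$, and $\langle A,B\rangle$ preserves such a plane if and only if both $A$ and $B$ preserve the line $\langle c\rangle$; that is, if and only if $c$ is a common positive eigenvector of $A$ and $B$. Assuming, as the definition of the invariants requires, that the two positive eigenvalues of each generator are distinct, a loxodromic element has one-dimensional positive eigenspaces spanned by $\xa,\ya$ (respectively $\xb,\yb$), so this is equivalent to the coincidence up to scale of one of $\xa,\ya$ with one of $\xb,\yb$. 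There are exactly four such coincidences, and the goal is to match them to the four alternatives packaged in (i)--(ii).

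Next I would conjugate so that $A$ is diagonal, working in the eigenbasis $(\aa,\xa,\ya,\ra)=(e_1,e_2,e_3,e_4)$ in which the form is $H$, and set $M=C_A^{-1}C_B=[\ab,\xb,\yb,\rb]\in{\rm SU}(3,1)$; its columns then satisfy the Gram relations $\langle M_j,M_i\rangle=H_{ij}$, in particular $\langle\ab,\rb\rangle=1$. A direct computation expresses the invariants through the entries of $M$: one gets $\zeta_o=M_{32}M_{23}/(M_{22}M_{33})$, $\eta_1=\overline{M_{42}}\,M_{12}$, $\nu_1=M_{21}\overline{M_{24}}$, and the analogous formulas for $\eta_2,\nu_2$ with the index $3$ in place of $2$. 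In these terms the coincidence $\xa\parallel\xb$ is precisely the statement that the column $M_2$ is supported on the single index $2$, and likewise for the other three. The forward implication is then routine: substituting any one of the four single-index support conditions into these formulas kills the corresponding pair among $\eta_1,\nu_1$ (or $\eta_2,\nu_2$) and sends $\zeta_o$ to $0$ or to $\infty$, reproducing exactly (i) or (ii).

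For the converse I would treat the alternative $\zeta_o=0$, $\eta_1=0=\nu_1$ in detail, the remaining three following by the symmetries $e_2\leftrightarrow e_3$ (which swaps $\xa,\ya$, sends $\nu_1\mapsto\nu_2$ and $\zeta_o\mapsto\zeta_o^{-1}$, and matches the $\zeta_o=0$ cases with the $\zeta_o=\infty$ cases) and $A\leftrightarrow B$ (which swaps $\eta_i\leftrightarrow\nu_i$ and conjugates $\zeta_o$). Each invariant being a product, $\eta_1=0$ forces $M_{12}=0$ or $M_{42}=0$, $\nu_1=0$ forces $M_{21}=0$ or $M_{24}=0$, and $\zeta_o=0$ (with its denominator nonzero, so $M_{22}\neq0$, $M_{33}\neq0$) forces $M_{32}=0$ or $M_{23}=0$. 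The crux is that the branch $M_{23}=0$ with $M_{32}\neq0$ is incompatible with $M\in{\rm SU}(3,1)$: combining $M_{23}=0$ with $\eta_1=0$ and $\nu_1=0$ and feeding these into the orthogonality relations $\langle\ab,\xb\rangle=\langle\ab,\yb\rangle=0$ (respectively $\langle\rb,\xb\rangle=\langle\rb,\yb\rangle=0$), the norm relation $\langle\yb,\yb\rangle=1$, and the nullity $\langle\ab,\ab\rangle=0$, an elimination forces $\ab=0$ (respectively $\rb=0$), which is absurd. Hence $M_{32}=0$, and with $\langle\xb,\ya\rangle=0$ in hand the relation $\langle\ab,\xb\rangle=0$ or $\langle\rb,\xb\rangle=0$ together with $\eta_1=0$ collapses the column $M_2$ onto the index $2$, giving $\xa\parallel\xb$ -- unless instead a null eigenvector of $B$ aligns with $\aa$ or $\ra$, i.e.\ $A$ and $B$ share a boundary fixed point. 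The latter cannot occur, since a shared fixed point would place the free group $\langle A,B\rangle$ inside the amenable stabiliser of a point of $\partial\ch^3$. This yields the required common positive eigenvector, and the equivalence follows.

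I expect the genuine difficulty to be concentrated in the crux elimination of the previous paragraph: ruling out the $M_{23}=0$ branch uses the full system of Gram relations among all four eigenvectors of $B$ simultaneously, not merely the three vanishing hypotheses, and one must carefully isolate the honest coincidence from the shared-fixed-point degeneracy that freeness (non-amenability of a boundary stabiliser) alone excludes. The bookkeeping of which factor of each product vanishes, organised by the two involutions above, is what turns the single computed case into the full biconditional.
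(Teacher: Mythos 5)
Your overall route is the same as the paper's: the paper also reduces preservation of a $\C^2$-plane to a common positive eigenvector (its cases (a)--(d)), and its sufficiency argument is exactly your column collapse in different clothing --- it expands $\xa$ in the eigenbasis $\{\ab,\xb,\yb,\rb\}$, kills two coefficients using the vanishing hypotheses together with the Gram relations, and kills the third using $\langle \ab,\aa\rangle\neq 0$, which is the same non-shared-fixed-point input you justify via amenability of a boundary stabiliser (the paper leaves this implicit in the discreteness/freeness hypothesis, so your justification, like your explicit symmetry bookkeeping organising the four alternatives, tightens a step the paper dispatches with ``the other subcases are similar'').

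However, the mechanism you assign to your crux does not withstand scrutiny: in the branch $M_{23}=0$, $M_{32}\neq 0$, the Gram relations can never force $\ab=0$ or $\rb=0$. If the first (resp.\ last) coordinate of a null vector vanishes, nullity $a_1\bar a_4+a_4\bar a_1+|a_2|^2+|a_3|^2=0$ forces $a_2=a_3=0$, so the vector is proportional to $\ra=e_4$ (resp.\ $\aa=e_1$) --- a shared boundary fixed point, never the zero vector. The branch is indeed vacuous, but for the reason you deploy only in your second stage: expanding $\xa=e_2$ in the $B$-eigenbasis, $M_{23}=0$ kills the $\yb$-coefficient, each branch of $\nu_1=0$ ($M_{21}=0$ giving $\mu_4=0$, or $M_{24}=0$ giving $\mu_1=0$) kills one more, and each branch of $\eta_1=0$ kills the last via $\langle e_2,\aa\rangle=0$ or $\langle e_2,\ra\rangle=0$, modulo the shared-fixed-point degeneracy you already exclude; in all four sub-branches this yields $e_2\parallel\xb$, hence $M_{32}=0$, contradicting the branch hypothesis. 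So your proof is repairable with tools you already have, by replacing the claimed ``$\ab=0$'' contradiction with this collapse, after which it agrees in substance with the paper's. One caveat you share with the paper: $M_{33}=0$ can genuinely occur even when $\xa\parallel\xb$ (e.g.\ $\yb=\frac{1}{\sqrt2}(1,0,0,1)^T$ with suitable null $\ab,\rb$ orthogonal to it), in which case $\zeta_o$ reads $0/0$; the stated equivalence tacitly assumes the denominator of $\zeta_o$ is nonzero, an assumption neither you nor the paper flags.
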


\begin{proof}
Note that a two dimensional totally geodesic subspace of $\ch^3$ corresponds to a copy of $\C^{2,1}$. 

\medskip \textit{The condition is necessary}.   Suppose $\langle A,B \rangle$ preserve a copy of $\C^{2,1}$. Observe that $\langle A,B \rangle$ preserve a copy of $\C^{2,1}$ if and only if $A$ and $B$ have a common space-like eigenvector. Thus, either of the following cases arises:
\begin{itemize}
 \item[(a)] $x_A=x_B$;
 \item[(b)] $y_A=y_B$;
 \item[(c)] $y_A=x_B$;
 \item[(d)] $x_A=y_B$.
\end{itemize}
The result follows from the definition of $\eta_{i}(A,B)$'s and $\zeta_o(A,B)$.

\medskip \textit{The condition is sufficient}. 
Suppose $\zeta_o=0$. We discuss the case (i), i.e., let 
$$\eta_{1}(A,B)=0=\nu_{1}(A,B)=0=\zeta_o(A,B).$$ 
We claim that $x_A=x_B$.
We have,
\begin{equation*}
 \langle {\bf a}_A,{\bf x}_B \rangle \langle {\bf x}_B,{\bf r}_A \rangle =0;
\end{equation*}
\begin{equation*}
\langle {\bf a}_B,{\bf x}_A \rangle \langle {\bf x}_A,{\bf r}_B \rangle=0;
\end{equation*}
\begin{equation*}
\langle {\bf x}_B,{\bf y}_A \rangle \langle {\bf y}_B,{\bf x}_A \rangle =0.
\end{equation*}
Different subcases arise, it is enough to consider the following subcase:
 \begin{equation} \label{sc1} \langle {\bf a}_A,{\bf x}_B \rangle=0, \ \langle {\bf a}_B,{\bf x}_A \rangle=0, \   \langle {\bf y}_B,{\bf x}_A \rangle=0.\end{equation} 
Since,  $\{{\bf a}_B,~{\bf x}_{B},~{\bf y}_B,~{\bf r}_B\}$ is a basis for $\C^{3,1}$, hence,  there exists scalars $\mu_1,  \ \mu_2, \ \mu_3, \ \mu_4$ such that 
$${\bf x}_A=\mu_{1}{\bf a}_{B}+\mu_{2}{\bf x}_{B}+\mu_{3}{\bf y}_{B}+\mu_{4}{\bf r}_{B}.$$
The conditions $\langle {\bf a}_B,{\bf x}_A \rangle=0=\langle {\bf y}_B,{\bf x}_A \rangle$ 
implies $\mu_{3}=0=\mu_{4}$. Hence 
$${\bf x}_A=\mu_{1}{\bf a}_{B}+\mu_{2}{\bf x}_{B}.$$
This implies  
$$0=\langle {\bf x}_{A}, {\bf a}_A \rangle=\mu_{1}\langle {\bf a}_{B}, {\bf a}_{A} \rangle+\mu_{2}\langle {\bf x}_{B}, {\bf a}_{A} \rangle.$$
Using \eqnref{sc1} we have,   $\mu_{1}\langle {\bf a}_{B}, {\bf a}_{A} \rangle=0$. 
Since $\langle {\bf a}_{B}, {\bf a}_{A} \rangle\neq 0$, we have $\mu_{1}=0$. 
Hence ${\bf x}_A=\mu_{2}{\bf x}_{B}$ i.e., $x_B=x_A$, proving the result for the case (i). The argument in the other cases are similar.  

Note that if $\zeta_o=\infty$, then $1/\zeta_o=0$ and  similar arguments work in these cases also. 
\end{proof}
The subgroup $\langle A, B \rangle$ of ${\rm SU}(3,1)$ is called \emph{irreducible} or \emph{Zariski-dense} if it does not preserve a totally geodesic subspace of $\ch^3$. Using the above theorem and the results on cross-ratios, it is possible to derive many conditions for irreducibility of $\langle A, B \rangle$. As a special case we have the following. 
\begin{corollary}
 Let $A$ and $B$ be two loxodromic elements in ${\rm SU}(3,1)$ such that $\langle A, B \rangle$ is non-singular. Then $\langle A, B \rangle$ is irreducible. 
\end{corollary}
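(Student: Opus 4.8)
The plan is to argue by contradiction: assume $\langle A,B\rangle$ preserves a proper complex totally geodesic subspace of $\ch^3$, that is, a $\C^k$-plane $P$ with $k\in\{1,2\}$, and in each case derive a violation of the non-singularity hypothesis, specifically of condition (ii). The engine of the whole argument is a single orthogonality observation about the eigenvectors of a loxodromic. If $A$ is loxodromic, then $\aa,\ra$ carry eigenvalues of modulus $r,r^{-1}$ while the positive eigenvectors $\xa,\ya$ carry eigenvalues of modulus $1$; for eigenvectors $v,w$ of eigenvalues $\lambda,\mu$ the identity $\langle Av,Aw\rangle=\langle v,w\rangle$ gives $(\lambda\bar\mu-1)\langle v,w\rangle=0$, and here the relevant products $\lambda\bar\mu$ have modulus $r^{\pm1}\neq1$. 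Hence $\langle\aa,\xa\rangle=\langle\aa,\ya\rangle=\langle\ra,\xa\rangle=\langle\ra,\ya\rangle=0$, so the two fixed points of $A$ lie on every $\C^2$-chain polar to a positive eigenvector of $A$, and symmetrically for $B$. Note that this plan uses only loxodromicity and condition (ii); condition (iii) plays no role.

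For $k=2$ I would invoke \thmref{redut}, whose proof records that $\langle A,B\rangle$ preserves a copy of $\C^{2,1}$ precisely when $A$ and $B$ possess a common positive (space-like) eigenvector $v$. Applying the orthogonality observation to both $A$ and $B$, the four null vectors $\aa,\ra,\ab,\rb$ are all orthogonal to $v$, so the fixed points $a_A,r_A,a_B,r_B$ all lie on the single $\C^2$-chain polar to $v$. This directly contradicts condition (ii) of non-singularity, which asserts that these four fixed points do not lie on a common $\C^2$-chain.

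For $k=1$ the subspace $P$ is a complex geodesic and \thmref{redut} does not apply, so I would first establish that a loxodromic element preserving a complex geodesic must have it as its complex axis. Indeed, the only $A$-invariant complex $2$-plane on which the Hermitian form is non-degenerate of signature $(1,1)$ is $\mathrm{span}(\aa,\ra)$: any invariant plane involving a positive eigenvector is either degenerate (a null direction orthogonal to a positive one) or positive-definite, hence not of signature $(1,1)$. On $\mathrm{span}(\aa,\ra)$ the map $A$ acts loxodromically with fixed points $a_A,r_A$, so $P=\partial\mathrm{span}(\aa,\ra)$ forces $a_A,r_A\in\partial P$, and the same reasoning for $B$ gives $a_B,r_B\in\partial P$. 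Thus all four fixed points lie on the chain $\partial P$; since any $\C^1$-chain is contained in a $\C^2$-chain (extend the defining $\C^{1,1}$ to a $\C^{2,1}$ by adjoining a polar positive vector), the four fixed points again lie on a common $\C^2$-chain, contradicting condition (ii). One may alternatively route this case through the cross-ratio results, using \lemref{gl} and \propref{inva} to detect the chain alternative.

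The step I expect to be genuinely delicate is not the complex case but the interpretation of ``irreducible''. A conclusion of full Zariski-density would additionally require excluding an invariant maximal totally real subspace $\rh^3$; however such $\R$-Fuchsian configurations are compatible with conditions (i)--(iii) (their four real fixed points can span all of $\C^{3,1}$, so they violate no $\C^2$-chain condition, while their cross ratios are merely real), so condition (ii) alone cannot rule them out. The corollary should therefore be read with ``irreducible'' meaning that $\langle A,B\rangle$ preserves no proper \emph{complex} totally geodesic subspace of $\ch^3$ --- exactly the statement the two cases above establish --- the identification with full Zariski-density being a matter of convention about which totally geodesic subspaces are considered.
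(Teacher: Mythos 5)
Your proof is correct, and it supplies what the paper leaves implicit: the corollary is stated there without proof, with only the remark that it follows from Theorem \ref{redut} ``and the results on cross ratios''. The comparison is instructive. A derivation from Theorem \ref{redut} through condition (iii) alone would not work: that theorem permits, say, $\zeta_o=0$ with $\eta_1(A,B)=0=\nu_1(A,B)$ while $\eta_2(A,B)\neq 0\neq \nu_2(A,B)$, a configuration fully compatible with condition (iii); so the contradiction must come from condition (ii), and that is exactly how you route it. Your eigenvalue-modulus computation giving $\langle {\bf a}_A,{\bf x}_A\rangle=\langle {\bf a}_A,{\bf y}_A\rangle=\langle {\bf r}_A,{\bf x}_A\rangle=\langle {\bf r}_A,{\bf y}_A\rangle=0$ is sound (it is the orthogonality already built into $C_A\in{\rm SU}(3,1)$ in Section \ref{li}), and combined with the observation recorded in the proof of Theorem \ref{redut} --- an invariant copy of $\C^{2,1}$ means a common space-like eigenvector $v$ --- it places all four fixed points on the single $\C^2$-chain polar to $v$, contradicting (ii). You also handle the invariant $\C^1$-plane case, which Theorem \ref{redut} does not address: your classification of the $A$-invariant signature-$(1,1)$ planes as exactly ${\rm span}({\bf a}_A,{\bf r}_A)$ is correct (diagonalizability reduces to planes spanned by pairs of eigenvectors, or planes inside a possibly repeated positive eigenspace, and only the null pair has the right signature), and extending the resulting chain to a $\C^2$-chain again contradicts (ii).

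Your closing caveat is also correct and flags a genuine imprecision in the paper. Non-singularity cannot force Zariski-density in the sense the paper's definition of irreducible would suggest: a generic pair of loxodromic elements in ${\rm SO}(3,1)\subset{\rm SU}(3,1)$ has four real null eigenvectors spanning $\C^{3,1}$ (so condition (ii) holds) and generically non-vanishing $\eta$- and $\nu$-invariants (so condition (iii) holds), yet it preserves a totally real $\rh^3$. Hence both this corollary under the literal ``no totally geodesic subspace'' reading and the introduction's assertion that non-singular subgroups are Zariski-dense are only tenable under your interpretation --- $\langle A,B\rangle$ preserves no proper \emph{complex} totally geodesic subspace of $\ch^3$ --- which is precisely what your two cases establish.
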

\section{Proof of \thmref{mainth}}\label{mnth}  
In this section, we follow the notations from \secref{li}. First, we shall show that for a non-singular pair $(A, B)$, one can always get a well-defined $\alpha$-invariant and a well-defined $\beta$-invariant. 

\subsection{$\alpha$ and $\beta$-invariants are well-defined} Let $A$ and $B$ be two loxodromics such that they form a non-singular pair. Without loss of generality, we can assume $A$ is a diagonal matrix, that is $C_A=\begin{bmatrix} {\bf e}_1 & { \bf e}_2 &  {\bf e}_3 & { \bf e}_4\end{bmatrix} $, where $\{ {\bf e}_1,~ {\bf e}_2, ~{\bf e}_3, ~{\bf e}_4 \}$ is the standard  basis of $\C^{3,1}$.  Let 
$B=C_B E(\lambda, \psi) C_B^{-1}$, where $C_B=\begin{bmatrix} \a_B &  \x_B &  \y_B &  \r_B\end{bmatrix}$. Let
$$\a_B=\begin{bmatrix} a \\ e \\ j \\ n \end{bmatrix}, ~ \x_B=\begin{bmatrix} b \\ f \\ k \\ s \end{bmatrix}, ~ \y_B=\begin{bmatrix} c \\ g \\ l \\ p \end{bmatrix}, ~ \r_B=\begin{bmatrix} d \\ h \\ m \\ q \end{bmatrix}. $$
Now we see that
$$\alpha_1(A, B)=\frac{nb}{as}, ~ ~ \alpha_2 (A, B)=\frac{nc}{ap},$$
$$ \beta_1(A, B)=\frac{\bar n \bar h}{\bar q \bar e}, ~ ~  \beta_2(A, B)=\frac{\bar n \bar m}{{\bar q}{ \bar j}}.$$
Since $\a_B$ and $\r_B$ are negative vectors, we must have $a$, $n$ and $q$ non-zeros. Now note that
\begin{equation} \label{eq1} \langle \a_A, \x_B \rangle =b, ~ \langle \r_A, \x_B \rangle=s,  \end{equation}
\begin{equation}\label{eq2} \langle \a_A, \y_B \rangle =c, ~\langle \r_A, \y_B \rangle =p, \end{equation} 
\begin{equation}\label{eq3} \langle \a_B, \x_A \rangle=e, ~ \langle \r_B, \x_A \rangle= h, \end{equation}
\begin{equation}\label{eq4} \langle \a_B, \y_A \rangle =j, ~ \langle \r_B, \y_A \rangle =m. \end{equation}

It follows from the condition (iii) in the definition of the non-singularity that neither of $\a_A$ and $\r_A$  belong to at least one of the $\C^2$-chains $\x_B^{\perp}$ and $\y_B^{\perp}$, and also, neither of $\a_B$ and $\r_B$ belong to one of the $\C^2$-chains $\x_A^{\perp}$ and $\y_A^{\perp}$. Thus,   one of the equations \eqnref{eq1} and \eqnref{eq2} must have entirely non-zero solution. Similarly, the solution to one of the equations \eqnref{eq3} and \eqnref{eq4} is also entirely non-zero.  Thus,  at least one $\alpha$-invariant and one $\beta$-invariant are always well-defined complex numbers for a non-singular pair of loxodromics. 

It can further be seen from the definition of Goldman's eta invariants that the well-definedness of $\alpha$-invariant and $\beta$-invariant can be stated equivalently by saying that for some $i, j \in \{1,2\}$,  $\eta_i(A, B) \neq 0$ and $\nu_j(A, B) \neq 0$.

\subsection{Proof of \thmref{mainth}}\;

\medskip 
\begin{lemma}\label{lsprp1}
 Let $A,~B,~ A',~B'$ be loxodromic elements in ${\rm {\rm SU }}(3,1)$. Let $\langle A, B \rangle$ be a non-singular subgroup in ${\rm SU}(3,1)$ such that for some $i, j \in\{1,2\}$,  $\eta_i(A, B) \neq 0$ and $\nu_j(A, B) \neq 0$.
Suppose $\alpha_i(A, B)=\alpha_i(A', B')$, $\beta_j(A, B)=\beta_j(A', B')$ and,  for $k=1,2,3$,   $\X_{k}(A,~B)=\X_{k}(A',~B')$.  Then
there exists an element $C$ in ${\rm SU }(3,1)$ such that $C(a_{A})=a_{A'},C(x_{A})=x_{A'},~C(y_{A})=y_{A'},~C(r_{A})=r_{A'}$,  and, 
$C(a_{B})=a_{B'},C(x_{B})=x_{B'},~C(y_{B})=y_{B'},~C(r_{B})=r_{B'}$.  
\end{lemma}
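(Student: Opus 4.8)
The plan is to produce the single conjugating element $C$ in two movements: first match the four boundary fixed points by invoking the cross-ratio rigidity of \propref{crvpr3}, and then observe that this \emph{same} $C$ is already forced to carry the polar (positive) eigenvectors correctly, the latter being pinned down by the equality of the $\alpha$- and $\beta$-invariants together with the non-singularity hypotheses. This reduces the whole lemma to one genuine computation, the injectivity of an extended cross-ratio in its polar slot.

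First I would apply \propref{crvpr3} to the two ordered quadruples of boundary points $(a_A, r_A, a_B, r_B)$ and $(a_{A'}, r_{A'}, a_{B'}, r_{B'})$. Since the three cross-ratios $\X_k$ of these quadruples agree by hypothesis, I obtain $C \in {\rm SU}(3,1)$ with $C(a_A)=a_{A'}$, $C(r_A)=r_{A'}$, $C(a_B)=a_{B'}$, $C(r_B)=r_{B'}$. Here \propref{crvpr3} is stated for non-real cross-ratios, and this is exactly where non-singularity enters: by \propref{inva}, real cross-ratios would force the four fixed points onto a common chain, hence onto a common $\C^2$-chain, contradicting condition (ii); the residual totally real configuration must be excluded or dispatched by the companion real-case lemmas.

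Next I would promote $C$ to the positive eigenvectors. Because $C$ sends the lines $a_A, r_A$ to $a_{A'}, r_{A'}$ and is unitary, it carries the two-dimensional positive subspace $\langle \aa, \ra\rangle^{\perp}$ onto $\langle a_{A'}, r_{A'}\rangle^{\perp}$, so both $C(x_A)$ and $x_{A'}$ are positive directions there. Now the key point: the $\beta$-invariant $\beta_j(A,B)=\X(r_B, a_B, x_A, a_A)$ is ${\rm SU}(3,1)$-invariant, so inserting $C$ and using the images already fixed above gives $\X(r_{B'}, a_{B'}, C(x_A), a_{A'}) = \beta_j(A,B) = \beta_j(A',B') = \X(r_{B'}, a_{B'}, x_{A'}, a_{A'})$. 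As a function of its third (positive) argument $v$, this expression equals a nonzero constant times $\langle v, r_{B'}\rangle/\langle v, a_{B'}\rangle$, which is injective on $\P\langle a_{A'}, r_{A'}\rangle^{\perp}$ precisely when the two functionals $\langle\,\cdot\,, a_{B'}\rangle$ and $\langle\,\cdot\,, r_{B'}\rangle$ are independent on that plane, equivalently when $a_{A'}, r_{A'}, a_{B'}, r_{B'}$ span $\C^{3,1}$. This spanning is again guaranteed by non-singularity (no common $\C^2$-chain), while the hypothesis $\nu_j(A,B)\neq 0$ ensures the relevant pairings do not degenerate; hence $C(x_A)=x_{A'}$. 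Running the symmetric argument with $\alpha_i(A,B)=\X(r_A, a_A, x_B, a_B)$ and the hypothesis $\eta_i(A,B)\neq 0$ yields $C(x_B)=x_{B'}$.

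Finally, since $x_A, y_A$ (respectively $x_B, y_B$) form an orthonormal basis of the positive plane $\langle \aa, \ra\rangle^{\perp}$ (respectively $\langle \ab, \rb\rangle^{\perp}$) and $C$ is unitary, $C(y_A)$ is the unique positive direction in $\langle a_{A'}, r_{A'}\rangle^{\perp}$ orthogonal to $C(x_A)=x_{A'}$, namely $y_{A'}$; likewise $C(y_B)=y_{B'}$. Thus the single element $C$ realises all eight required identifications. I expect the main obstacle to be this non-degeneracy step, namely verifying that the extended cross-ratio is genuinely injective in the polar slot, which is exactly what condition (ii) together with $\eta_i(A,B)\neq 0\neq\nu_j(A,B)$ is designed to supply; a secondary point to settle is the applicability of \propref{crvpr3}, i.e. ruling out real cross-ratios in the non-singular regime.
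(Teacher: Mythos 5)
Your proposal is correct and follows essentially the same route as the paper: \propref{crvpr3} supplies a single $C\in{\rm SU}(3,1)$ matching the four boundary fixed points, and the equality of the $\alpha$- and $\beta$-invariants combined with the non-degeneracy coming from condition (ii) of non-singularity (equivalently, linear independence of the four null eigenvectors) then forces $C$ to carry the positive eigenvectors correctly, with $C(y_A)=y_{A'}$, $C(y_B)=y_{B'}$ following by orthogonality exactly as you say. Your Möbius-injectivity phrasing of the key step is just a repackaging of the paper's computation, which sets $v={\bf x}_B-\lambda C^{-1}({\bf x}_{B'})$, shows $v$ is orthogonal to ${\bf a}_A,{\bf r}_A,{\bf a}_B,{\bf r}_B$, and concludes $v=0$ since otherwise $v$ would be polar to a $\C^2$-chain containing all the fixed points of $A$ and $B$; even the caveat you flag about applying \propref{crvpr3} when cross-ratios could be real is present (and likewise left untreated) in the paper's own proof.
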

\begin{proof}
We shall prove the lemma assuming that $(i, j)=(1,1)$.  The rest of the cases are similar.  Since $~\X_{k}(A,~B)=\X_{k}(A',~B'),~k=1,2,3$, by \propref{crvpr3} it follows that there exist $C\in{\rm SU }(3,1)$ such that 
$a_{A'}=C(a_{A}),~r_{A'}=C(r_{A}),~a_{B'}=C(a_B)$ and  $r_{B'}=C(r_{B})$. 
Since $\alpha_1(A',~B')=\alpha_1(A,~B)$, we have
\begin{eqnarray*}
\frac{\langle \xb, \ra \rangle \langle \ab, \aa\rangle}{\langle \ab, \ra \rangle \langle \xb, \aa \rangle }& = & \frac{\langle \xb', \ra'\rangle \langle \ab', \aa' \rangle}{\langle \ab', \ra' \rangle \langle \xb', \aa'\rangle}\\
&=& \frac {\langle C^{-1} (\xb'), \ra\rangle \langle \ab, \aa\rangle}{\langle \ab, \ra \rangle \langle C^{-1}(\xb'), \aa \rangle}\\
\implies \frac{\langle \xb, \ra \rangle}{\langle  C^{-1} (\xb'), \ra\rangle}
 &=& \frac{\langle \xb, \aa \rangle}{\langle C^{-1}(\xb'), \aa \rangle}.
\end{eqnarray*}
Let 
$$\lambda=\frac{\langle \xb, \ra \rangle}{\langle  C^{-1} (\xb'), \ra\rangle}
 = \frac{\langle \xb, \aa \rangle}{\langle C^{-1}(\xb'), \aa \rangle}.$$
This implies
\begin{equation}\label{e1}  \langle \xb-\lambda C^{-1}(\xb'), \ra \rangle=0; \end{equation} 
\begin{equation} \label{e2}  \langle \xb-\lambda C^{-1}(\xb'), \aa \rangle=0.\end{equation}
On the other hand, note that
\begin{equation}\label{e3}  \langle \xb-\lambda C^{-1} (\xb'), \rb\rangle=\langle \xb, \rb\rangle-\overline \lambda\langle C^{-1}(\xb')-\rb \rangle=0-\overline \lambda \langle \xb', \rb' \rangle=0. \end{equation} 

Similarly,
\begin{equation} \label{e4} \langle  \xb-\lambda C^{-1} (\xb'), \ab \rangle=0.\end{equation}
Let $L_A$  and $L_B$ denote the two-dimensional time-like subspaces of $\C^{3,1}$ that represent the complex axes of $A$ and $B$ respectively. Thus $\{\aa, \ra\}$ and $\{\ab, \rb\}$ are the respective bases of $L_A$ and $L_B$. It follows from \eqnref{e1} -- \eqnref{e4} that $v= \xb-\lambda C^{-1} (\xb')$ is orthogonal to  both $L_A$ and $L_B$.  We must have $\langle v, v \rangle >0$. Thus $v$ is polar to the $\C^2$-chain (copy of $\ch^2$)  that is represented by ${\rm V}=v^{\perp}$. Since $\C^{3,1}={\rm V} \oplus  \C v $, hence $L_A$ and $L_B$ must be subsets in ${\rm V}$. Thus, the fixed points of $A$ and $B$ belong to the boundary of the $\C^2$-chain $\P({\rm V})$. This is a contradiction to the non-singularity of $(A, B)$. Hence we must have $v=0$, that is $C(\xb)=\lambda \xb'$. Thus,  $C(x_B)=x_B'$. Consequently, $C(y_B)=y_B'$. 

Similarly $\beta_1(A, B)=\beta_1(A', B')$ implies $C(x_A)=x_A'$, and hence, $C(y_A)=y_A'$.  This proves the lemma. 
\end{proof}

\subsubsection{Proof of \thmref{mainth}}
\begin{proof}
Suppose that $A,~B,~A',~B'$  are loxodromic elements such that  $$tr(A)=tr(A'),~tr(B)=tr(B'),~\sigma(A)=\sigma(A'), ~  \sigma(B)=\sigma(B');$$
$$\alpha_i(A, B)=\alpha_i(A', B'), ~\beta_j(A, B)=\beta_j(A', B') ~ \hbox{ and for }k=1,2,3, ~ \X_{k}(A,~B)=\X_{k}(A',~B'). $$
 Following the notation in \secref{li}, $A=C_{A}E_{A}C_{A}^{-1},~B=C_{B}E_{B}C_{B}^{-1}$ and similarly for $A'$
 and $B'$. Since the cross-ratios are equal, by \lemref{lsprp1} it follows that there exist $C\in{\rm{{\rm SU }}}(3,1)$ such that 
$C(a_{A})=a_{A'},C(x_{A})=x_{A'},~C(y_{A})=y_{A'},~C(r_{A})=r_{A'}$ and $C(a_{B})=a_{B'},C(x_{B})=x_{B'},~C(y_{B})=y_{B'},~C(r_{B})=r_{B'}$. Therefore $CAC^{-1}$
 and $A'$ have same eigenvectors. Since $tr(A')=tr(CAC^{-1}),~\sigma(A')=\sigma(CAC^{-1})$, by \corref{licor1} and \propref{lipr1}, we must have $CAC^{-1}=A'$.
 Similarly, \hbox{$B'=CBC^{-1}$}.  Thus $\langle A',B' \rangle=\langle CAC^{-1},CBC^{-1} \rangle=C\langle A,B \rangle C^{-1}$ as claimed.   
\end{proof}

\section{The Twist-Bend Parameter} \label{twb} 
 Let $\langle A, B \rangle$ be a non-singular $(0,3)$ group in ${\rm SU}(3,1)$, that is, $A$ and $B$ are loxodromics such that $AB$ is also loxodromic. We want to attach two such non-singular subgroups to get a group that is freely generated by three generators. Now two cases are possible.  The first case corresponds to the case when two different pairs of pants are attached along their boundary components. In this case, the $(0,3)$ groups correspond to different pairs of pants and, they give a $(0, 4)$ group. The second case corresponds to the case when two of the boundary components of the same pair of pants is attached to give a torus. In this case attaching two $(0,3)$ groups yields an $(1,1)$ group which is a group generated by two loxodromic elements and their commutator. This process is called `closing a handle'. To get more details about the geometric description of these terminologies and their interpretations in terms of group theoretic operations, we refer to Parker-Platis \cite{pp}. 

Unless stated otherwise, the two-generator subgroups in this section are always assumed to be non-singular. 
Let $\langle A, B \rangle$ and $\langle C, D \rangle$ be two such $(0,3)$ groups in ${\rm SU}(3,1)$ such that the boundary components associated to $A$ and $D$ are compatible, i.e., $A=D^{-1}$. A \emph{complex hyperbolic twist bend} corresponds to an element $K$ in ${\rm SU}(3,1)$ that commutes with $A$ and conjugates $\langle C, D \rangle$, see Parker-Platis \cite[Section 8.1]{pp} for the ideas behind this notion. We assume that up to conjugacy, $A$ fixes $0$, $\infty$, and it is of the form $E(\lambda, \phi)$, for some $(\lambda, \phi) \in S$. Since $K$ commutes with $A$, it is also of the form $K=E(\kappa, \psi)$ for some $(\kappa, \psi) \in S$, see \cite{gcpr}. Thus $K$ is either a boundary elliptic or, a loxodromic. The parameters  $(\kappa, \psi)$ obtained this way, is the \emph{twist-bend parameter}. It should be noted that the twist-bend is a relative invariant. It should always be chosen with respect to some fixed group $\langle A, B, C \rangle$ that one has to specify before applying the twist-bend. It 
gives us the degrees of freedom that are needed while attaching boundaries of pairs of pants to obtain a two-holed sphere that is fixed at the beginning. When we write $A=Q E(\lambda, \phi) Q^{-1}$, if the matrix $K$ is given by $QE(\kappa, \psi)Q^{-1}$, then we say that the twist-bend parameter $(\kappa, \psi)$ is \emph{oriented consistently} with $A$.  

To obtain conjugacy-invariant way to measure the twist-bend parameter, we define the following quantities:
$$\tilde \X_1(\kappa, \psi)=[\a_B, \a_A, \r_A, K(\r_C)], ~ \tilde \X_2(\kappa, \psi)=[\a_B, \r_A, \a_A, K(\r_C)], 
$$
$$\tilde \beta_1(\kappa, \psi)=[K(\r_C), \a_B, \x_A, \a_A], ~ \tilde \beta_2(\kappa, \psi)=[K(\r_C), \a_B, \y_A, \a_A].$$
\begin{lemma}\label{tw1} 
 Let $A$, $B$, $C$ be loxodromic elements in ${\rm SU}(3,1)$ such that $\langle A, B \rangle$ and $\langle A^{-1}, C \rangle$ are non-singular.  Let $(\kappa, \psi)$ and $(\kappa', \psi')$ are twist-bend parameters that are oriented consistently with $A$. If 
 $$\tilde \X_1(\kappa, \psi)=\tilde \X_1(\kappa', \psi'), ~ \tilde \X_2(\kappa, \psi)=\tilde \X_2(\kappa', \psi'), \hbox{ and  } ~  \tilde \beta_i(\kappa, \psi)=\tilde \beta_i(\kappa', \psi'), \hbox{ i =1 or 2},$$
 then $\kappa=\kappa',  \psi=\psi'$. 
\end{lemma}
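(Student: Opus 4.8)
The plan is to exploit that, since $K$ and $K'$ must both commute with the fixed loxodromic $A$, everything can be put in a single fixed coordinate frame and the lemma reduced to a direct computation with the two diagonal matrices. First I would conjugate by a fixed $Q$ so that $A=E(\lambda,\phi)$ as in \secref{li}; then $\a_A={\bf e}_1$, $\r_A={\bf e}_4$, $\x_A={\bf e}_2$, $\y_A={\bf e}_3$, and since the twist-bends are oriented consistently with $A$, both $K=E(\kappa,\psi)$ and $K'=E(\kappa',\psi')$ are diagonal. Writing $\a_B=(b_1,b_2,b_3,b_4)^{\top}$ and $\r_C=(\rho_1,\rho_2,\rho_3,\rho_4)^{\top}$, the hypothesis that $r_C$ differs from the two fixed points of $A$ forces $\rho_1=\langle\r_C,\r_A\rangle\neq0$ and $\rho_4=\langle\r_C,\a_A\rangle\neq0$, because lifts of distinct boundary points pair non-trivially; likewise $b_1=\langle\a_B,\r_A\rangle\neq0$ and $b_4=\langle\a_B,\a_A\rangle\neq0$. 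Setting $w=K(\r_C)$ and $w'=K'(\r_C)$, the diagonal action gives the explicit coordinates $w_\ell=(K)_{\ell\ell}\rho_\ell$ and $w'_\ell=(K')_{\ell\ell}\rho_\ell$.

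Next I would write the invariants as explicit rational expressions. Using the form, $\langle w,\a_A\rangle=w_4$, $\langle w,\r_A\rangle=w_1$, and $\langle w,\a_B\rangle=w_1\bar b_4+w_2\bar b_2+w_3\bar b_3+w_4\bar b_1=:D$, so that
$$\tilde\X_1=\frac{\bar b_1 w_4}{D},\quad \tilde\X_2=\frac{\bar b_4 w_1}{D},\quad \tilde\beta_1=\frac{\bar b_4\,\bar w_2}{\bar b_2\,\bar w_4},\quad \tilde\beta_2=\frac{\bar b_4\,\bar w_3}{\bar b_3\,\bar w_4}.$$
Equality of $\tilde\X_1,\tilde\X_2$ for the two parameters yields, from the quotient $\tilde\X_2/\tilde\X_1$, that $w_1/w_4=w'_1/w'_4$, and then from $1/\tilde\X_1$ (using $b_1\neq0$) that the combination $(\bar b_2 w_2+\bar b_3 w_3)/w_4$ agrees for both parameters. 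Equality of the chosen $\tilde\beta_i$ gives $w_2/w_4=w'_2/w'_4$ when $i=1$, or $w_3/w_4=w'_3/w'_4$ when $i=2$.

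With these in hand I would recover all three projective ratios $w_\ell/w_4$. Say $\tilde\beta_1$ is used, so $w_2/w_4$ is matched; subtracting $\bar b_2(w_2/w_4)$ from the matched combination leaves $\bar b_3(w_3/w_4)$ matched, and provided $b_3=\langle\a_B,\y_A\rangle\neq0$ this forces $w_3/w_4=w'_3/w'_4$ as well (symmetrically with $\tilde\beta_2$ and $b_2$). Substituting $w_\ell=(K)_{\ell\ell}\rho_\ell$ and reading off the diagonal entries of $E(\kappa,\psi)$, one has $w_1/w_4=e^{2\Re\kappa}(\rho_1/\rho_4)$, $w_2/w_4=e^{\Re\kappa}e^{-i(\psi+2\Im\kappa)}(\rho_2/\rho_4)$ and $w_3/w_4=e^{\Re\kappa}e^{i(\psi-2\Im\kappa)}(\rho_3/\rho_4)$. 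The first relation gives $\Re\kappa=\Re\kappa'$; using $\rho_2,\rho_3\neq0$, the other two give $\psi+2\Im\kappa$ and $\psi-2\Im\kappa$ modulo $2\pi$, and solving this linear system together with the normalisation $(\kappa,\psi)\in S$ yields $\Im\kappa=\Im\kappa'$ and $\psi=\psi'$, hence $\kappa=\kappa'$ and $\psi=\psi'$.

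The main obstacle is precisely the step recovering \emph{both} the ${\bf e}_2$- and the ${\bf e}_3$-components of $w$: the invariants only detect the action of $K$ on a coordinate $\ell\in\{2,3\}$ when the pairings $\langle\a_B,\x_A\rangle,\langle\a_B,\y_A\rangle$ and $\langle\r_C,\x_A\rangle,\langle\r_C,\y_A\rangle$ are non-zero, for if one vanishes the phase of that coordinate of $w$ becomes invisible and $(\kappa,\psi)$ is not pinned down. This is where non-singularity enters: the hypotheses that $\langle A,B\rangle$ and $\langle A^{-1},C\rangle$ are non-singular guarantee the relevant $\eta$- and $\nu$-invariants of these two pairs do not vanish, so that the chosen $\tilde\beta_i$ is well defined and the complementary coefficient $b_2$ or $b_3$ is non-zero. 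The delicate bookkeeping is to match the index $i$ of $\tilde\beta_i$ to the non-vanishing data supplied by the two non-singular pairs, so that all three ratios $w_\ell/w_4$ are genuinely determined.
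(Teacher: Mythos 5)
Your coordinate computations are all correct, and your frame is the one the paper itself uses; the divergence comes in how you close the argument, and there the proposal has a genuine gap. To recover $w_3/w_4$ you must divide by $b_3=\langle \a_B,\y_A\rangle$, and at the end you also need both $\rho_2=\langle\r_C,\x_A\rangle$ and $\rho_3=\langle\r_C,\y_A\rangle$ non-zero. You assert that non-singularity guarantees ``the complementary coefficient $b_2$ or $b_3$ is non-zero,'' but condition (iii) of non-singularity only guarantees that \emph{at least one} $\eta$-invariant and \emph{at least one} $\nu$-invariant of each pair is non-zero: one of the pairs $\bigl(b_2,\langle\r_B,\x_A\rangle\bigr)$, $\bigl(b_3,\langle\r_B,\y_A\rangle\bigr)$ consists of non-zero numbers, not both, and likewise only one of $\rho_2,\rho_3$ is supplied. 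So in a configuration fully allowed by the hypotheses (say the matched invariant is $\tilde\beta_1$, the $\nu_1$-type data are non-zero, but $b_3=0$), your matched quantities degenerate to $\Re\kappa$ and $\psi+2\Im\kappa \pmod{2\pi}$ and the reconstruction of all three ratios $w_\ell/w_4$ breaks down. Moreover, even when all four pairings are non-zero, your final system $\psi\pm 2\Im\kappa \pmod{2\pi}$ admits the spurious solution $(\Im\kappa+\pi/2,\ \psi+\pi)$, which corresponds to replacing $K$ by $iK$ (note $E(\kappa+i\pi/2,\psi+\pi)=iE(\kappa,\psi)$, a central element); every one of $\tilde\X_1,\tilde\X_2,\tilde\beta_i$ is invariant under $w\mapsto iw$, and the normalisation $(\kappa,\psi)\in S$ does not exclude this branch, so the last sentence of your third paragraph cannot be repaired within your scheme: the invariants determine $K$ only up to the centre of ${\rm SU}(3,1)$.

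For comparison, the paper never tries to see the third coordinate of $w$. It takes the quotient $\tilde\X_1/\tilde\X_2$, in which your denominator $D=\langle K(\r_C),\a_B\rangle$ cancels, giving (in its notation $\a_B=(a,d,g,h)^{\top}$, $\r_C=(c',f',j',t')^{\top}$) the expression $t'e^{-\bar\kappa}\bar a/(c'e^{\kappa}\bar h)$, from which it concludes $\kappa=\kappa'$; then the single matched $\tilde\beta_1$, equal to $\bar f' e^{i\psi+(\kappa-\bar\kappa)/2}/(\bar t' e^{-\kappa})$ up to factors independent of $(\kappa,\psi)$, yields $\psi=\psi'$ using only $f'\neq 0$, i.e. $\nu_1(A,C)\neq 0$ --- exactly the non-vanishing that non-singularity of $\langle A^{-1},C\rangle$ provides, with the index $i$ of $\tilde\beta_i$ matched to the non-zero $\nu_i$. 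So the paper needs only $b_1,b_4,\rho_1,\rho_4\neq 0$ (automatic for distinct fixed points) plus one $\nu$-invariant, and avoids your extra hypotheses entirely. To your credit, the worry about $\Im\kappa$ that drove your more elaborate reconstruction is legitimate: the paper's ratio literally pins only $e^{-(\kappa+\bar\kappa)}$, i.e. $\Re\kappa$, so its step ``$\Rightarrow\kappa=\kappa'$'' is abbreviated, and the central ambiguity above shows the clean statement is determination of $(\kappa,\psi)$ up to the centre (or after a normalisation finer than $S$). But the cure is the paper's cancellation argument together with such a normalisation, not the additional non-vanishing your route requires.
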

\begin{proof}
We shall prove the lemma assuming  $\nu_1(A, C) \neq 0$. The other case is similar. 

 Without loss of generality, assume that $A$ fixes $0$ and $\infty$ and up to conjugacy $A=E(\lambda, \phi)$. So, we can further assume $K=E(\kappa, \psi)$. 
 Let $a_A=\infty$, $r_A=0$. Thus 
 $$\a_A=
 \begin{bmatrix} 1 \\ 0\\ 0\\ 0 \end{bmatrix}, ~ \r_A=\begin{bmatrix} 0 \\ 0\\ 0\\ 1\end{bmatrix}.$$
 Let 
 $$\a_B=\begin{bmatrix} a \\ d\\g\\h \end{bmatrix}, ~ \r_B=\begin{bmatrix} c\\ f\\j\\t \end{bmatrix}, ~ \r_C=\begin{bmatrix} c' \\ f'\\j'\\t' \end{bmatrix}.$$
Further we assume without loss of generality, $\x_A=\begin{bmatrix} 0\\ 1\\0\\0 \end{bmatrix}$. Since $\a_A$, $\r_B$, $\r_C$ are light-like vectors,  $a, h, t', c'$ are non-zero numbers. Hence,  we have,
\begin{eqnarray}
 \frac{\tilde \X_1(\kappa, \psi)}{\tilde \X_2(\kappa, \psi)}&=&\frac{\tilde \X_1(\kappa', \psi')}{\tilde \X_2(\kappa', \psi')}\\
\Rightarrow \frac{t' e^{-\bar \kappa} \bar a}{ c' e^{\kappa} \bar h} &=& \frac{t' e^{-\bar \kappa'} \bar a}{c' e^{\kappa} \bar h}\\
\Rightarrow \kappa=\kappa'.
\end{eqnarray}
Next $\tilde \beta_1(\kappa, \psi)=\tilde \beta_1(\kappa', \psi')$ implies
$$\frac{\bar f' e^{i \psi + \frac{\kappa-\bar \kappa}{2}}}{\bar t' e^{-\kappa}}=\frac{\bar f' e^{i \psi' + \frac{\kappa'-\bar \kappa'}{2}}}{\bar t' e^{-\kappa'}}.$$
As $\nu_1(A, C) \neq 0$, $f' \neq 0$ and since $\kappa=\kappa'$, we must have $\psi=\psi'$.   
\end{proof}

\subsection{Attaching two pairs of pants} A $(0, 4)$ subgroup of ${\rm SU}(3,1)$ is a group with four loxodromic generators such that their product is identity. These four loxodromic maps correspond to the boundary curves of the four-holed spheres and are called \emph{peripheral}.  Thus a $(0,4)$ group is freely generated by any of these three loxodromic elements.

Let $\langle A, B \rangle$ and $\langle C, D \rangle$ are two $(0,3)$ groups with $A^{-1}=D$. Algebraically, a $(0,4)$ group is constructed by the amalgamated free product of these groups with amalgamation along the common cyclic subgroup $\langle A \rangle$. Conjugating $\langle C, D \rangle$ by the twist-bend $K$ yields a new $(0, 4)$ subgroup that is depended on $K$.  We note the following lemma whose proof goes the same as Lemma 8.3 of Parker-Platis \cite[p.131]{pp}. 

\begin{lemma}
Suppose $\Gamma_1=\langle A, B \rangle$ and $\Gamma_2=\langle C, D \rangle$ are two $(0,3)$ groups with peripheral elements $A$, $B$, $B^{-1} A^{-1}$ and $C$, $D$, $D^{-1} C^{-1}$ respectively. Moreover suppose that $A=D^{-1}$. Let $K$ be any element of ${\rm SU}(3,1)$ that commutes with $A=D^{-1}$. The the group $\langle A, B, KCK^{-1} \rangle$ is a $(0, 4)$ group with peripheral elements $B$, $B^{-1} A^{-1}$, $KCK^{-1}$ and $KD^{-1} C^{-1} K^{-1}$. 
\end{lemma}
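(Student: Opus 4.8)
The plan is to deduce everything from two observations: an algebraic simplification coming from the hypothesis that $K$ commutes with $A=D^{-1}$, and a single relation among the four proposed peripheral elements. First I would note that since $K$ commutes with $A$ it commutes with $D=A^{-1}$ as well, so $KDK^{-1}=D=A^{-1}$. Hence the conjugated factor is
$$K\Gamma_2 K^{-1}=\langle KCK^{-1},\,KDK^{-1}\rangle=\langle KCK^{-1},\,A\rangle,$$
and therefore $\langle A,B,KCK^{-1}\rangle=\langle\,\Gamma_1,\ K\Gamma_2K^{-1}\,\rangle$, the subgroup generated by the two $(0,3)$ groups $\Gamma_1$ and $K\Gamma_2K^{-1}$, whose intersection contains the cyclic group $\langle A\rangle$.

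Next I would invoke the amalgamation picture set up just before the lemma: the group $\langle\,\Gamma_1,\ K\Gamma_2K^{-1}\,\rangle$ is the amalgamated free product $\Gamma_1 *_{\langle A\rangle} K\Gamma_2 K^{-1}$. Each factor is free of rank two (a non-singular $(0,3)$ group is free), and the amalgamated subgroup $\langle A\rangle$ is a free factor of each; gluing two rank-two free groups along such a primitive cyclic subgroup yields a free group of rank $2+2-1=3$, freely generated by $A$, $B$, $KCK^{-1}$. Since $\{B,\,B^{-1}A^{-1}\}$ again generates $\Gamma_1$ and is thus a free basis of it, the triple $\{B,\,B^{-1}A^{-1},\,KCK^{-1}\}$ is also a free basis, so the group is freely generated by three of its four peripheral curves, as the definition of a $(0,4)$ group requires.

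It remains to identify the peripheral elements and check the relation. In $\Gamma_1$ the boundary $A$ is the gluing curve, leaving $B$ and $B^{-1}A^{-1}$; in $K\Gamma_2K^{-1}$ the conjugated boundaries are $KCK^{-1}$, $KDK^{-1}=A^{-1}$ and $KD^{-1}C^{-1}K^{-1}$, of which $A^{-1}$ is absorbed in the gluing, leaving $KCK^{-1}$ and $KD^{-1}C^{-1}K^{-1}$. Each of these four is conjugate to a peripheral element of $\Gamma_1$ or $\Gamma_2$ and is therefore loxodromic. Using $A^{-1}=KDK^{-1}$, their product in the appropriate cyclic order is
$$\bigl(KCK^{-1}\bigr)B\bigl(B^{-1}A^{-1}\bigr)\bigl(KD^{-1}C^{-1}K^{-1}\bigr)=\bigl(KCK^{-1}\bigr)A^{-1}\bigl(KD^{-1}C^{-1}K^{-1}\bigr)=K\bigl(CDD^{-1}C^{-1}\bigr)K^{-1}=I,$$
which is the defining relation of a $(0,4)$ group.

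The relation and the loxodromicity are routine once the commutation $KDK^{-1}=A^{-1}$ has been used. The step I expect to be the genuine obstacle is the claim that $\langle A,B,KCK^{-1}\rangle$ is honestly free of rank three, i.e. that the amalgamated free product is realized \emph{faithfully} inside ${\rm SU}(3,1)$, with $\Gamma_1\cap K\Gamma_2K^{-1}=\langle A\rangle$ and no unexpected collapse of reduced words. This is exactly the content carried by the combination-theorem framework behind the $(0,4)$ construction (the analogue of Lemma 8.3 of Parker--Platis), and it relies on the discreteness and the non-singularity of the two factor groups rather than on the formal bookkeeping above.
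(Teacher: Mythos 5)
Your core verification is exactly the paper's proof. The paper offers no independent argument for this lemma --- it explicitly defers to Lemma 8.3 of Parker--Platis, whose content is precisely the bookkeeping you carry out: the peripherals of $K\Gamma_2K^{-1}$ are $KCK^{-1}$, $KDK^{-1}=D=A^{-1}$ and $KD^{-1}C^{-1}K^{-1}$, with the middle one absorbed into the gluing; each of the four listed elements is loxodromic, being conjugate to a peripheral element of $\Gamma_1$ or $\Gamma_2$ (note $B^{-1}A^{-1}=(AB)^{-1}$ and $KD^{-1}C^{-1}K^{-1}$ is conjugate to $(CD)^{-1}$); the four elements generate $\langle A,B,KCK^{-1}\rangle$ since $B\cdot(B^{-1}A^{-1})=A^{-1}$ and $KD^{-1}C^{-1}K^{-1}=A\,(KCK^{-1})^{-1}$; and the product in your chosen cyclic order collapses to the identity via $K^{-1}A^{-1}K=A^{-1}=D$. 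That computation is correct and is all the cited proof contains.

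Where you diverge from the paper is the amalgamated-free-product paragraph, and there you have misread what the lemma asserts. The paper's definition, at the start of the subsection on attaching two pairs of pants, is that a $(0,4)$ subgroup is a group with four loxodromic generators whose product is the identity; freeness of rank three is not part of the claim. (The follow-up sentence ``freely generated by any of these three'' is loose phrasing --- what the product relation actually gives is that any three of the four peripherals generate.) Consequently the step you single out as ``the genuine obstacle,'' namely the faithful realization of $\Gamma_1 *_{\langle A\rangle} K\Gamma_2K^{-1}$ inside ${\rm SU}(3,1)$, is not an obstacle for this lemma at all: neither this paper nor Parker--Platis invokes a combination theorem here, and no discreteness is assumed in the statement. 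Your supporting assertions would in any case need such hypotheses --- non-singularity is a condition on fixed points and eigenvectors and does not imply that a $(0,3)$ group is discrete or free, contrary to your parenthetical ``a non-singular $(0,3)$ group is free.'' The correct repair is simply to delete that paragraph: your first and third paragraphs already constitute a complete proof of the lemma as stated, and they coincide with the paper's intended argument.
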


\begin{prop}\label{tb1} 
Suppose that $\langle A, B \rangle$ and $\langle C, A^{-1} \rangle$ are two non-singular $(0,3)$ groups. Let $(\kappa, \psi)$ be a twist-bend parameter oriented consistently with $A$ and let $\langle A, B, KCK^{-1}\rangle$ be the corresponding $(0, 4)$ group. Then $\langle A, B, KCK^{-1} \rangle$ is uniquely determined up to conjugation in ${\rm SU}(3,1)$ by the \emph{Fenchel-Nielsen coordinates}: 

\medskip 

 $tr(A),~tr(B),~tr(C), ~\sigma(A),~\sigma(B),~\sigma(C)$, 
$~\X_{k}(A,~B), ~\X_k(A, C), ~k=1,2,3$, two non-zero $\alpha$-invariants: $\alpha_i(A, B), ~ \alpha_l(A, C)$, two non-zero $\beta$-invariants: $\beta_j(A, B), ~ \beta_m(A, C)$ and the twist-bend parameter $(\kappa, \psi)$. 

\medskip In the parameter space associated to $\langle A, B , KCK^{-1}\rangle$, the parameters corresponding to $traces, ~ \alpha$ and $\beta$ are complex numbers, the tuple $(\X_1, \X_2, \X_3)$ belongs to the cross-ratio variety that is 5 real dimensional and $(\kappa, \psi)$ has real dimension three. Thus we need a total of 30 real parameters to specify $\langle A, B, KCK^{-1}\rangle$ up to conjugacy. 
\end{prop}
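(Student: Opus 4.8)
The plan is to build the $(0,4)$ group as two pairs of pants that \thmref{mainth} places rigidly, and then glue them by the twist-bend, using \lemref{tw1} to check that the gluing is recorded faithfully by $(\kappa,\psi)$. Suppose $G=\langle A,B,KCK^{-1}\rangle$ and $G'=\langle A',B',K'C'K'^{-1}\rangle$ carry identical Fenchel-Nielsen data. The first pants $\langle A,B\rangle$ and $\langle A',B'\rangle$ share their traces, their $\sigma$-invariants, the three cross-ratios and the chosen $\alpha$- and $\beta$-invariants, so by \thmref{mainth} (and the explicit conjugator produced in \lemref{lsprp1}) there is $P\in{\rm SU}(3,1)$ with $PAP^{-1}=A'$ and $PBP^{-1}=B'$. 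Replacing $G'$ by $P^{-1}G'P$ and then conjugating both groups simultaneously, I may assume $A=A'=E(\lambda,\phi)$ is diagonal and $B=B'$. Since $\langle A,B\rangle$ is non-singular, hence Zariski-dense, its centraliser in ${\rm SU}(3,1)$ is the centre; after this normalisation no conjugation survives that could move the third generator, so it suffices to prove $KCK^{-1}=K'C'K'^{-1}$.

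Next I would place the second pants. Because $K$ commutes with $A$, conjugation by $K$ carries the non-singular pair $(A,C)$ to $(A,KCK^{-1})$, so the latter inherits the prescribed pants-2 data (the $(A,\cdot)$ cross-ratios, $\alpha$-, $\beta$-invariants, trace and $\sigma$); the same holds for $(A,K'C'K'^{-1})$. Applying \thmref{mainth} to these two non-singular pairs yields $R\in{\rm SU}(3,1)$ fixing the eigenvectors of $A$, hence lying in the maximal torus $Z(A)=\{E(\mu,\chi)\}$, with $R(KCK^{-1})R^{-1}=K'C'K'^{-1}$. Thus $G'=\langle A,B,R(KCK^{-1})R^{-1}\rangle$ is precisely the twist-bend of $G$ by $R$, and the whole problem is reduced to showing that $R$ is central.

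To force this I would invoke the twist-bend invariants. With $A$ diagonal the repelling fixed point of the third generator transforms as $\r_{K'C'K'^{-1}}=R\,\r_{KCK^{-1}}$, whereas $\a_A,\r_A,\x_A,\y_A$ and the fixed point $\a_B$ of the common $B$ appear unaltered in the formulae for $\tilde\X_1,\tilde\X_2,\tilde\beta_i$. Consequently the triple $(\tilde\X_1,\tilde\X_2,\tilde\beta_i)$ evaluated on $G'$ is obtained from that of $G$ by the twist $R$, while by construction it is also the value prescribed by the common parameter $(\kappa,\psi)=(\kappa',\psi')$. \lemref{tw1} asserts that a twist-bend parameter is determined by this triple, so the twist carrying $G$ to $G'$ is trivial modulo the centre; that is, $R$ is central. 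Hence $K'C'K'^{-1}=R(KCK^{-1})R^{-1}=KCK^{-1}$ and $G'=G$, proving conjugacy. The dimension tally then reads off directly: $3$ real parameters for $A$ via $tr(A),\sigma(A)$, $12$ each for the placement of $B$ and of $C$ relative to $A$, and $3$ for the twist-bend, with $tr(A),\sigma(A)$ shared, giving $3+12+12+3=30$.

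The step I expect to be the main obstacle is the last one: disentangling the residual $Z(A)$-freedom in positioning the second pants from the twist-bend itself. Both are three-real-dimensional tori, and one must verify that the invariants $\tilde\X_1,\tilde\X_2,\tilde\beta_i$—which couple the two pants only through the single point $\a_B$—genuinely detect all of $Z(A)$ modulo the centre, so that \lemref{tw1} rigidifies the gluing rather than leaving a hidden direction. Keeping the reference group $\langle A,B,C\rangle$ fixed throughout, so that $(\kappa,\psi)$ refers to the same normalisation for $G$ and for $G'$, is the bookkeeping that makes the comparison of twist-bend invariants legitimate.
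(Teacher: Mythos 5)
Your argument is correct and runs on essentially the same rails as the paper's proof: both normalize the two pants via \thmref{mainth} (through the eigenvector-matching of \lemref{lsprp1}), both rely on the convention that the twist-bend is measured against a fixed reference group $\langle A,B,C\rangle$, and both invoke \lemref{tw1}. The difference is where the work is placed. After aligning the pants, the paper simply declares $A=A'$, $B=B'$, $C=C'$ ``and thus $C_1=C_2$'', so that $(\kappa,\psi)=(\kappa',\psi')$ gives $K=K'$ at once; it then reserves \lemref{tw1} entirely for the \emph{converse} direction, using conjugation-invariance of $\tilde\X_1,\tilde\X_2,\tilde\beta_k$ to show that conjugate $(0,4)$ groups have equal twist-bend parameters. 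You instead keep the second pants unnormalized, extract a residual conjugator $R\in Z(A)$ from \thmref{mainth}, note that composing shifts the twist additively (indeed $E(\kappa,\psi)E(\mu,\chi)=E(\kappa+\mu,\psi+\chi)$ in the paper's parametrization), and use \lemref{tw1} to kill $R$. Under the fixed-reference convention this detour is logically redundant --- once $C=C'$ the hypothesis $(\kappa,\psi)=(\kappa',\psi')$ forces $K=K'$ outright --- but it is a genuine clarification of the step the paper waves through: it verifies that $\tilde\X_1,\tilde\X_2,\tilde\beta_i$ detect all of the three-real-dimensional torus $Z(A)$ modulo the centre, so the alignment ambiguity of the second pants cannot hide inside the twist. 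Your identification of this as the main obstacle, and your use of Zariski-density of the non-singular pair to reduce everything to literal equality of the third generator, are both sound; your dimension count $3+12+12+3=30$ matches the paper's $6+3+10+4+4+3=30$.

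The one piece of the paper's proof you do not supply is that converse: if $\langle A,B,KCK^{-1}\rangle$ and $\langle A',B',K'C'K'^{-1}\rangle$ are conjugate (with corresponding decompositions), then $(\kappa,\psi)=(\kappa',\psi')$ --- which is what makes the twist-bend a well-defined coordinate at all, and is needed for the ``uniquely determined'' claim to have content in both directions. Your closing sentence about keeping the reference group fixed gestures at this but does not prove it. Fortunately the computation you already performed closes it immediately: a conjugation matching the decompositions carries the reference configuration $(\a_B,\a_A,\r_A,\x_A,\y_A,\r_C)$ of one group to that of the other, hence preserves the quantities $\tilde\X_1,\tilde\X_2,\tilde\beta_i$, and \lemref{tw1} (whose hypothesis that some $\nu_i(A,C)\neq 0$ holds by non-singularity of $\langle C,A^{-1}\rangle$) then yields equality of the parameters --- exactly the paper's final paragraph. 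With that two-line addition your proof is complete and, on the forward direction, somewhat more scrupulous than the original.
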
 

The number 30 is obtained by the following count: 

$[(3~(\hbox{traces}) \times 2  + (3~ (\hbox{$\sigma$-invariants}) \times 1 )+ (2 ~(\hbox{cross-ratios}) \times 5)+ (2 ~(\hbox{$\alpha$-invariants}) \times 2)+ (2 ~(\hbox{$\beta$-invariants}) \times 2) + ( 1~(\hbox{twist-bend}) \times 3)=30]$.

\begin{proof}
Suppose, $\langle A, B, KCK^{-1} \rangle$ and $\langle A', B', K'C'K'^{-1}\rangle$ are two $(0,4)$ subgroups having the same Fenchel-Nielsen coordinates. Let the following relations hold: 
\begin{equation}\label{t1} tr(A)=tr(A'), ~ tr(B)=tr(B'),~ tr(C)=tr(C'); \end{equation} 
\begin{equation} \label{t2} \sigma(A)=\sigma(A'), ~ \sigma(B)=\sigma(B'), ~ \sigma(C)=\sigma(C'); \end{equation} 
\begin{equation}\label{t3} \hbox{for }t=1,2,3, ~ \X_t(A, B)=\X_t(A', B'), ~ \X_t(A, C)= \X_t(A', C');\end{equation} 
\begin{equation} \label{t41} \alpha_i(A, B)=\alpha_i(A', B'), ~ ~ \alpha_l(A, C)=\alpha_l(A', C'); \end{equation} 
\begin{equation} \label{t42}  \beta_j(A, B)=\beta_j(A', B'), ~ \beta_m(A, C)=\beta_m(A', C');\end{equation} 
\begin{equation} \label{t5} (\kappa, \psi)=(\kappa', \psi').\end{equation} 
Using these relations, it follows from \thmref{mainth} that there exist $C_1$ and $C_2$ in ${\rm SU}(3,1)$ that conjugate $\langle A, B \rangle$ and $\langle C, A^{-1} \rangle$ respectively to $\langle A', B'\rangle$ and $\langle C', A'^{-1} \rangle$.

  Now the twist-bends are defined with respect to the same initial group $\langle A, B, C \rangle$ that we fix at the beginning before  attaching the two $(0,3)$ groups. So,  without loss of generality, we may assume that $A=A', ~ B=B', ~ C=C'$, and thus,  $C_1=C_2$. Now with respect to the same initial  group $\langle A, B, C \rangle$, by \eqnref{t5} it follows that $K=K'$. This implies that $\langle A, B, KCK^{-1} \rangle$ is determined uniquely up to conjugacy. 

Conversely, suppose that $\langle A, B, KCK^{-1} \rangle$ and $\langle A', B', K'C'K'^{-1} \rangle$ are conjugate. Then clearly, \hbox{\eqnref{t1} --\eqnref{t42} } are satisfied. The only thing remains to show is \eqnref{t5}. Now by the invariance of the cross-ratios it follows that
$$\tilde \X_1(\kappa, \psi)=\tilde \X_1(\kappa', \psi'), ~\tilde \X_2(\kappa, \psi)=\tilde \X_2(\kappa', \psi'), ~ \tilde \beta_k(\kappa, \psi)=\tilde \beta_k(\kappa', \psi'),$$
and hence by \lemref{tw1}, $(\kappa, \psi)=(\kappa', \psi')$. 
\end{proof}
\subsection{Closing a handle} We are now interested in obtaining a one-holed torus by attaching two holes of the same pair of pants in the complex hyperbolic $3$-space. The process of attaching these two holes is called \emph{closing a handle}. Geometrically, it corresponds to attach two boundary components of the same pair of pants. To make this work, one of the peripheral elements of the corresponding $(0,3)$ group must be conjugate to the inverse of the other peripheral element. This ensures that they are compatible for the attachment. Suppose the two peripheral elements are $A$ and $BA^{-1} B^{-1}$, then the third element would be $[B, A]=BAB^{-1} A^{-1}$. A $(1,1)$ subgroup of ${\rm SU}(3,1)$ is a group that is generated by the elements $A, ~ B$ and $[A, B]$. From a group theoretic viewpoint, closing a handle is the same as taking the HNN-extension of the $(0, 3)$ group $\langle A, BA^{-1} B^{-1} \rangle$ by adjoining the element $B$ to form a $(1,1)$ group. When we take the HNN-extension, the map $B$ is not 
unique. If $K$ is any element in ${\rm SU}(3,1)$ that commutes with $A$, then $\langle A, BK \rangle$ gives another $(1,1)$ group. Varying $K$ corresponds to a twist-bend coordinate as above. 

 If $A=QE(\lambda, \phi) Q^{-1}$ for $(\lambda, \phi) \in S$, just as before, we define the twist-bend parameter  $(\kappa, \psi)$ by 
$K=QE(\kappa, \psi) Q^{-1}$, and we say, $(\kappa, \psi)$ is oriented consistently with $A$. In this case also $(\kappa, \psi)$ is defined relative to a reference group that we fix at the starting of the attachment. 

\begin{lemma}\label{ch}
Let $\langle A, BA^{-1} B^{-1} \rangle$ be a non-singular $(0, 3)$ group. Let $B$ be a fixed choice of an element in ${\rm SU}(3,1)$ conjugating $A^{-1}$ to $BA^{-1} B^{-1}$. Let $(\kappa, \psi)$ $(\kappa', \psi')$ be twist-bend parameters oriented consistently with $A$. Then $\langle A, BK\rangle$ is conjugate to $\langle A, BK'\rangle$ if and only if $(\kappa, \psi)=(\kappa, \psi')$. 
\end{lemma}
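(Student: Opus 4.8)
The plan is to follow the pattern of \propref{tb1}, reducing the statement to the twist-bend rigidity already established in \lemref{tw1}. The easy direction is immediate: since the reference is fixed once and for all, writing $A = QE(\lambda,\phi)Q^{-1}$ the twist-bend element is $K = QE(\kappa,\psi)Q^{-1}$, which depends only on $(\kappa,\psi)$ and the fixed $Q$. Hence $(\kappa,\psi)=(\kappa',\psi')$ forces $K=K'$, so $BK=BK'$ and the two $(1,1)$ groups literally coincide; in particular they are conjugate.

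For the converse the first step is to isolate the part of the configuration that does not depend on $K$. Because $K$ commutes with $A$, it commutes with $A^{-1}$, so
\[
(BK)\,A^{-1}\,(BK)^{-1} = B\,A^{-1}\,B^{-1} =: W,
\]
independently of $K$. Thus both $\langle A, BK\rangle$ and $\langle A, BK'\rangle$ contain the \emph{same} non-singular $(0,3)$ group $\langle A, W\rangle$, and all of its eigen-data $\a_A,\r_A,\x_A,\y_A,\a_W,\r_W$ is $K$-independent; note also that $\langle A^{-1}, W\rangle = \langle A, W\rangle$ is non-singular, since replacing $A$ by $A^{-1}$ leaves the fixed-point set and the positive eigenvectors unchanged. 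I would then normalize, using \thmref{mainth} and \corref{licor1} applied to $\langle A, W\rangle$, so that $A = E(\lambda,\phi)$ with $a_A=\infty$, $r_A=0$, the eigen-data of $A$ and $W$ are held in standard position for both groups, and $K = E(\kappa,\psi)$, $K' = E(\kappa',\psi')$.

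Next I introduce twist-bend invariants exactly as in the paragraph preceding \lemref{tw1}, letting the single pair of pants play the role of both glued copies: that is, I replace $\ab$ by $\a_W$ and $K(\r_C)$ by $K(\r_W)$, setting
\[
\tilde\X_1 = [\a_W,\a_A,\r_A,K(\r_W)], \quad \tilde\X_2 = [\a_W,\r_A,\a_A,K(\r_W)], \quad \tilde\beta_1 = [K(\r_W),\a_W,\x_A,\a_A],
\]
the surviving $\tilde\beta_i$ being a well-defined nonzero number because $\nu_1(A,W)\neq 0$ or $\nu_2(A,W)\neq 0$ by non-singularity. The content to verify is that these are conjugacy invariants of the $(1,1)$ group and that, with the reference $\langle A, W\rangle$ fixed as above, a conjugacy carrying $\langle A, BK\rangle$ to $\langle A, BK'\rangle$ forces them to agree for $(\kappa,\psi)$ and $(\kappa',\psi')$. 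Invoking \lemref{tw1} with the substitution $B \leftrightarrow W$, $C \leftrightarrow W$ then yields $\kappa=\kappa'$ from $\tilde\X_1/\tilde\X_2$ and $\psi=\psi'$ from $\tilde\beta_i$, which completes the proof.

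The hard part is precisely the HNN (self-gluing) step. In \propref{tb1} the two glued boundaries belong to distinct pairs of pants, so the conjugating element decomposes into pieces acting on each factor; here both boundaries lie on the \emph{same} pants, so the stable letter $BK$ is determined only up to the relation $(BK)A^{-1}(BK)^{-1}=W$ and up to powers of $A$. The main obstacle is therefore to show that $\tilde\X_1,\tilde\X_2,\tilde\beta_i$ are genuinely intrinsic — independent of these choices and preserved under setwise conjugacy of the $(1,1)$ group — so that $K(\r_W)$, equivalently the $K$-dependent fixed-point data of the stable letter $BK$, is read off from the conjugacy class. Once this intrinsicness is secured, the remaining computation is the verbatim one carried out in \lemref{tw1}.
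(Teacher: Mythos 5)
Your forward direction is correct and matches the paper. The converse, however, is not a proof: you reduce everything to the claim that the quantities $\tilde\X_1,\tilde\X_2,\tilde\beta_i$ (with $\r_C$ replaced by $K(\r_W)$, $W=BA^{-1}B^{-1}$) must agree for the two groups whenever $\langle A, BK\rangle$ and $\langle A, BK'\rangle$ are conjugate, and you explicitly defer this ``intrinsicness'' as the main obstacle without resolving it. That deferred step is exactly where all the content lies, and it genuinely fails as an independent verification. Concretely, if $D\in{\rm SU}(3,1)$ realizes a marked conjugacy, $DAD^{-1}=A$ and $D(BK)D^{-1}=BK'$, then since $B$ is the fixed reference one gets $K'=B^{-1}DB\,K\,D^{-1}$. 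You correctly note that $D$ commutes with $A$ and with $W$, hence fixes $a_A$, $r_A$, $a_W=B(a_A)$, $r_W=B(r_A)$ projectively; but then $K'(\r_W)$ equals $B^{-1}DB$ applied to $K(D^{-1}\r_W)$, and the factor $B^{-1}DB$ does not fix $\a_A,\r_A,\a_W$. So equality of your tilde-invariants for $(\kappa,\psi)$ and $(\kappa',\psi')$ does not follow from the conjugacy; to force it you would need $D$ to act trivially on all the relevant eigendata, i.e.\ you would need $D=I$. (This is the structural reason the analogy with \propref{tb1} breaks down: there $K$ enters through $K(\r_C)$, data of the second pants genuinely moved by $K$, whereas here $K$ enters only through the stable letter $BK$, and $(BK)(\r_A)$ is $\r_W$ up to scale, blind to $K$.)

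The paper proves precisely $D=I$, and this short-circuits your whole apparatus: since $D$ commutes with both $A$ and $BA^{-1}B^{-1}$, it fixes the four distinct points $a_A,\ r_A,\ B(a_A),\ B(r_A)$; a nontrivial such $D$ would force these points onto a common chain in its fixed-point set, which is excluded by non-singularity of the $(0,3)$ group $\langle A, BA^{-1}B^{-1}\rangle$. Hence $D=I$, so $BK=BK'$, giving $K=K'$ and $(\kappa,\psi)=(\kappa',\psi')$ directly, with no appeal to \lemref{tw1} or to any cross-ratio computation. So the missing step in your proposal is not a routine check one may postpone: once supplied, it makes your invocation of \lemref{tw1} superfluous, and without it the argument does not close.
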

\begin{proof} 
If $(\kappa, \psi)=(\kappa, \psi')$, then clearly $K=K'$ and hence the groups are equal. 

Conversely, suppose $\langle A, BK \rangle$ is conjugate to $\langle A, BK'\rangle$. The conjugating element $D$ must commutes with $A$. Hence $D(a_A)=a_A, ~ D(r_A)=r_A$.  Since $BA^{-1} B^{-1}$ has been fixed at the beginning, we have
$$BA^{-1} B^{-1}=(BK') A^{-1} (BK')^{-1}=(DBKD^{-1})A^{-1} (DBKD^{-1})^{-1}=D(BA^{-1} B^{-1})D^{-1}.$$
Thus $D$ commutes with $BA^{-1} B^{-1}$ and fixes $a_{BA^{-1} B^{-1}}=B(r_A), ~ r_{BA^{-1} B^{-1}}=B(a_A)$. Since the fixed points are distinct, $D$ is either the identity or,  the fixed points $a_A, ~ r_A, ~B(a_A),~B(r_A)$ belong to the same chain fixed by $D$. But the later is not possible by the non-singularity of the $(0,3)$ group. Thus,  $D$ must be the identity. Thus, $BK'=BK$ and hence, $K=K'$, i.e., $(\kappa, \psi)=(\kappa', \psi')$. 
\end{proof} 
\begin{prop}\label{tw2} 
Let  $\langle A, BK \rangle$ be a $(1,1)$ group obtained from the non-singular $(0,3)$ group $\langle A, BA^{-1} B^{-1} \rangle$ by closing a handle with associated twist-bend parameter $(\kappa, \psi)$. Then $\langle A, BK \rangle$ is determined uniquely up to conjugation by its \emph{Fenchel-Nielsen coordinates} 

\medskip 

 $tr(A),~\sigma(A)$, 
$~\X_{k}(A, BA^{-1} B^{-1}),~k=1,2,3$, one non-zero  $\alpha$-invariant: $\alpha_i(A, BA^{-1} B^{-1})$, one non-zero $\beta$-invariant: $\beta_j(A, BA^{-1} B^{-1})$  and the twist-bend parameter $(\kappa, \psi)$. 

\medskip Thus,  we need 15 real parameters to specify $\langle A, BK \rangle$ up to conjugacy. 
\end{prop}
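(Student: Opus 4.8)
The plan is to mirror the proof of \propref{tb1}, replacing the amalgamation bookkeeping by the HNN-extension bookkeeping that is already isolated in \lemref{ch}. Suppose $\langle A, BK \rangle$ and $\langle A', B'K' \rangle$ are two $(1,1)$ groups, obtained by closing a handle on the non-singular $(0,3)$ groups $\langle A, BA^{-1}B^{-1} \rangle$ and $\langle A', B'A'^{-1}B'^{-1} \rangle$ respectively, and carrying the same Fenchel-Nielsen coordinates listed in the statement. The first thing I would observe is that the second peripheral generator $BA^{-1}B^{-1}$ is conjugate to $A^{-1}$, so by \lemref{lile1} its trace equals $\overline{tr(A)}$ and its $\sigma$-invariant equals $\sigma(A)$ (since the eigenvalues of $A^{-1}$ are the reciprocals of those of $A$, whose product is $1$); the same holds for the primed data. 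Consequently the \emph{full} list of invariants required by \thmref{mainth} for the $(0,3)$ group $\langle A, BA^{-1}B^{-1} \rangle$ — namely $tr(A)$, $tr(BA^{-1}B^{-1})$, $\sigma(A)$, $\sigma(BA^{-1}B^{-1})$, the three cross-ratios $\X_k(A, BA^{-1}B^{-1})$, one non-zero $\alpha$-invariant and one non-zero $\beta$-invariant — is entirely determined by the coordinates we are given, even though only $tr(A)$ and $\sigma(A)$ appear explicitly.

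Next I would invoke \thmref{mainth} directly: since all the invariants it requires agree for the two non-singular $(0,3)$ groups, there is a single element of ${\rm SU}(3,1)$ conjugating $\langle A, BA^{-1}B^{-1} \rangle$ to $\langle A', B'A'^{-1}B'^{-1} \rangle$. Exactly as in the proof of \propref{tb1}, and because the twist-bend is a relative invariant defined with respect to a reference group fixed at the start of the attachment, I may conjugate and assume without loss of generality that the two underlying $(0,3)$ groups coincide; in particular $A = A'$ and $BA^{-1}B^{-1} = B'A'^{-1}B'^{-1}$.

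The crux is then the residual freedom in the adjoined HNN-generator. With the $(0,3)$ group fixed, both $B$ and $B'$ conjugate $A^{-1}$ to the same element $BA^{-1}B^{-1}$, so $B^{-1}B'$ centralizes $A$; by \cite{gcpr} the centralizer of the loxodromic $A$ consists, after the fixed conjugation putting $A$ in the form $E(\lambda,\phi)$, precisely of the elements $E(\kappa,\psi)$, which is exactly the family recorded by the twist-bend parameter. Since by hypothesis $(\kappa,\psi)=(\kappa',\psi')$, \lemref{ch} applies verbatim and yields that $\langle A, BK \rangle$ and $\langle A, BK' \rangle$ are conjugate (indeed $K=K'$ once the reference group is fixed). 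For the converse, conjugacy of the two $(1,1)$ groups immediately forces agreement of all the trace, $\sigma$-, cross-ratio, $\alpha$- and $\beta$-invariants, and then $(\kappa,\psi)=(\kappa',\psi')$ follows again from \lemref{ch}. The dimension count $2+1+5+2+2+3=15$ records the real degrees of freedom. The step I expect to require the most care is the reduction ``assume without loss of generality that the underlying $(0,3)$ groups coincide'': one must verify that conjugating the $(0,3)$ group does not secretly displace the reference group relative to which $(\kappa,\psi)$ is measured, so that the comparison of twist-bends through \lemref{ch} stays legitimate. This is precisely where the relative nature of the twist-bend must be used honestly, and it is the only genuinely delicate point beyond the routine application of \thmref{mainth} and \lemref{ch}.
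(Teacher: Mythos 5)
Your proposal is correct and follows essentially the same route as the paper's own proof: both recover the unstated invariants of the peripheral element $BA^{-1}B^{-1}$ from $tr(A)$ and $\sigma(A)$ (using that it is conjugate to $A^{-1}$, so $tr(BA^{-1}B^{-1})=\overline{tr(A)}$ by \lemref{lile1}), apply \thmref{mainth} to conjugate the underlying $(0,3)$ groups and normalize so that they coincide, and then settle the twist-bend in both directions via \lemref{ch}. Your additional remarks on the centralizer of $A$ and on honestly tracking the reference group match how the paper handles this, namely by fixing $B$ as a chosen conjugating element relative to which $(\kappa,\psi)$ is measured.
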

\begin{proof}
Suppose that $\langle A, BK \rangle$ and $\langle A, B'K' \rangle$ are two $(1,1)$ groups with the same Fenchel-Nielsen coordinates. In particular $tr(A)=tr(A')$ and hence
$$tr(BA^{-1} B^{-1})=\overline{ tr(A)}=\overline{tr(A')}=tr(B'A'^{-1} B^{-1}).$$
Further using 
$X_k(A, BA^{-1} B^{-1})=\X_k(A', B'A'^{-1} B'^{-1}) \hbox{ for } k=1,2,3,$ 

\noindent $\alpha_i(A, BA^{-1} B^{-1})=\alpha_i(A', B'A'^{-1} B'^{-1}) \hbox{ and } \beta_j (A, BA^{-1} B^{-1})=\beta_j(A', B'A'^{-1} B'^{-1})$, 
we see by \thmref{mainth} that the $(0, 3)$ groups $\langle A, BA^{-1} B^{-1} \rangle$ and $\langle A', B'A'^{-1} B'^{-1} \rangle$ are conjugate. Thus we can assume $A=A', ~  BA^{-1} B^{-1}=B'A'^{-1} B'^{-1}$. Now using the above lemma,  we see that $(\kappa, \psi)=(\kappa', \psi')$. Hence $K=K'$. Thus, the group $\langle A, BK\rangle$ is determined uniquely up to conjugation. 

Conversely, suppose $\langle A, BK \rangle$ and $\langle A', B'K' \rangle$ are conjugate. Hence, it is clear that all the Fenchel-Nielsen coordinates but the twist-bend parameters are the same. Conjugating, if necessary, we assume $A=A'$.  Since $B$ is a fixed choice of the conjugation element with reference to which $(\kappa, \psi)$ and $(\kappa', \psi')$ are defined, we may also assume $B=B'$. Now, using \lemref{ch},  we see that $K=K'$, i.e., $(\kappa, \psi)=(\kappa', \psi')$. 
\end{proof}
\section{Proof of \thmref{mth2}} \label{mnth2} 
We have seen in \thmref{mainth} that each $(0, 3)$ group $\langle A, B \rangle$ is determined up to conjugacy by $15$ real parameters. Each pair of pants in the pants-decomposition (complement of $\mathcal C$) corresponds to a $(0, 3)$ group. While we attach two pairs of pants, we attach two $(0, 3)$ groups subject to the compatibility condition that one peripheral element in a group is conjugate to the inverse of a peripheral element in the other group. Thus,  we get a $(0, 4)$ group that is specified up to conjugacy by the $30$ real parameters described in  \propref{tb1}. Continuing this way, when we attach  $2g-2$ of our $(0, 3)$ groups, we need a total $15(2g-2)=30g-30$ real parameters to specify the resulting surface with $2g$ handles. These handles correspond to the $g$ curves that correspond to the two boundary components of the same three-holed sphere. Now,  there are $g$ complex constraints which are  imposed to close these handles: one of the peripheral elements of each of these $(0,3)$ groups  must be conjugate to the inverse of the other peripheral element. Note that,  to each peripheral element there are $3$ natural real parameters: the trace and the $\sigma$-invariant. So, the number of real parameters now reduced to $30g-30-3g=27g-30$. But there are $g$ twist-bend parameters $(\kappa_i, \psi_i)$, one for each handle,  and each contributes $3$ real parameters. Thus, we need a total of $27g-30 + 3g=30g-30$ real parameters to specify $\rho$ up to conjugacy.

If two representations have the same coordinates, then the coordinates of the $(0,3)$ groups are the same, so they are conjugate. Further it follows from \propref{tb1} and \propref{tw2} that the $(0,4)$ groups and the $(1,1)$ groups are also determined uniquely up to conjugacy while attaching the $(0,3)$ groups. Hence, the representations having the same parameters are conjugate. Conversely, if two representations are conjugate, then clearly they have the same coordinates. 

This proves the theorem.

\subsection{Remarks}
In a recent work, Gongopadhyay and Lawton \cite{gl} have given a collection of 22 trace coordinates subject to 5 real relations that determines any representation $\rho$ in ${\rm  Hom}({\rm F}_2, {\rm SU}(3,1))/{\rm SU}(3,1)$ having a closed conjugation orbit.  In particular, this coordinate system determines any Zariski-dense pair in ${\rm SU}(3,1)$. This gives 39 real coordinates to determine such representations.  At the same time, it has been shown that the real dimension of the \hbox{smallest}  possible system of such real parameters is 30.  It would be interesting to reduce the 39 dimensional coordinate system to a 30 dimensional system. To achieve this, it is required to obtain relations involving the above trace coordinates. Currently, except the above mentioned five real relatons, no other relation is known.

In view of the above,  \thmref{mainth} of this paper provides a smaller number of coordinates that is enough to determine generic pairs of loxodromic elements in ${\rm SU}(3,1)$.  By the results in \cite{gl}, it is clear that a 15 real dimensional coordinate system can not determine pairs in ${\rm SU}(3,1)$. However, it is interesting to obtain classes of pairs  that may be determined by lesser dimensional parameters.  It would also be interesting to generalize this work for $n>3$. In higher dimensions, the approach in this paper may be difficult to generalize because of the obstruction to obtain sufficient number of conjugacy invariants. A starting point is the classification of pairs of elements in ${\rm SU}(n, 1)$ up to conjugacy. Following classical invariant theory, one approach is to obtain this classification using trace invariants, eg. the coefficients of the characteristic polynomials. This problem is of considerable difficluty even in the case of ${\rm SL}(n, \C)$, where  a complete set of minimal number of trace parameters along with relations between them,  is still unknown except for a few lower values of $n$, see for eg. \cite{ds, do}. 

\def\cdprime{$''$} \def\Dbar{\leavevmode\lower.6ex\hbox to 0pt{\hskip-.23ex
  \accent"16\hss}D}

\end{document}